\theoremstyle{plain}
\newtheorem{theorem}{Theorem}[section]
\newtheorem{proposition}[theorem]{Proposition}
\newtheorem{lemma}[theorem]{Lemma}
\newtheorem{remark}[theorem]{Remark}
\newtheorem{definition}[theorem]{Definition}
\def\endproof{\hfill $\Box$ \par \vskip3mm}
\newcommand{\R}{\mathbb{R}}
\newcommand{\rn}{\mathbb{R}^{N}}
\newcommand{\hn}{\mathbb{H}^{N}}
\newcommand{\ds}{\: {\rm d}s}
\newcommand{\gradg}{\nabla_g}
\newcommand{\dvg}{{\rm d}V_g}
\newcommand{\dr}{\: {\rm d}r}
\numberwithin{equation}{section} \allowdisplaybreaks
\begin{document}
	
	\title[Semilinear equations on Riemannian models]{Classification of radial solutions to\\ $-\Delta_g u=e^u$ on Riemannian models}

	\author[Elvise BERCHIO]{Elvise BERCHIO}
\address{\hbox{\parbox{5.7in}{\medskip\noindent{Dipartimento di Scienze Matematiche, \\
Politecnico di Torino,\\
       Corso Duca degli Abruzzi 24, 10129 Torino, Italy. \\[3pt]
       \em{E-mail address: }{\tt elvise.berchio@polito.it}}}}}

       \author[Alberto FERRERO]{Alberto FERRERO}
\address{\hbox{\parbox{5.7in}{\medskip\noindent{ Dipartimento di Scienze e Innovazione Tecnologica, \\
Universit\'a del Piemonte Orientale,\\       Viale Teresa Michel 11, Alessandria, 15121, Italy. \\[3pt]
 \em{E-mail address: }{\tt alberto.ferrero@uniupo.it }}}}}
	
	\author[Debdip Ganguly]{Debdip Ganguly}
	\address{\hbox{\parbox{5.7in}{\medskip\noindent{Department of Mathematics,\\
					Indian Institute of Technology Delhi,\\
					IIT Campus, Hauz Khas, Delhi,\\
					New Delhi 110016, India. \\[3pt]
					\em{E-mail address: }{\tt
						debdipmath@gmail.com}}}}}
					
	\author[Prasun Roychowdhury]{Prasun Roychowdhury}
	\address{\hbox{\parbox{5.7in}{\medskip\noindent{ Mathematics Division,\\
					National Center for Theoretical Sciences,\\
					NTU, Cosmology Building, No.~1, Sec. 4\\
					 Roosevelt RD, Taipei City 106, Taiwan. \\[3pt]
					\em{E-mail address: }{\tt prasunroychowdhury1994@gmail.com}}}}}

\subjclass[2010]{58J60, 35B35, 35J61, 35R01}
	\keywords{Riemannian models, hyperbolic space, radial solutions, stability of solutions, asymptotics of solutions}
	\date{\today}
\begin{abstract}

We provide a complete classification with respect to asymptotic behaviour, stability and intersections properties of radial smooth solutions to the equation $-\Delta_g u=e^u$ on Riemannian model manifolds $(M,g)$ in dimension $N\ge 2$. Our assumptions include Riemannian manifolds with sectional curvatures bounded or unbounded from below. Intersection and stability properties of radial solutions are influenced by the dimension $N$ in the sense that two different kinds of behaviour occur when $2\le N\le 9$ or $N\ge 10$, respectively. The crucial role of these dimensions in classifying solutions is well-known in Euclidean space; here the analysis highlights new properties of solutions that cannot be observed in the flat case.

\end{abstract}

	\maketitle

\section{Introduction}

Let $N \ge 2$ and let $M$ be an $N$-dimensional \textit{Riemannian model} $(M,g)$, namely a manifold admitting a pole $o$
and whose metric is given, in polar or spherical coordinates around $o$, by
\begin{equation}\label{metric}
	g = {\rm d}r^2 + (\psi(r))^2 \, {\rm d}\omega^2, \qquad  r>0, \, \omega \in\mathbb{S}^{N-1} \, ,
\end{equation}
for some function $\psi$ satisfying suitable assumptions. Here ${\rm d}\omega^2$ denotes the canonical metric on the unit sphere $\mathbb{S}^{N-1}$ and $r$ is by construction the distance between a point of spherical coordinates $(r,\omega)$ and the pole $o$.

In this article, we are concerned with radial smooth solutions of
\begin{align} \label{main_eq}
       -\Delta_{g}u \, =\, e^u \quad	\text{ in } M \, ,
\end{align}
namely smooth solutions of \eqref{main_eq} depending only on the geodesic distance from the pole. Here $\Delta_g$ denotes the Laplace-Beltrami operator in $(M,g)$.

%
%
%
%
The great interest for equation \eqref{main_eq} is motivated by its applications to geometry and physics see e.g., \cite{cheng-ni,edi,gelfand}. For instance, in fluid dynamics, a variant of \eqref{main_eq} appears when studying ``Stuart vortices" on a unit sphere $\mathbb S^2$ in a stationary regime, i.e.
$$
   -\Delta_{\mathbb S^2} \, \psi=b +  c \, e^{d\psi}  \qquad \text{on } \mathbb S^2
$$ 
where $\psi$ represents the stream function, $b,c,d$ are real constants and the right hand side of the equation represents a vorticity $\omega$ depending exponentially on the streamfunction $\psi$, see \cite{crowdy}. Related semilinear equations on rotating spheres also arise in the study of stratospheric planetary stationary flows, see \cite{constantin} and the discussion therein. When looking for stationary stream functions, the authors of \cite{constantin} are led to consider the semilinear elliptic equation
$$
  \Delta_{\mathbb S^2} \, \psi=F(\psi)-2\omega \sin \theta \qquad \text{on } \mathbb S^2 
$$
where now $\omega$ represents the rotation speed around the polar axis, $F$ is a smooth function and $\theta$ is the angle of latitude. 

The behaviour of solutions to equation \eqref{main_eq}, when posed in the Euclidean space, has been fully understood from different points of view: asymptotic behaviour, stability and intersections properties. In the seminal paper \cite{joseph-73} existence and asymptotic behaviour of radial solutions were established by means of a dynamical system analysis based on the so-called Emden transformation of the phase plane. We refer to \cite[Theorem 1.1]{tello-06} for the intersection properties of solutions, while the classification of solutions with respect to stability is given in \cite{dancer,farina2}. See also \cite{DUFAneq} and references therein for more recent results in the case of general nonlinearities. Finally, related results in the weighted case: $-\Delta u=k(x)e^u$ in $\R^n$ can be found in \cite{bae-18,cheng-lin-87,cheng-lin-92}, extensions to the higher order case are instead given in \cite{BFFG} and in \cite{faFE}.


As already pointed out in \cite{joseph-73}, equation \eqref{main_eq} can be regarded as the limit case as $p \rightarrow + \infty$ of the Lane-Emden-Fowler equation: $-\Delta_g u=|u|^{p-1}u$ in $M$ ($p>1$). In the last ten years, there has been an intense study of this equation in non-euclidean frameworks including the hyperbolic space and more general Riemannian models, see \cite{bandle-12,elvise-14,bonforte-13,GS,hasegawa,muratori,sandeep-mancini} and references therein. In these papers existence, multiplicity, asymptotic and stability results were provided. The analysis settled on general manifolds highlights deep relationships between the qualitative behaviour of solutions and intrinsic properties of the manifold itself and, sometimes, it reveals a number of unexpected phenomena, if compared with the euclidean case, see e.g, the introductions of \cite{elvise-14} and \cite{bonforte-13}. Another reason for the interest in this kind of research is the fact that some classical tools in the Euclidean case do not work in non-euclidean frameworks, therefore the analysis requires new ideas and alternative approaches that could be useful also in other contexts. As a matter of example, an analogous of the above-mentioned Emden transformation seems not known in non-euclidean settings, therefore different arguments, such as the exploitation of ad hoc Lyapunov functionals, fine asymptotic analysis and blow-up methods, must be exploited, see e.g., Sections  \ref{3}-\ref{5} below.

Coming back to \eqref{main_eq}, its investigation in non-euclidean settings turns out to be the natural subsequent step in order to complete the scenario of results for this equation; this motivates the present paper. More precisely, we provide a complete classification with respect to asymptotic behaviour, stability and intersections properties of radial smooth solutions to \eqref{main_eq} for manifolds $M$ having strictly negative sectional curvatures, possibly unbounded, see assumptions \eqref{A1}-\eqref{A3} in Section \ref{2}. It is worth mentioning that, even if in our proofs we take advantage of some arguments already employed in the study of the Lane-Emden-Fowler equation on Riemannian models, the fact of dealing with an exponential nonlinearity brought a number of considerable technical difficulties related, for instance, to the different sign and decay behaviour of solutions in the two cases. Moreover, our analysis made it possible to highlight a number of properties of solutions that cannot be observed in the flat case, see e.g., Remarks \ref{R1}, \ref{R2} and \ref{R3} in Section \ref{2}.

\medspace

\medspace

The paper is organised as follows. In Section \ref{2} we give the precise formulation of the problem and we state our main results about continuation and asymptotic behaviour of solutions (Proposition \ref{first_facts} and Theorem \ref{final}), stability properties (Theorems \ref{stability statement} and \ref{t:stability} ), and intersection properties (Theorem \ref{t:stab-inter} and Theorem \ref{intersection}); the remaining sections of the paper are devoted to the proofs. More precisely, in Section \ref{3} we prove Proposition \ref{first_facts} while Section \ref{4} is devoted to the proof of Theorem \ref{final}. Section \ref{5} contains a number of technical lemmas that will be exploited to prove Theorems \ref{t:stab-inter}, \ref{stability statement}, \ref{t:stability} and \ref{intersection}. At last, for the sake of the reader, in the Appendix, we briefly recall some well-known facts in the Euclidean case that highlight the role of the dimension $N$ in the stability and intersection properties analysis.


\section{Statement of the problem and main results}\label{2}

\subsection{Notations}
	Let $\psi$ be the function introduced in \eqref{metric}. We assume that $\psi$ satisfies
 \begin{align}\label{A1}
	\tag{A1}
   \text{$\psi\in C^2([0,\infty))$, \ $\psi>0$ in $(0,+\infty)$, \ $\psi(0)=\psi^{\prime\prime}(0)=0$ \ and \ $\psi^\prime(0)=1$;}
\end{align}
and
\begin{align}\label{A2}
	\tag{A2}
    \text{$\psi^\prime(r)>0$ \quad for any $r>0$ \,.}
\end{align}

We recall that the Riemannian model associated with the choice $\psi(r)=\sinh r$ is a well-known representation of the hyperbolic space $\hn$, see e.g., \cite{Gr} and the references therein, while the Euclidean space $\rn$ corresponds to $\psi(r)= r$.

The following list summarizes some notations we shall use throughout this paper.

\begin{itemize}
\item[-] For any $P\in M$ we denote by $T_P M$ the tangent space to $M$ at the point $P$.
\item[-] For any $P\in M$ and $U_1,U_2\in T_P M$ we denote by $\langle U_1,U_2\rangle_g$ the scalar product on $T_P M$ associated with the metric $g$.
\item[-] For any $P\in M$ and $U\in T_P M$ we denote by $|U|_g:=\sqrt{\langle U,U\rangle_g}$ the norm of the vector $U$.
\item[-] ${\rm d}V_g$ denotes the volume measure in $(M,g)$.
\item[-] $\nabla_g$ denotes the Riemannian gradient in $(M,g)$ and in spherical coordinates it is given by
$\nabla_g u(r,\omega)=\frac{\partial u}{\partial r}(r,\omega) \, \frac{\partial}{\partial r}+\frac{r}{(\psi(r))^2}  \, \nabla_\omega u(r,\omega)$ for any $u\in C^1(M)$, where we denoted by $\nabla_\omega$ the Riemannian gradient in the unit sphere $\mathbb S^{N-1}$.
\item[-] $\Delta_g$ denotes the Laplace-Beltrami operator in $(M,g)$ and in spherical coordinates it is given by
$\Delta_g u(r,\omega)=\frac{\partial^2 u}{\partial r^2}(r,\omega)+(N-1)\frac{\partial u}{\partial r}(r,\omega)+\frac{1}{(\psi(r))^2} \, \Delta_{\omega} u(r,\omega)$ for any $u\in C^2(M)$, where we denoted by $\Delta_\omega$ the Laplace-Beltrami operator in the unit sphere $\mathbb S^{N-1}$.
\item[-] $C^\infty_c(M)$ denotes the space of $C^\infty(M)$ functions compactly supported in $M$.
\end{itemize}

\bigskip

From the above notations, we deduce that if $u\in C^2(M)$ is a radial function, then
\begin{align}\label{relation_model}	
    \Delta_{g}u(r)=u^{\prime\prime}(r)+(N-1)\frac{\psi^\prime(r)}{\psi(r)}\, u^\prime(r)=\frac{1}{(\psi(r))^{N-1}}[(\psi(r))^{N-1}u^\prime(r)]^\prime.
\end{align}

Since our aim is to study \emph{smooth} radial solutions to \eqref{main_eq}, for any $\alpha \in \R$, we focus our attention on the following initial value problem
\begin{equation}\label{regular_ode_model}
	\begin{dcases}
		-u^{\prime\prime}(r)-(N-1)\frac{\psi^\prime(r)}{\psi(r)}u^\prime(r)=e^{u(r)} & (r>0) \\
		u(0)=\alpha  & \\
		u^\prime(0)=0. & \\
	\end{dcases}
\end{equation}
The existence and uniqueness of a local solution $u(r)$ to \eqref{regular_ode_model} in $0 \leq r <  R$ (here $R$ denotes the maximal interval of existence) follows by arguing as in Proposition 1 in the Appendix of \cite{serrin}. A classical argument allows to prove that actually $R=+\infty$, thus showing that the solution $u=u(r)$ to \eqref{regular_ode_model} is globally define in $[0,+\infty)$:

\begin{proposition}\label{first_facts}
	Let $N\geq 2$. Suppose that $\psi$ satisfies
assumptions \eqref{A1}-\eqref{A2}. For any $\alpha\in \R$ the local solution to \eqref{regular_ode_model} may be continued to the whole interval $[0,+\infty)$. Moreover, the functions $r\mapsto u^\prime(r)$ and $r\mapsto e^{u(r)}$ are bounded in $[0,+\infty)$, $u'(r)<0$ for any $r>0$ and in particular $u$ is decreasing in $[0,+\infty)$.
\end{proposition}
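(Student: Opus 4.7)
My plan is to first derive the sign $u'<0$ from the divergence form of the equation, then use it together with an energy estimate to obtain uniform a priori bounds on $u'$ and $e^u$ on the maximal existence interval $[0,R)$, and finally invoke those bounds to exclude finite-time blow-up.

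Using the expression \eqref{relation_model}, the ODE in \eqref{regular_ode_model} rewrites as $[(\psi(r))^{N-1} u'(r)]' = -(\psi(r))^{N-1} e^{u(r)}$. Integrating from $0$ to $r\in(0,R)$ and exploiting $u'(0)=0$ together with $\psi(0)=0$ gives
\[
(\psi(r))^{N-1} u'(r) = -\int_0^r (\psi(s))^{N-1} e^{u(s)}\ds,
\]
which by \eqref{A1}--\eqref{A2} is strictly negative for every $r>0$. Hence $u'(r)<0$ and $u$ is strictly decreasing on $(0,R)$; in particular $u(r)\le \alpha$ and $e^{u(r)}\le e^\alpha$ throughout.

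Next I would test the equation against $u'$: setting $E(r) := \tfrac12 (u'(r))^2 + e^{u(r)}$, a direct computation based on \eqref{regular_ode_model} yields the Lyapunov-type identity
\[
E'(r) = -(N-1)\,\frac{\psi'(r)}{\psi(r)}\,(u'(r))^2 \le 0 \qquad (r>0),
\]
where non-positivity follows from \eqref{A2}. Since $E(0)=e^\alpha$, one deduces $E(r)\le e^\alpha$ on $[0,R)$, and hence the uniform bounds $|u'(r)|\le \sqrt{2e^\alpha}$ and $e^{u(r)}\le e^\alpha$ for all $r\in[0,R)$ announced in the statement.

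These a priori bounds rule out blow-up: since $u'$ is bounded, $u$ is Lipschitz on $[0,R)$; so if $R<\infty$ the limits $u(R^-)$, $u'(R^-)$ and $e^{u(R^-)}$ all exist and are finite, and reapplying the local existence theorem (Proposition 1 in the Appendix of \cite{serrin}) at $r=R$ would contradict maximality. Therefore $R=+\infty$. I do not expect a truly hard step here; the only mildly delicate point is the justification of the boundary evaluation at $r=0$ in the integrations above, but this is harmless because $\psi(r)\sim r$ and $\psi'(r)\to 1$ as $r\to 0^+$, which together with $u'(0)=0$ make all the relevant expressions integrable near the origin.
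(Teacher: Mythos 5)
Your proposal is correct and follows essentially the same route as the paper: the sign of $u'$ from integrating the divergence form $[(\psi)^{N-1}u']'=-(\psi)^{N-1}e^u$, the Lyapunov functional $\tfrac12(u')^2+e^u$ for the a priori bounds, and the continuation argument excluding finite $R$. The only cosmetic difference is the order of the first two steps, which is immaterial.
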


\subsection{Asymptotic behaviour}
In order to study the asymptotic behaviour of global solutions to \eqref{regular_ode_model} we require the additional condition:
\begin{align}\label{A3}
	\tag{A3}
	\text{$\lim_{r\rightarrow +\infty}\frac{\psi^\prime(r)}{\psi(r)}=:\Lambda \in (0,\infty].$}
	\end{align}

Clearly, the hyperbolic space
satisfies condition \eqref{A3} and Riemannian models which are
asymptotically hyperbolic satisfy it as well. Furthermore, such a condition allows for \it unbounded \rm
negative sectional curvatures: a typical example in which this can
hold corresponds to the choice $\psi(r)=e^{r^a}$ for a given $a>1$
and $r$ large, a case for which (see \cite[Section 1.1]{elvise-14}) sectional curvatures in the radial direction diverge like $-a^2r^{2(a-1)}$ as $r\to+\infty$. In addition, we remark that under assumptions \eqref{A1}-\eqref{A3}, the $L^2$ spectrum of $-\Delta_g$ is
bounded away from zero whereas if $\lim_{r\rightarrow +\infty}\frac{\psi^\prime(r)}{\psi(r)}=0$ then there is no gap in the
$L^2$ spectrum of $-\Delta_g$, see e.g., \cite[Lemma 4.1]{elvise-14}. Moreover, it can be proved that if the radial
sectional curvature goes to zero as $r\to+\infty$ then necessarily $\lim_{r\rightarrow +\infty}\frac{\psi^\prime(r)}{\psi(r)}=0$, see again \cite[Lemma 4.1]{elvise-14}, therefore no spectral gap is present and the expected
picture is of Euclidean type.

In the following statement we show that the asymptotic behaviour of solutions of \eqref{regular_ode_model} is related to the behaviour at infinity of the ratio $\frac{\psi^\prime(r)}{\psi(r)}$ and hence, for what remarked above, to the curvatures of the manifold:

\begin{theorem}\label{final} Let $N\geq 2$. Suppose that $\psi$ satisfies
assumptions \eqref{A1}-\eqref{A3}. Finally, in the
case $\Lambda=+\infty$ we also assume that
 \begin{align}\label{A6}
	\tag{A4}	\left[\log\left(\frac{\psi^\prime(r)}{\psi(r)}\right)\right]^\prime = O(1) \quad \text{as } r\rightarrow +\infty\,.
\end{align}
 Let $u$ be a solution to \eqref{regular_ode_model}. Then two cases may occur:
\begin{itemize}
\item[$(i)$] if $\frac{\psi}{\psi'}\in L^1(0,\infty)$, then
$$
\lim_{r\rightarrow +\infty} u(r)\in (-\infty, \alpha) \, ;
$$
\item[$(ii)$] if $\frac{\psi}{\psi'}\not\in L^1(0,\infty)$, then $u$ goes to $-\infty$ with the following rate:
\begin{align*}
		\lim_{r\rightarrow +\infty}\frac{u(r)}{\log\big(\int_{0}^{r}\frac{\psi(s)}{\psi^\prime(s)}{\rm d}s\big)}=-1 \, ;
	\end{align*}
in particular, when $\Lambda\in (0,+\infty)$ we have
\begin{align*}
		\lim_{r\rightarrow +\infty}\frac{u(r)}{\log r}=-1.
	\end{align*}
\end{itemize}
\end{theorem}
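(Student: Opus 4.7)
The plan is to extract both parts of the statement from a single integral representation obtained from the divergence form of the equation. Writing the ODE as $[\psi^{N-1}u']' = -\psi^{N-1}e^u$ and using $u'(0)=0$, one integrates once to $-u'(r) = I(r)/\psi(r)^{N-1}$ with $I(r) := \int_0^r \psi(s)^{N-1} e^{u(s)}\,ds$; a second integration with Fubini produces
\[ \alpha - u(r) \;=\; \int_0^r \frac{1}{\psi(t)^{N-1}} \int_0^t \psi(s)^{N-1} e^{u(s)}\,ds\,dt. \]
Since $u$ is strictly decreasing (Proposition \ref{first_facts}), $e^{u(r)} \le e^{u(s)} \le e^\alpha$ for $0 \le s \le t \le r$, which sandwiches the left-hand side by
\[ e^{u(r)} \int_0^r B(t)\,dt \;\le\; \alpha - u(r) \;\le\; e^\alpha \int_0^r B(t)\,dt, \qquad B(t) := \frac{1}{\psi(t)^{N-1}}\int_0^t \psi(s)^{N-1} ds. \]

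The pivotal auxiliary fact is the asymptotic $B(r) \sim \psi(r)/[(N-1)\psi'(r)]$ as $r \to +\infty$. When $\Lambda \in (0,+\infty)$ this follows from a direct application of L'Hopital. When $\Lambda = +\infty$, assumption \eqref{A6} gives $\psi''/\psi' - \psi'/\psi = O(1)$, hence $\psi'' \sim (\psi')^2/\psi$; feeding this into L'Hopital applied to $\psi'(r)A(r)/\psi(r)^N$ (with $A(r) := \int_0^r \psi^{N-1}$) reduces to a fixed-point relation whose unique solution is $1/(N-1)$, giving the claimed equivalence. Since $\psi/\psi' \in L^1(0,\infty)$ forces $\psi'/\psi \to +\infty$, this provides the sharp dichotomy $B \in L^1(0,\infty) \Leftrightarrow \psi/\psi' \in L^1(0,\infty)$. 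Case $(i)$ is then immediate from the upper bound, and strict monotonicity of $u$ gives $u(+\infty) < \alpha$. In case $(ii)$, if $u$ stayed above some finite $L$ the lower bound would yield $e^L \int_0^r B \le \alpha-L$, contradicting $B \notin L^1$; hence $u(r) \to -\infty$.

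For the rate in case $(ii)$, the target is the asymptotic balance $-u'(r) e^{-u(r)} \sim \psi(r)/[(N-1)\psi'(r)]$, since integrating yields $e^{-u(r)} \sim (N-1)^{-1} \int_0^r \psi(s)/\psi'(s)\,ds$ and taking logarithms then produces the stated limit because the right-hand side diverges. I would first show $-u'(r) \to 0$ by splitting $I(r) = \int_0^R + \int_R^r$ with $e^u < \varepsilon$ on $[R,\infty)$ and invoking boundedness of $B$. Setting $W := -u' e^{-u}$, the ODE gives $W' + [(N-1)h + u']\,W = 1$ with $h := \psi'/\psi$; multiplying by the integrating factor $\psi^{N-1}e^u$ and using $\psi(0)=0$ produces the identity
\[ W(r) \;=\; B(r) + \frac{1}{\psi(r)^{N-1}} \int_0^r \psi(s)^{N-1}(u'(s))^2 e^{-u(s)}\,ds, \]
which immediately gives $W \ge B$. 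A bootstrap exploiting this lower bound on $e^{-u}$ together with $-u' \to 0$ and the regular-growth consequences of \eqref{A6} then shows the corrector is $o(B(r))$, so $W \sim B$. The special case $\Lambda \in (0,\infty)$ follows since $\psi/\psi' \to 1/\Lambda$ gives $\log \int_0^r \psi/\psi' = \log r + O(1)$.

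The main obstacle is this final corrector estimate: the integrand $\psi^{N-1}(u')^2 e^{-u}$ contains the blowing-up factor $e^{-u}$, so bounds relying only on monotonicity of $u$ are too crude and the lower bound $W \ge B$ must be iterated to self-improve. Assumption \eqref{A6} enters precisely to ensure that $\psi$ and $\psi'$ grow regularly enough at infinity for the L'Hopital arguments and integration steps to produce genuine asymptotic equivalences rather than mere inequalities.
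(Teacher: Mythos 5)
Your dichotomy argument (part $(i)$ and the divergence $u\to-\infty$ in part $(ii)$) is sound and in fact more direct than the paper's route: the double-integral representation, the sandwich by $B(t)=\psi(t)^{1-N}\int_0^t\psi^{N-1}$, and the equivalence $B\in L^1\Leftrightarrow\psi/\psi'\in L^1$ all check out, and the identity $\psi^{N-1}W=\int_0^r\psi^{N-1}+\int_0^r\psi^{N-1}(u')^2e^{-u}$ for $W=-u'e^{-u}$ is correct (though the integrating factor that produces it is $\psi^{N-1}$ applied to $W'+(N-1)\tfrac{\psi'}{\psi}W=1+(u')^2e^{-u}$, not $\psi^{N-1}e^u$, which only reproduces the first integral). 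Two smaller points need patching: your ``fixed-point relation'' for $\lim\psi' A/\psi^N$ when $\Lambda=+\infty$ presupposes that the limit exists (and that it is finite), which is exactly what l'H\^opital is supposed to deliver; one should instead apply l'H\^opital to $A(r)$ against $\psi^N/\psi'$, whose derivative is $\psi^{N-1}\bigl[N-\psi\psi''/(\psi')^2\bigr]\sim(N-1)\psi^{N-1}$ by \eqref{A6}.

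The genuine gap is the final step of part $(ii)$: the claim that the corrector $\psi^{1-N}\int_0^r\psi^{N-1}(u')^2e^{-u}$ is $o(B(r))$, equivalently that $e^{-u(r)}\int_0^r\psi^{N-1}e^{u}\sim\int_0^r\psi^{N-1}$. You name this as ``the main obstacle'' and propose ``a bootstrap exploiting the lower bound $W\ge B$ \ldots together with $-u'\to0$,'' but no such bootstrap is carried out, and it does not close by the obvious iteration: writing the corrector as $\psi^{1-N}\int_0^r\psi^{N-1}(-u')W$ and using $-u'\le\varepsilon$ on $[R,r]$ only helps if one already has an a priori upper bound $W\le CB$, which is precisely of the same difficulty as the statement being proved (it amounts to showing that the mass of $\psi^{N-1}e^{u(s)-u(r)}$ does not spread away from $s=r$ even though $e^{u(s)-u(r)}\to+\infty$ pointwise). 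Making this work requires a quantitative Laplace-type estimate using $\inf\psi'/\psi>0$ and, when $\Lambda=+\infty$, the regularity of $\psi'/\psi$ encoded in \eqref{A6}; none of this is supplied. This missing estimate is the technical heart of the theorem, and it is exactly what the paper's Lemmas \ref{limitexist} and \ref{limitinfty} accomplish by an entirely different device: showing that $\chi=\frac{e^u}{u'}\frac{\psi}{\psi'}$ admits a limit by evaluating the equation at the interior extrema of $\chi$, and then identifying that limit as $-(N-1)$. As written, your proposal proves $\limsup$-type one-sided information (namely $W\ge B$, hence $\liminf_{r\to\infty}u(r)/\log\bigl(\int_0^r\psi/\psi'\bigr)\le-1$) but not the matching upper bound, so the limit $-1$ is not established.
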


\medskip

\begin{remark}\label{R1}
{\rm
As a prototype of function $\psi$ satisfying the assumptions of Theorem \ref{final} when $\Lambda= + \infty$ consider the function $\psi(r)=re^{r^{2\gamma}}$ with $\gamma > \frac{1}{2}$. If $\frac{1}{2}< \gamma \leq 1$ case $(ii)$ occurs while if $\gamma>1$ assumption $(i)$ holds. Clearly, if $M=\hn$ we have that $\Lambda=1$ and case $(ii)$ occurs. From \cite{joseph-73} we recall that in the flat case solutions diverge to $-\infty$ being asymptotically equivalent to $-2\log r$, therefore the effect of curvatures, in general, results in a slower decay of solutions, indeed they may even remain bounded if case $(i)$ occurs.}
\end{remark}

 \subsection{Stability results and intersection properties of solutions}

 Let us start with the definition of stability.

\begin{definition}
A solution  $u \in C^2(M)$ to \eqref{main_eq} is stable if
\begin{align}\label{stable_def}
	\int_M |\gradg v|_g^2\, \dvg-\int_M e^u \,  v^2\,\dvg\geq 0 \quad \forall v\in C_c^{\infty}(M).
\end{align}
If $u$ does not satisfy \eqref{stable_def}, we say that it is unstable.
\end{definition}

It is well known that stability plays an important role in the classification of solutions of elliptic partial differential equations and the analysis of qualitative properties of solutions, see e.g., the seminal paper \cite{bv}.  In this section, we provide a complete classification of smooth radial solutions of \eqref{main_eq} with respect to stability and we show that the same conditions determine either the stability or the intersection properties. The relationship between stability properties and intersection properties is clarified by the following result.

\begin{theorem} \label{t:stab-inter}
   Let $N\ge 2$ and let $\psi$ satisfy \eqref{A1}-\eqref{A3}. Then the following statements hold:

   \begin{itemize}
     \item[$(i)$] let $\alpha$ and $\beta$ be two distinct real numbers and let $u_\alpha$ and $u_\beta$ be the corresponding solutions of \eqref{regular_ode_model}, i.e. $u_\alpha(0) = \alpha$ and $u_\beta(0) = \beta$. If $u_\alpha$ and $u_\beta$ are stable then they do not intersect;

     \item[$(ii)$] if $u_\alpha$ is a unstable solution of \eqref{regular_ode_model} for some $\alpha \in \R$, then for any $\beta>\alpha$ we have that $u_\beta$ intersects $u_\alpha$ at least once.
   \end{itemize}
\end{theorem}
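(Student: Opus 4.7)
The overall plan is to study the difference $w := u_\beta - u_\alpha$ of the two radial solutions, which by \eqref{regular_ode_model} satisfies
\begin{equation*}
  -\Delta_g w \;=\; e^{u_\beta} - e^{u_\alpha} \qquad \text{on } M,
\end{equation*}
and to combine this identity with the strict convexity of the exponential, which wherever $u_\alpha\neq u_\beta$ gives the two-sided bound
\begin{equation*}
  e^{u_\alpha}(u_\beta-u_\alpha) \;<\; e^{u_\beta}-e^{u_\alpha} \;<\; e^{u_\beta}(u_\beta-u_\alpha).
\end{equation*}
Without loss of generality I take $\beta>\alpha$, so that $w(0)>0$.

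For part $(i)$ I would argue by contradiction. Assume that $u_\alpha$ and $u_\beta$ meet, let $r_0>0$ be the smallest positive zero of $w$, and note that $w>0$ on the geodesic ball $B_{r_0}$ about the pole while $w(r_0)=0$. The right-hand convexity bound yields $-\Delta_g w < e^{u_\beta}\,w$ on $B_{r_0}$. Multiplying by $w>0$ and integrating by parts (the boundary term vanishes since $w_{|\partial B_{r_0}}=0$) produces
\begin{equation*}
  \int_{B_{r_0}}|\nabla_g w|_g^2\,\dvg \;<\; \int_{B_{r_0}} e^{u_\beta}\,w^2\,\dvg.
\end{equation*}
The extension of $w$ by zero outside $B_{r_0}$ is a compactly supported Lipschitz function vanishing on $\partial B_{r_0}$, hence belongs to the $H^1_g$-closure of $C_c^\infty(M)$; by density it is admissible in the stability inequality \eqref{stable_def} for $u_\beta$, contradicting its stability.

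For part $(ii)$ I would again argue by contradiction. If $u_\beta$ never meets $u_\alpha$, then $w>0$ on all of $M$, and the left-hand convexity bound gives $-\Delta_g w \ge e^{u_\alpha}\,w$ globally, i.e.\ $w$ is a strictly positive $C^2$ supersolution of the equation linearised at $u_\alpha$. I would then invoke the standard Allegretto-Piepenbrink / Moss-Piepenbrink principle (which transfers verbatim from $\mathbb{R}^N$ to $(M,g)$ via the substitution $v=w\eta$ with $\eta\in C_c^\infty(M)$, the expansion of $|\nabla_g(w\eta)|_g^2$, and an integration by parts of the mixed term): the existence of such a positive supersolution forces
\begin{equation*}
  \int_M |\nabla_g v|_g^2\,\dvg \;\ge\; \int_M e^{u_\alpha}\,v^2\,\dvg \qquad \forall\, v\in C_c^\infty(M),
\end{equation*}
which is precisely stability of $u_\alpha$ and contradicts the hypothesis.

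The main obstacle I anticipate is technical rather than conceptual: verifying that the cut-off $v:=w\,\chi_{B_{r_0}}$ used in $(i)$ really is admissible in \eqref{stable_def} (i.e.\ that it can be approximated in the weighted $H^1_g$-norm by elements of $C_c^\infty(M)$), and that the Allegretto-Piepenbrink type principle used in $(ii)$ applies at the current level of regularity with no further hypothesis beyond \eqref{A1}-\eqref{A2}. Both are standard once $w\in C^2$ and the appropriate boundary vanishing of $w$ are at hand; with these in place, each implication reduces to a convexity computation tested against the quadratic form in \eqref{stable_def}.
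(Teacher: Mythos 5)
Your part (ii) is essentially the paper's own argument: Lemma \ref{cpoli} proves exactly the statement you reduce to (a positive solution above an unstable one is impossible) by testing $-\Delta_g w\ge e^{u}w$ against $\varphi^2/w$ and using Cauchy--Schwarz and Young, which is the same ground-state/Allegretto--Piepenbrink substitution as your $v=w\eta$. Part (i) is where you genuinely diverge, and your route is correct. The paper (Lemma \ref{no intesect}) never works with the difference $u_\beta-u_\alpha$: it differentiates the solution with respect to the initial datum (Lemma \ref{l4.4}), uses Lagrange's theorem to produce an intermediate $\sigma$ whose variation $v_\sigma=\partial u/\partial\alpha$ has a zero at some $R\le r_0$, and notes that $v_\sigma$ solves the linearized equation \emph{exactly}; the zero-extension therefore makes the stability quadratic form of $u_\sigma$ exactly zero rather than strictly negative, and to extract a contradiction from equality the paper must additionally invoke the monotonicity of stability in $\alpha$ (Lemma \ref{l6.7}), the attainment of the infimum in the Rayleigh quotient, and unique continuation. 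Your argument --- taking $w=u_\beta-u_\alpha$ directly and using the strict convexity of $t\mapsto e^t$ to get $-\Delta_g w<e^{u_\beta}w$ on $B_{r_0}$, hence a \emph{strict} violation of \eqref{stable_def} for $u_\beta$ --- bypasses all of that machinery; the only price is the routine density step you already flag (the zero-extension of $w$ is compactly supported and Lipschitz, $e^{u_\beta}$ is bounded, so the strict inequality survives mollification). This shortcut is available precisely because the nonlinearity here is globally convex; the paper's heavier scheme is carried over from the Lane--Emden analysis of \cite{elvise-14}, where that is not the case. In short: (ii) matches the paper, (i) is a cleaner, more elementary proof of the same fact.
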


We observe that in the proof Theorem \ref{t:stab-inter} -$(ii)$, we actually show the validity of a more general result involving also non-radial smooth solutions of \eqref{main_eq}. Indeed, in Lemma \ref{cpoli} we prove that if $u$ is a smooth unstable solution of \eqref{main_eq} then \eqref{main_eq} does not admit any smooth solution $v$ satisfying $v>u$ in $M$.

We now state the two main results about the stability of radial smooth solutions of \eqref{main_eq} characterized by dimensions
$2\le N\le 9$ and $N \ge 10$, respectively.

\begin{theorem} \label{stability statement}
Let $2\leq N\leq 9$ and let $\psi$ satisfy \eqref{A1}-\eqref{A3}. For any $\alpha \in \R$ denote by $u_{\alpha}$ the unique solution to \eqref{regular_ode_model}. Then there exist $\eta\in \R$ such that
\begin{itemize}
	\item[$(i)$] if $\alpha\in (-\infty, \eta]$ then $u_\alpha$ is stable;
	\item[$(ii)$] if $\alpha>\eta$ then $u_\alpha$ is unstable.
\end{itemize}
Furthermore, we have that $\eta\geq \log(\lambda_1(M))$ with the strict inequality if $\frac{\psi}{\psi^\prime}\notin L^1(0,+\infty)$ where $\lambda_1(M)$ denotes the bottom of the spectrum of $-\Delta_g$ in $M$.
\end{theorem}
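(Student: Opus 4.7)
The plan is to set $\eta := \sup\{\alpha \in \R : u_\alpha \text{ is stable}\}$ and to prove four things: $(a)$ $\eta \ge \log \lambda_1(M)$; $(b)$ $\eta < +\infty$; $(c)$ $u_\alpha$ is stable for all $\alpha \le \eta$ and unstable for all $\alpha > \eta$; $(d)$ the inequality in $(a)$ is strict when $\psi/\psi' \notin L^1(0,\infty)$. Part $(a)$ is the easy step: Proposition~\ref{first_facts} yields $u_\alpha(r) \le \alpha$ for every $r \ge 0$, so $e^{u_\alpha} \le \lambda_1(M)$ when $\alpha \le \log \lambda_1(M)$, and the Poincar\'e inequality $\int_M |\gradg v|_g^2 \, \dvg \ge \lambda_1(M) \int_M v^2 \, \dvg$ valid for every $v \in C_c^\infty(M)$ immediately gives the stability inequality \eqref{stable_def}.

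For $(b)$, the main technical obstacle, the approach is a blow-up around the pole that reduces the problem to the Euclidean one, whose instability for $2 \le N \le 9$ is classical. Setting $\mu := e^{-\alpha/2}$ and $v_\alpha(s) := u_\alpha(\mu s) + 2 \log \mu$, equation \eqref{regular_ode_model} transforms to
\[
-v_\alpha''(s) - (N-1)\, \mu\, \frac{\psi'(\mu s)}{\psi(\mu s)}\, v_\alpha'(s) \,=\, e^{v_\alpha(s)}, \qquad v_\alpha(0) = 0, \quad v_\alpha'(0) = 0,
\]
and by \eqref{A1} the coefficient $\mu \psi'(\mu s)/\psi(\mu s)$ tends to $1/s$ locally uniformly in $s > 0$ as $\alpha \to +\infty$. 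Hence $v_\alpha$ converges in $C^2_{\mathrm{loc}}$ to the unique entire radial solution $V$ of $-\Delta V = e^V$ in $\rn$ with $V(0) = 0$. Since $V$ is unstable when $N \le 9$ (the classical fact recalled in the Appendix), some radial $\phi \in C_c^\infty(\rn)$ satisfies $\int_{\rn} (|\nabla \phi|^2 - e^V \phi^2)\, \dx < 0$. Transplanting via $\Phi_\alpha(r) := \phi(r/\mu)$ and changing variable $r = \mu s$ in spherical coordinates one computes
\[
\int_M (|\gradg \Phi_\alpha|_g^2 - e^{u_\alpha} \Phi_\alpha^2) \, \dvg = \mu^{N-2} \int_{\rn} (|\nabla \phi|^2 - e^{v_\alpha} \phi^2)\, \dx + o(\mu^{N-2}),
\]
which is negative for $\alpha$ large, so $u_\alpha$ is unstable.

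For $(c)$, the stability set $S_s := \{\alpha : u_\alpha \text{ stable}\}$ should be both closed and downward-closed. Closedness follows from $C^2_{\mathrm{loc}}$ dependence of $u_\alpha$ on $\alpha$ together with Fatou's lemma in \eqref{stable_def}. Downward-monotonicity is the more delicate point: I would use an Allegretto--Piepenbrink-type characterisation --- stability of $u_\alpha$ is equivalent to the existence of a positive classical solution of the linearised equation $-\Delta_g w = e^{u_\alpha} w$ --- applied to the natural candidate $w_\alpha := \partial_\alpha u_\alpha$, which solves the linearised radial ODE with $w_\alpha(0) = 1$, $w_\alpha'(0) = 0$. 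Stability of $u_\alpha$ forces $w_\alpha > 0$ on $[0,\infty)$, i.e.\ $\alpha \mapsto u_\alpha(r)$ is strictly increasing at every $r$; combining this with the no-intersection property from Theorem~\ref{t:stab-inter}(i) and the technical comparison lemmas of Section \ref{5} promotes the infinitesimal monotonicity to the global inequality $u_\beta \le u_\alpha$ for $\beta \le \alpha$ with $u_\alpha$ stable, and $e^{u_\beta} \le e^{u_\alpha}$ transfers stability from $u_\alpha$ to $u_\beta$.

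Finally, for $(d)$: when $\psi/\psi' \notin L^1(0,\infty)$, Theorem~\ref{final}(ii) gives $u_{\log \lambda_1(M)}(r) \to -\infty$, so $e^{u_{\log \lambda_1(M)}}$ is strictly less than $\lambda_1(M)$ on $(0,\infty)$ and decays to zero. Combined with Poincar\'e this upgrades the stability inequality for $u_{\log \lambda_1(M)}$ to a strict one, and the associated positive linearised solution $w_{\log \lambda_1(M)}$ persists by continuity to a positive $w_\alpha$ for $\alpha$ slightly larger than $\log \lambda_1(M)$; Allegretto--Piepenbrink then yields stability of such $u_\alpha$, so $\eta > \log \lambda_1(M)$. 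The dimensional input $N \le 9$ enters only through $(b)$, via the Euclidean instability of $V$, and the geometric error $\psi(r) - r$ in the blow-up --- to be controlled uniformly down to the pole --- is the main source of technical difficulty; the downward-monotonicity step in $(c)$ is the other point where the auxiliary machinery of Section \ref{5} is essential.
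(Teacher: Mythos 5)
Your overall architecture (define $\eta=\sup\mathcal S$; Poincar\'e for the lower bound; blow--up to the Euclidean problem plus Farina's instability for finiteness; closedness plus downward monotonicity of $\mathcal S$; a spectral--gap argument for strictness) coincides with the paper's, and parts $(a)$ and $(b)$ are essentially the paper's Lemmas \ref{l6.3} and \ref{t6.2} (your blow--up is run forward with a transplanted test function rather than by contradiction on the limit, but the content — uniform $C^1$ bounds via the Lyapunov functional, $\mu\psi'(\mu s)/\psi(\mu s)\to 1/s$, convergence to a Euclidean solution — is the same). The problem is step $(c)$, the downward monotonicity of $\mathcal S$, which is where the real work lies and where your argument is circular. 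Stability of $u_\alpha$ does force $w_\alpha=\partial_\alpha u_\alpha>0$ (that is exactly the mechanism of Lemma \ref{no intesect}), but this is positivity of the linearisation \emph{at the single value} $\alpha$. To deduce $u_\beta\le u_\alpha$ for $\beta<\alpha$ by the mean value theorem you need $w_\sigma(r)>0$ for an intermediate $\sigma\in(\beta,\alpha)$ depending on $r$, i.e.\ you need stability of $u_\sigma$ for the intermediate values — which is precisely the statement being proved. Invoking Theorem \ref{t:stab-inter}$(i)$ does not repair this: that result requires \emph{both} solutions to be stable, and in the paper its proof (Lemma \ref{no intesect}) itself calls on the downward closedness of $\mathcal S$ (Lemma \ref{l6.7}) to know that $u_\sigma$ is stable. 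The paper breaks the circle by proving Lemma \ref{l6.7} first and independently: if $u_\alpha$ and $u_\beta$ do not intersect the transfer of stability is the easy monotonicity of the quadratic form you describe, but if they \emph{do} intersect one must prove instability of $u_\alpha$ directly, and this is done with the first-intersection comparison Lemma \ref{l:5.7} together with Lemma \ref{l:5.8} ($\lambda_1(B_r)\to+\infty$ as $r\to0^+$), following the scheme of \cite[Lemma 5.9]{elvise-14}. Your proposal contains no argument for the intersecting case, and without it the claim ``$e^{u_\beta}\le e^{u_\alpha}$ transfers stability'' has nothing to stand on.

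A secondary gap is in $(d)$. Pointwise strictness $e^{u_{\bar\alpha}}<\lambda_1(M)$ on $(0,\infty)$ gives a strict inequality for each fixed test function but not a uniform gap $\Lambda(M,\bar\alpha)>1$; the paper obtains the gap from the existence of a minimiser (Lemma \ref{big_lambda}) and the observation that $\Lambda(M,\bar\alpha)=1$ would force $u_{\bar\alpha}$ to be constant. More seriously, ``the positive linearised solution persists by continuity to a positive $w_\alpha$ for $\alpha$ slightly larger'' is unjustified: continuous dependence gives positivity only on compact sets, while the Allegretto--Piepenbrink criterion needs positivity on all of $[0,+\infty)$, and losing positivity at infinity is exactly the danger. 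The paper's Lemma \ref{l:eta>} controls this by a concentration--compactness argument that uses the decay $e^{u_{\alpha_k}}\to0$ at infinity (available precisely because $\psi/\psi'\notin L^1$) to show $\liminf_k\Lambda(M,\alpha_k)\ge\Lambda(M,\bar\alpha)>1$. You should either reproduce that compactness step or supply an alternative control of the linearised solution at infinity.
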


\begin{theorem} \label{t:stability} Let $N \ge 10$ and let $\psi$ satisfy  \eqref{A1}-\eqref{A3} and the additional condition
    \begin{equation} \label{eq:A4}
     \tag{A5} \psi\in C^3([0,+\infty)), \quad  [\log(\psi'(r))]''>0  \quad \text{for } r>0 \, .
   \end{equation}
 Then all solutions to \eqref{regular_ode_model} are stable.
\end{theorem}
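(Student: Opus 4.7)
The plan is to apply the Allegretto--Piepenbrink principle: since non-radial spherical harmonic modes contribute only nonnegative corrections of the form $k(k+N-2)/\psi^2$, it is enough to find a radial positive $\varphi(r)>0$ satisfying $\mathcal{L}\varphi := -\varphi'' - (N-1)(\psi'/\psi)\varphi' - e^u\varphi \ge 0$ on $(0,+\infty)$. Guided by the Euclidean Joseph--Lundgren analysis, the natural candidate is $\varphi_0(r) := \psi(r)^{-(N-2)/2}$, for which a direct computation gives
\[
  \mathcal{L}\varphi_0 \;=\; \varphi_0(r)\,\bigl[\,Q(r) - e^{u(r)}\,\bigr], \qquad Q(r) := \frac{(N-2)^2}{4}\Bigl(\frac{\psi'(r)}{\psi(r)}\Bigr)^{2} + \frac{N-2}{2}\,\frac{\psi''(r)}{\psi(r)}.
\]
Stability thus reduces to the pointwise bound $e^{u(r)} \le Q(r)$ for all $r>0$ and all solutions $u$ of \eqref{regular_ode_model}.

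The two endpoint regimes are essentially automatic: near $r=0^+$, \eqref{A1} gives $Q(r) \sim (N-2)^2/(4r^2) \to +\infty$ while $e^{u(r)}\to e^{\alpha}$; and as $r\to+\infty$, \eqref{A3} keeps $Q$ bounded below by a positive constant while Theorem \ref{final} drives $e^{u(r)}$ to a sufficiently small value (to $0$ in case (ii), and to a limit controlled by \eqref{eq:A4} in case (i)). The real difficulty is the intermediate range, which I would attack by contradiction: if $r_0$ is a first crossing of $e^u$ and $Q$, integrating $(\psi^{N-1}u')' = -\psi^{N-1}e^u$ over $(0,r_0)$ together with the identity $(\psi^{N-2}\psi')' = (N-2)\psi^{N-3}(\psi')^2 + \psi^{N-2}\psi''$ produces a quantitative upper bound on $|u'(r_0)|$. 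Coupling this with the sign condition $(Q-e^u)'(r_0)\le 0$ and the explicit form of $Q'$ in terms of $\psi,\psi',\psi'',\psi'''$ yields an algebraic inequality at $r_0$ that cannot hold under the joint assumptions $N \ge 10$ (equivalent to the Joseph--Lundgren algebra $(N-2)^2/4 \ge 2(N-2)$) and \eqref{eq:A4} (giving $\psi''\ge 0$, $\psi'\ge 1$ on $[0,+\infty)$, and $\psi'''>0$).

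The main obstacle is this intermediate-range contradiction. In $\mathbb{R}^N$ the analogous step is handled via the Emden change of variable $s=\log r$, $v(s)=u(e^s)+2s$, which turns the problem into an autonomous two-dimensional phase-plane system whose equilibrium type changes character precisely at $N=10$; no such reduction is known on a general Riemannian model, so the algebra must be carried out directly on \eqref{regular_ode_model}. The hypothesis \eqref{eq:A4} (log-convexity of $\psi'$) is exactly what supplies the missing monotonicity---of $\psi''/\psi'$, hence of several auxiliary combinations of derivatives of $\psi$---that lets the contradiction close. If the direct pointwise bound $e^u \le Q$ turns out to be too strong in some sub-interval, a fallback is to replace $\varphi_0$ by $\varphi_0 \cdot h(r)$ with a correcting factor $h>0$: a short conjugation computation shows that $\mathcal{L}(\varphi_0 h) \ge 0$ is equivalent to $-(1/\psi)(\psi h')' \ge (e^u-Q)h$, which is a positive-supersolution problem for a two-dimensional Schr\"odinger-type operator with weight $\psi$, amenable to dimension-two Hardy inequalities adapted to the model $\psi$.
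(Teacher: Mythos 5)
Your overall architecture is sound and genuinely different from the paper's: you reduce stability to exhibiting an explicit positive radial supersolution $\varphi_0=\psi^{-(N-2)/2}$ of the linearized operator, i.e.\ to the pointwise bound $e^{u(r)}\le Q(r)$ with $Q=\frac{(N-2)^2}{4}(\psi'/\psi)^2+\frac{N-2}{2}\,\psi''/\psi$, and your conjugation identity $\mathcal{L}\varphi_0=\varphi_0\,[Q-e^u]$ checks out. The paper instead proves that for $N\ge10$ under \eqref{eq:A4} any two regular radial solutions are globally ordered (Lemma \ref{l:ordering}) and then invokes the fact that an unstable solution cannot lie below another solution (Lemma \ref{cpoli}); the positive supersolution it effectively uses is the difference $w=v-u$ of two solutions rather than an explicit function of $\psi$.

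However, the heart of your proof --- the pointwise bound $e^u\le Q$ in the intermediate range --- is missing, and the sketch you give for it would not close as stated. At a first crossing $r_0$ you only extract first-order information (the sign of $(Q-e^u)'(r_0)$ plus an integral bound on $|u'(r_0)|$), and no algebraic contradiction is available from data at a single point: $e^u$ and $Q$ are both decreasing (e.g.\ on $\mathbb{H}^N$), and nothing in your argument prevents them from meeting tangentially. The estimate you need is in fact exactly a weakening (since $\frac{(N-2)^2}{4}\ge 2(N-2)$ for $N\ge10$ and $\psi''>0$ under \eqref{eq:A4}) of the paper's Lemma \ref{l:estimate}, namely $e^{u}<2(N-2)(\psi'/\psi)^2$, and its proof there is not a first-crossing argument: it is a second-order comparison principle (Lemma \ref{l:comparison-principle}) applied to $W=2\log\psi'-u-2\log\psi+\log[2(N-2)]$, in which one shows that $\psi^{N-1}(W'Z-Z'W)$ is monotone, with $Z=\psi^{\lambda_1}$ and $\lambda_1$ the negative root of $\lambda^2+(N-2)\lambda+2(N-2)$ --- real precisely because $N\ge10$ --- together with the boundary asymptotics of $W$ at $r=0$. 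That Wronskian-monotonicity argument (or an equivalent substitute) is the missing idea; without it your proof does not go through. Once the estimate is granted, your Allegretto--Piepenbrink step does give a clean alternative ending that bypasses Lemmas \ref{l:ordering} and \ref{cpoli}.
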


\begin{remark}\label{R2}
{\rm
The fact that $N=10$ is a critical threshold for stability is well-known in the Euclidean case, see \cite{farina2} and the Appendix. Here a new critical value $\eta$ arises which has no analogous in the flat case where solutions are always unstable if $2\le N\le 9$ and Theorem \ref{stability statement}-$(i)$ never occurs. In other words, we can say that, by the effect of assumption \eqref{A3}, the critical dimension does not exist if $\alpha \leq \eta$ since solutions are stable for all $N\geq 2$. \\
About assumption \eqref{eq:A4}, it is technical but it includes the most interesting examples. Indeed, it holds in the relevant case of the hyperbolic space, since $[\log(\psi'(r))]''=(\cosh r)^{-2}$. Models with unbounded curvatures satisfying \eqref{eq:A4} can be build as well, e.g., by taking $\psi(r)=e^{r^a}$ with $a>1$ for $r\geq 1$, indeed $[\log(\psi'(r))]''=\frac{a-1}{r^2}(ar^a-1)>0$ for $r\geq 1$.}
\end{remark}

 At last, concerning intersection properties, we have:

 \begin{theorem}\label{intersection}
Let $N\geq 2$, let $\psi$ satisfy \eqref{A1}-\eqref{A3} and let $\eta$ be as in the statement of Theorem \ref{stability statement}. We have that:
\begin{itemize}
\item[$(i)$] if $2\leq N\leq 9$ and $u_\alpha$, $u_\beta$ are two solutions of \eqref{regular_ode_model} with $\alpha, \beta \in (-\infty, \eta]$ then $u_\alpha$ and $u_\beta$ do not intersect;
\item[$(ii)$] if $2\leq N\leq 9$ and $u_\alpha$, $u_\beta$ are two solutions of \eqref{regular_ode_model} with $\alpha, \beta>\eta$ then $u_\alpha$ and $u_\beta$ intersect at least once;
\item[$(iii)$] if $N\geq 10$ and the additional condition \eqref{eq:A4} holds true, then for any $\alpha, \beta \in \R$ the corresponding solutions $u_\alpha$ and $u_\beta$ of \eqref{regular_ode_model} do not intersect.
\end{itemize}

\end{theorem}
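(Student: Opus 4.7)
The striking feature of Theorem \ref{intersection} is that it is essentially a corollary of the two stability theorems (Theorems \ref{stability statement} and \ref{t:stability}) combined with the stability--intersection link established in Theorem \ref{t:stab-inter}. My plan is therefore to simply orchestrate these three results into the three cases.

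For part $(i)$, I assume without loss of generality that $\alpha \neq \beta$ (otherwise uniqueness for \eqref{regular_ode_model} forces $u_\alpha \equiv u_\beta$, so the statement is vacuous). Since $\alpha, \beta \in (-\infty, \eta]$ and $2 \le N \le 9$, Theorem \ref{stability statement}$(i)$ asserts that both $u_\alpha$ and $u_\beta$ are stable solutions of \eqref{main_eq}. Theorem \ref{t:stab-inter}$(i)$ then applies verbatim and yields that $u_\alpha$ and $u_\beta$ do not intersect.

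For part $(ii)$, again assume $\alpha \neq \beta$, and without loss of generality take $\alpha < \beta$. Because $\alpha > \eta$, Theorem \ref{stability statement}$(ii)$ tells us that $u_\alpha$ is unstable. Since $\beta > \alpha$, Theorem \ref{t:stab-inter}$(ii)$ directly gives that $u_\beta$ intersects $u_\alpha$ at least once. For part $(iii)$, under the additional hypothesis \eqref{eq:A4}, Theorem \ref{t:stability} ensures that every solution of \eqref{regular_ode_model} is stable; invoking Theorem \ref{t:stab-inter}$(i)$ once more (after excluding the trivial case $\alpha=\beta$) concludes the proof.

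Since all three items reduce to direct citations, I do not foresee any substantive obstacle in the proof itself. The only point that deserves a brief sentence is the trivial disposal of the case $\alpha = \beta$ by the uniqueness part of the local existence theory for \eqref{regular_ode_model} (already invoked in the discussion preceding Proposition \ref{first_facts}). In particular, no new curvature or dimension analysis is required at this stage: all the technical heavy lifting is concentrated in the stability results, and Theorem \ref{intersection} is best presented as a concise consequence of them.
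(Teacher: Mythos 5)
Your proposal is correct and follows essentially the same route as the paper: the authors likewise dispose of $(i)$ and $(iii)$ by combining Theorem \ref{stability statement}-$(i)$ and Theorem \ref{t:stability} with the non-intersection result for stable solutions (Lemma \ref{no intesect}, i.e. Theorem \ref{t:stab-inter}-$(i)$), and of $(ii)$ by combining Theorem \ref{stability statement}-$(ii)$ with Lemma \ref{cpoli}, which is exactly the content of Theorem \ref{t:stab-inter}-$(ii)$ that you invoke.
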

\begin{remark}\label{R3}
{\rm
In $\R^N$ it is known, see e.g., \cite[Theorem 1.1]{tello-06}, that every two smooth radial solutions intersect each other once if $N=2$ and infinitely many times if $3\leq N\leq 9$ while for $N\geq 10$ smooth radial solutions do not intersect. Therefore, the behaviour stated in Theorem \ref{intersection}-$(i)$ (with its counterpart Theorem \ref{stability statement}-$(i)$) is, as a matter of fact, an effect of non-vanishing curvatures.
}
\end{remark}

\section{Proof of Proposition \ref{first_facts}}\label{3}

	Let $(0,R)$ be the maximum interval where the local solution $u$ of \eqref{regular_ode_model} is defined and consider the Lyapunov functional
	\begin{align}\label{lyapunov_functional_model}
		F(r):=\frac{1}{2} (u^\prime(r))^2	+ e^{u(r)} \quad \text{for any } r\in (0, R) \, .
	\end{align}
	It is readily seen that
	\begin{align*}
		F^\prime(r)=u^\prime(r)[u^{\prime\prime}(r)+e^{u(r)}]=-(N-1)u^\prime(r)^2\frac{\psi^\prime(r)}{\psi(r)}\leq 0 \quad \text{for any } r\in (0, R) \, .
	\end{align*}
	This proves that $F$ is decreasing and $0< F(r)\leq e^\alpha$ for any $r\in (0, R)$, therefore both $u^\prime(r)$ and $e^{u(r)}$ are bounded in $(0,R)$. On the other hand, integrating \eqref{relation_model} from $0$ to $r$ we deduce
	\begin{align*}
		(\psi(r))^{N-1}u^\prime(r)=-\int_{0}^{r}(\psi(s))^{N-1} \, e^{u(s)}\ds< 0 \quad \text{for any } r \in (0,R) \, .
	\end{align*}
	Therefore, $u^\prime$ is a negative function in $(0,R)$ and hence $u$ is decreasing in $(0,R)$ as claimed.
	
	Finally, we show that $R=+\infty$.  Assume by contradiction that $R$ is finite and let
	\begin{align*}
		l:=\lim_{r\rightarrow R^-}u(r) \in [-\infty,\alpha) \, .
	\end{align*}
	If $l=-\infty$ we immediately get a contradiction since
	\begin{align*}
		u(r)=\alpha +\int_{0}^{r}u^\prime(s)\ds \qquad \text{for any } r\in (0,R)
	\end{align*}
	and taking the limit as $r\rightarrow R^-$ we have that the left hand side goes to $-\infty$ while the right hand side  remains bounded since $u^\prime(r)$ is bounded. Being $l$ finite, from local existence for solutions of ordinary differential equations with initial value conditions, the solution $u$ may be extended in a right neighbourhood of $R$ thus contradicting the maximality of $R$. This shows that $R=+\infty$ completes the proof of the proposition.
\endproof

\section{Proof of Theorem \ref{final} }\label{4}

Throughout this section, we will always assume the validity of conditions \eqref{A1}-\eqref{A3} and when $\Lambda=+\infty$ in \eqref{A3} we also assume the validity of the additional condition \eqref{A6}. We start by showing that $u'$ admits a limit at infinity and this limit is zero:

\begin{lemma} \label{l:u'-0}
  Let $u$ be the unique solution of \eqref{regular_ode_model}. Then
   \begin{equation*}
      \lim_{r \to +\infty} u'(r)=0 \, .
   \end{equation*}
\end{lemma}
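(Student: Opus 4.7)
My plan is to first argue that $u'$ admits a limit at infinity, and then rule out any negative value for that limit by exploiting the ODE together with assumption \eqref{A3}.

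First I would extract the existence of $\lim_{r\to+\infty} u'(r)$ from the Lyapunov functional \eqref{lyapunov_functional_model}. By Proposition \ref{first_facts}, $u$ is decreasing on $[0,+\infty)$, hence $\ell := \lim_{r\to+\infty} u(r) \in [-\infty,\alpha)$ exists and $e^{u(r)}\to e^{\ell}\in [0,e^{\alpha})$ (with the convention $e^{-\infty}=0$). Since $F$ is nonnegative and nonincreasing (as shown in the proof of Proposition \ref{first_facts}), $F(r)$ converges to some $F_{\infty}\ge 0$. Consequently
$$(u'(r))^2 = 2\bigl(F(r) - e^{u(r)}\bigr) \longrightarrow 2(F_{\infty}-e^{\ell}) \ge 0,$$
and because $u'(r)\le 0$ for all $r\ge 0$, this forces $u'(r)\to L$ for some $L\in [-\sqrt{2e^{\alpha}},0]$.

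Next I would argue by contradiction, assuming $L<0$. In this case $u(r)=\alpha+\int_0^r u'(s)\,\textrm{d}s$ diverges to $-\infty$ at a linear rate, so $\ell=-\infty$ and $e^{u(r)}\to 0$. Rewriting \eqref{regular_ode_model} as
$$u''(r) = -(N-1)\frac{\psi'(r)}{\psi(r)}\, u'(r) - e^{u(r)},$$
and using \eqref{A3}, one checks that the right-hand side admits the limit $-(N-1)\Lambda L = (N-1)\Lambda |L| > 0$ when $\Lambda$ is finite, and diverges to $+\infty$ when $\Lambda=+\infty$. Either way there exist $\varepsilon>0$ and $r_0>0$ such that $u''(r)\ge \varepsilon$ for all $r\ge r_0$, and integrating yields $u'(r)\to +\infty$, which contradicts the boundedness of $u'$ guaranteed by Proposition \ref{first_facts}. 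Hence $L=0$.

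I do not expect any real obstacle here: the Lyapunov functional already does the main work of producing a limit, and the contradiction step is a short ODE argument. It is worth noticing that the proof uses only assumptions \eqref{A1}-\eqref{A3} — in particular the strict positivity $\Lambda>0$ in \eqref{A3} is what makes the contradiction possible, and the technical hypothesis \eqref{A6} plays no role at this stage and will instead be needed only for the finer asymptotic rates in part $(ii)$ of Theorem \ref{final}.
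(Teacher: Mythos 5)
Your proof is correct and follows essentially the same route as the paper's: the Lyapunov functional $F$ from \eqref{lyapunov_functional_model} yields existence of the limit $L=\lim_{r\to+\infty}u'(r)\le 0$, and the assumption $L<0$ is refuted by showing $u''$ stays bounded away from zero from below via \eqref{A3}, forcing $u'\to+\infty$. Your closing remark that \eqref{A6} is not needed here is also consistent with the paper's argument.
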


\begin{proof}
    From Proposition \ref{first_facts} we know that $u$ is decreasing in $[0,+\infty)$ and hence it admits a limit as $r\to +\infty$. We put
    \begin{equation} \label{eq:l}
        l:=\lim_{r\to +\infty} u(r)\in [-\infty,\alpha) \, ,
    \end{equation}
    where $\alpha=u(0)$.
    Let $F$ be the function defined in \eqref{lyapunov_functional_model}. Since $F$ is non-increasing, then $0< F(r)\leq F(0)=e^\alpha$ for any $r \ge 0$. Hence, there exists $c \in [0,+\infty)$ such that $c=\lim_{r\rightarrow +\infty}F(r)$.

    By \eqref{eq:l} we deduce that $\bar l:=\lim_{r\to +\infty} e^{u(r)}\in [0,+\infty)$ so that by \eqref{lyapunov_functional_model} we obtain
    \begin{equation} \label{eq:limit-u'}
       \lim_{r \to +\infty} u'(r)=\lim_{r\to +\infty} -\sqrt{2F(r)-e^{u(r)}}=-\sqrt{2c-\bar l}=\gamma \in (-\infty,0] \, .
    \end{equation}
    It remains to prove that $\gamma=0$. Suppose by contradiction that $\gamma<0$.
    This implies
    \begin{equation} \label{eq:lim-u}
        \lim_{r\to +\infty} u(r)=\lim_{r \to +\infty} \left[\alpha+\int_0^r u'(s) \, {\rm d}s\right]=-\infty \, .
    \end{equation}
    Letting $\Lambda\in (0,+\infty]$ be as in \eqref{A3}, by \eqref{regular_ode_model}, \eqref{eq:limit-u'} and \eqref{eq:lim-u} we obtain
    \begin{equation} \label{eq:claim-u''}
       \lim_{r\to +\infty} u''(r)=\lim_{r \to +\infty} \left[-(N-1) \frac{\psi'(r)}{\psi(r)} \, u'(r)-e^{u(r)}\right]=-(N-1) \Lambda \gamma \in (0,+\infty] \, .
    \end{equation}
    In turn, by \eqref{eq:claim-u''} and \eqref{regular_ode_model}, we infer that
    \begin{equation*}
        \lim_{r\to +\infty} u'(r)=\lim_{r \to +\infty} \int_0^r u''(s) \, {\rm d}s=+\infty \, ,
    \end{equation*}
    in contradiction with \eqref{eq:limit-u'}. This completes the proof of the lemma.
\end{proof}

By exploiting Lemma \ref{l:u'-0} we prove:

\begin{lemma}\label{limitexist}
	Let $u$ be the unique solution of \eqref{regular_ode_model}. Then the following limit exists
	\begin{align*}
		\lim_{r\rightarrow +\infty}\frac{u^{\prime\prime}(r)}{u^\prime(r)}\frac{\psi(r)}{\psi^\prime(r)} \,.
	\end{align*}
\end{lemma}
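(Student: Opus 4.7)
My approach is to reduce the problem via the ODE to a ratio in which both numerator and denominator diverge, then apply L'H\^opital's rule. Dividing \eqref{regular_ode_model} by $u'(r)<0$ (cf.\ Proposition~\ref{first_facts}) and multiplying by $\psi/\psi'>0$ gives the identity
\[
  \frac{u''(r)}{u'(r)}\cdot\frac{\psi(r)}{\psi'(r)} \;=\; -(N-1) \;-\; \frac{e^{u(r)}\psi(r)}{u'(r)\psi'(r)},
\]
so it suffices to prove the existence of $\lim_{r\to+\infty}\frac{e^u\psi}{u'\psi'}$. Integrating $(\psi^{N-1}u')' = -\psi^{N-1}e^u$ from $0$ to $r$ (using $u'(0)=0$) yields $u'(r) = -\psi(r)^{-(N-1)}I(r)$ with $I(r) := \int_0^r\psi^{N-1}e^u\,ds$, so that
\[
  -\frac{e^u\psi}{u'\psi'} \;=\; \frac{e^{u(r)}\psi(r)^N}{\psi'(r)\,I(r)}.
\]

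The second step is to verify $I(r)\to+\infty$ and then apply L'H\^opital. Since $u$ is decreasing by Proposition~\ref{first_facts}, the limit $l := \lim_{r\to+\infty}u(r)\in[-\infty,\alpha)$ exists: if $l > -\infty$ then $e^u\ge e^l > 0$ and $\psi^{N-1}\to +\infty$ by~\eqref{A3} give $I\to +\infty$ directly, while if $l=-\infty$ I combine Lemma~\ref{l:u'-0} with the first-order equation $v' + (N-1)(\psi'/\psi)v = e^u$ (for $v=-u'$) to rule out that $I$ stays bounded. With $I\to+\infty$ and $I'(r) = \psi^{N-1}e^u$, L'H\^opital applied to the ratio $\frac{e^u\psi^N/\psi'}{I}$ reduces the problem to computing
\[
  \lim_{r\to+\infty}\frac{(e^u\psi^N/\psi')'}{\psi^{N-1}e^u}
  \;=\; \lim_{r\to+\infty}\!\left(\frac{u'\psi}{\psi'} \,+\, N \,-\, \frac{\psi\,\psi''}{(\psi')^2}\right).
\]

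Finally I analyse the three terms. The first tends to $0$: $u'\to 0$ by Lemma~\ref{l:u'-0}, while $\psi/\psi'$ is bounded since $\psi'/\psi\to\Lambda\in(0,+\infty]$ by \eqref{A3}. For the key term, I rely on the algebraic identity $\psi\psi''/(\psi')^2 = 1 + (\psi'/\psi)'/(\psi'/\psi)^2$: when $\Lambda = +\infty$, condition \eqref{A6} yields $(\psi'/\psi)'/(\psi'/\psi) = O(1)$ and hence $(\psi'/\psi)'/(\psi'/\psi)^2 = O(\psi/\psi')\to 0$; when $\Lambda\in(0,+\infty)$, a separate argument based on the convergence $\psi'/\psi\to\Lambda$ is used to obtain the same conclusion. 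This gives $\psi\psi''/(\psi')^2 \to 1$, the L'H\^opital ratio equals $0 + N - 1 = N-1$, and therefore the desired limit exists and equals $-(N-1) + (N-1) = 0$.

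\textbf{Main obstacle.} The hardest step is justifying $\psi\psi''/(\psi')^2\to 1$ in the case $\Lambda\in(0,+\infty)$: no explicit bound on derivatives of $\psi'/\psi$ is available from \eqref{A1}--\eqref{A3} alone, so the proof must extract this asymptotic convergence from the $C^2$ regularity of $\psi$ combined with the fact that $\psi'/\psi$ has a finite positive limit. A secondary technical point is the case analysis needed to guarantee $I(r)\to+\infty$ uniformly across the regimes $l>-\infty$ and $l=-\infty$.
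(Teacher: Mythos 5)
Your route is genuinely different from the paper's: you convert $u'$ into the integral $u'(r)=-(\psi(r))^{-(N-1)}I(r)$ and try to compute $\lim e^u\psi/(u'\psi')$ by L'H\^opital, whereas the paper never touches $\psi''$ at all. It sets $\chi=e^u/u'$ (resp.\ $\chi=e^u\psi/(u'\psi')$ when $\Lambda=+\infty$), assumes for contradiction that $\chi$ has no limit, and evaluates $\chi$ at its critical points $r_m$, where $\chi'(r_m)=0$ forces $(u'(r_m))^2=u''(r_m)$; plugging this into the ODE gives $\chi(r_m)\to-(N-1)\Lambda$ (resp.\ $-(N-1)$ using \eqref{A6}), contradicting non-convergence. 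Your method, where it works, is stronger (it identifies the limit as $0$ directly, which the paper only obtains later in Lemma~\ref{limitinfty} under the extra hypothesis $u\to-\infty$), but it pays for this by needing second-order information on $\psi$.

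That is exactly where the gap is, and you have correctly located it but incorrectly claimed it can be closed. In the case $\Lambda\in(0,+\infty)$ your L'H\^opital quotient requires $\psi\psi''/(\psi')^2\to 1$, i.e.\ $(\psi'/\psi)'/(\psi'/\psi)^2\to 0$. This does \emph{not} follow from $C^2$ regularity plus $\psi'/\psi\to\Lambda$: take $\psi$ with $\psi'/\psi=\Lambda+\sin(r^2)/r$ for large $r$ (and suitably modified near $0$ to satisfy \eqref{A1}--\eqref{A2}). Then $\psi'/\psi\to\Lambda>0$ and $\psi\in C^2$, but $(\psi'/\psi)'=2\cos(r^2)-\sin(r^2)/r^2$ oscillates, so $\psi\psi''/(\psi')^2$ has no limit, your derivative quotient has no limit, and L'H\^opital is simply inapplicable (the one-sided Stolz inequalities only trap $\liminf$ and $\limsup$ between those of the derivative quotient, which here differ). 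Since the lemma is asserted under \eqref{A1}--\eqref{A3} alone, the proof as written does not cover all admissible $\psi$. The gap is repairable within your own framework: for $\Lambda\in(0,+\infty)$ apply L'H\^opital instead to $-e^u/u'=e^u\psi^{N-1}/I(r)$, whose derivative quotient is $u'+(N-1)\psi'/\psi\to(N-1)\Lambda$ by Lemma~\ref{l:u'-0} and \eqref{A3}, with no reference to $\psi''$; then multiply by $\psi/\psi'\to 1/\Lambda$. Your treatment of the case $\Lambda=+\infty$ under \eqref{A6}, and your verification that $I(r)\to+\infty$, are sound.
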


\begin{proof} We divide the proof into two parts depending on whether $\Lambda$ is finite or not.

{\bf The case $\Lambda \in (0,+\infty)$.} From \eqref{regular_ode_model}, we have that
	\begin{align}\label{et_eq_a_1_model}
		-\frac{u^{\prime\prime}(r)}{u^\prime(r)}-(N-1)\frac{\psi^\prime(r)}{\psi(r)}=\frac{e^{u(r)}}{u^\prime(r)}.
	\end{align}
	The above identity suggests that if $\chi(r):=\frac{e^{u(r)}}{u^\prime(r)}$ admits a limit as $r \to +\infty$, then the same does the function $\frac{u^{\prime\prime}(r)}{u^\prime(r)}$. This means that the proof of Lemma \ref{limitexist} follows once we prove that $\chi$ admits a limit as $r\to +\infty$.

We proceed by contradiction assuming that $\chi$ does not admit a limit as $r\to +\infty$.
By direct computation, we see that
	\begin{align} \label{eq:chi-prime}
		\chi^\prime(r)=\frac{e^{u(r)}[({u^\prime}(r))^2-u^{\prime\prime}(r)]}{({u^\prime}(r))^2}  \quad \text{for any } r>0.
	\end{align}

We may assume that $\chi$ admits infinitely many local maxima and minima at some points $r_m$, with $r_m\rightarrow+\infty$ as $m\rightarrow+\infty$, and $\chi(r_m)$ does not admit a limit as $m\to +\infty$.

In particular by \eqref{eq:chi-prime} we have $(u'(r_m))^2-u''(r_m)=0$ for any $m$. Hence, evaluating \eqref{et_eq_a_1_model} at $r_m$, we obtain
	\begin{align*}
		-u^\prime(r_m)-(N-1)\frac{\psi^\prime(r_m)}{\psi(r_m)}=\chi(r_m)\,,
	\end{align*}
	for any $m$. Now, by \eqref{A3} and Lemma \ref{l:u'-0}, we find that $\chi(r_m)\rightarrow -(N-1)\Lambda$ as $m\rightarrow +\infty$, a contradiction. This completes the proof of the lemma in the case $\Lambda \in (0,+\infty)$.

{\bf The case $\Lambda=+\infty$.} We proceed similarly to the previous case by writing \eqref{regular_ode_model} in the form 	\begin{align} \label{eqknown}
        -\frac{u^{\prime\prime}(r)}{u^\prime(r)}\frac{\psi(r)}{\psi^\prime(r)}-(N-1)
        =\frac{e^{u(r)}}{u^\prime(r)}\frac{\psi(r)}{\psi^\prime(r)}
	\end{align}
and by defining this time $\chi(r):=\frac{e^{u(r)}}{u^\prime(r)}\frac{\psi(r)}{\psi^\prime(r)}$ in such a way that if $\chi$ admits a limit as $r\rightarrow +\infty$ then the same conclusion occurs for $\frac{u^{\prime\prime}(r)}{u^\prime(r)}\frac{\psi(r)}{\psi^\prime(r)}$ and we complete the proof of the lemma also in this case.
By contradiction, assume that $\chi$ does not admit a limit as $r\to +\infty$. A simple computation gives
	\begin{align} \label{eq:chi'}
		\chi^\prime(r)=\chi(r)\left\{u^\prime(r)-\frac{u^{\prime\prime}(r)}{u^\prime(r)}
         -\left[\log\left(\frac{\psi^\prime(r)}{\psi(r)}\right)\right]^\prime\right\} \qquad \text{for any } r>0.
	\end{align}
We may assume that $\chi$ admits infinitely many local maxima and minima at some points $r_m$ with $r_m \rightarrow +\infty$ as $m\rightarrow+\infty$, and $\chi(r_m)$ does not admit a limit as $m\to +\infty$. In particular, by \eqref{eq:chi'}, we have that
\begin{align} \label{eq:id-00}
   & u^\prime(r_m)-\frac{u^{\prime\prime}(r_m)}{u^\prime(r_m)}
         -\left[\log\left(\frac{\psi^\prime(r_m)}{\psi(r_m)}\right)\right]^\prime =0 \, .
\end{align}
Hence, evaluating \eqref{eqknown} at $r_m$ and using \eqref{eq:id-00}, we obtain
	\begin{align*}
		-(N-1)-\frac{\psi(r_m)u^\prime(r_m)}{\psi^\prime(r_m)}+\frac{\psi(r_m)}{\psi^\prime(r_m)}\left[\log
         \left(\frac{\psi^\prime(r_m)}{\psi(r_m)}\right)\right]^\prime=\chi(r_m) \, ,
	\end{align*}
for any $m$.

Finally, by Lemma \ref{l:u'-0}, \eqref{A3} and \eqref{A6}, we find that $\chi(r_m)\rightarrow -(N-1)$ for $m\rightarrow +\infty$, a contradiction. The proof of the lemma is complete also in this case.
\end{proof}

Our next purpose is to show that the limit in Lemma \ref{limitexist} must be $0$ under the additional assumption:
\begin{equation}\label{eq:lim-u-infty}
   \lim_{r \to +\infty} u(r)=-\infty \, .
\end{equation}
In Lemma \ref{l:integrable} below we discuss the occurrence of \eqref{eq:lim-u-infty} and we provide a sufficient condition for \eqref{eq:lim-u-infty} in terms of the integrability properties of the ratio $\psi/\psi'$.

Before proving in Lemma \ref{limitinfty} that the limit in Lemma \ref{limitexist} is zero when \eqref{eq:lim-u-infty} holds, we state the following result which deals with the behaviour of $\psi$ at infinity.

\begin{lemma}
Let $\Lambda$ be as in \eqref{A3}, the following statements hold true:

\begin{itemize}
  \item[$(i)$] for any $\Lambda \in (0,+\infty]$ we have that
  \begin{align} \label{A5}
	    \lim_{r\rightarrow +\infty} \psi(r)=+\infty
	\end{align}
 and moreover for any $M>0$ and $0<\delta<N-1$ we have that
     \begin{align}\label{A7}
	    \psi^{-(N-1)+\delta}\in L^1(M,\infty) \, ;
    \end{align}

   \item[$(ii)$] if $\Lambda\in (0,+\infty)$ then there exist $C>0$ and $M>0$ such that
       \begin{equation}\label{eq:est-basso}
          \psi(r)>C e^{\frac \Lambda 2 \, r} \qquad \text{for any } r>M \, .
       \end{equation}
\end{itemize}

\end{lemma}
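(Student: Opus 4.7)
The plan is to derive an exponential lower bound on $\psi$ directly from assumption \eqref{A3}, from which both statements will follow by elementary estimates.

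First I would fix an auxiliary rate: pick any $c \in (0,\Lambda)$ when $\Lambda$ is finite, or any $c>0$ when $\Lambda = +\infty$. By \eqref{A3} there exists $M_0 = M_0(c) > 0$ such that
\begin{equation*}
(\log\psi(r))' = \frac{\psi'(r)}{\psi(r)} > c \qquad \text{for every } r > M_0,
\end{equation*}
and integrating on $[M_0, r]$ yields
\begin{equation*}
\psi(r) > \psi(M_0)\, e^{-cM_0}\, e^{cr} \qquad \text{for every } r > M_0.
\end{equation*}

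This single estimate makes \eqref{A5} immediate by letting $r \to +\infty$. For \eqref{A7}, fix $M>0$ and $0<\delta<N-1$; raising the previous bound to the negative power $-(N-1)+\delta$ gives
\begin{equation*}
\psi(r)^{-(N-1)+\delta} \le C_1\, e^{-c(N-1-\delta)\, r} \qquad \text{for } r > M_0,
\end{equation*}
with $C_1 = \bigl[\psi(M_0) e^{-cM_0}\bigr]^{-(N-1-\delta)}$. Since $N-1-\delta > 0$, the right-hand side is integrable on $(M_0,+\infty)$; on the bounded remainder $[M, M_0]$ (if $M<M_0$), continuity and strict positivity of $\psi$ granted by \eqref{A1} make $\psi^{-(N-1)+\delta}$ bounded, hence integrable. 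This settles (i). Statement (ii) then corresponds to the admissible choice $c = \Lambda/2$, which requires exactly the finiteness and positivity of $\Lambda$; the constants $C = \psi(M_0) e^{-\Lambda M_0/2}$ and $M = M_0$ furnish the claim \eqref{eq:est-basso}.

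No genuine obstacle arises: the argument is a one-shot Gronwall-type comparison for $\log \psi$. The only subtlety worth flagging is the case $\Lambda = +\infty$ in part (i), where one must choose the rate $c$ \emph{first} and then determine the threshold $M_0$; the resulting exponential lower bound then holds with an arbitrary positive rate, which is nonetheless sufficient to guarantee the required integrability.
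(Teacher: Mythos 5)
Your argument is correct, and it is slightly cleaner than the paper's. Both proofs rest on the same basic observation that \eqref{A3} forces $[\log\psi]'$ to exceed a positive constant $c$ beyond some threshold, and your treatment of \eqref{A5} and of part $(ii)$ coincides with the paper's (which integrates the two-sided bound $\Lambda-\varepsilon<[\log\psi]'<\Lambda+\varepsilon$ and takes $\varepsilon=\Lambda/2$). The genuine difference is in \eqref{A7}: the paper splits into cases, proving integrability for $\Lambda=+\infty$ via the substitution $s=\psi(r)$, which turns $\int \psi^{-(N-1-\delta)}\,{\rm d}r$ into $\int s^{-(N-1-\delta)}/\psi'(\psi^{-1}(s))\,{\rm d}s$ and then uses $\psi'>M\psi$ to reduce to $\int s^{-(N-\delta)}\,{\rm d}s$, while deducing the finite-$\Lambda$ case from $(ii)$. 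You instead observe that the single exponential lower bound $\psi(r)>C_0e^{cr}$, valid for any $c\in(0,\Lambda)$ regardless of whether $\Lambda$ is finite, already gives $\psi^{-(N-1-\delta)}\le C_1e^{-c(N-1-\delta)r}$ and hence integrability at infinity in one stroke; the contribution of the compact piece $[M,M_0]$ is handled by continuity and positivity of $\psi$ from \eqref{A1}, a point you correctly do not forget. Your version buys a uniform treatment of both regimes of $\Lambda$ and avoids the change of variables (and the implicit use of invertibility of $\psi$, which the paper's substitution relies on via \eqref{A2}); the paper's version buys nothing extra here, so your simplification is a legitimate improvement in exposition. Your flagged subtlety --- choosing the rate $c$ before the threshold $M_0$ when $\Lambda=+\infty$ --- is exactly the right point to be careful about, and you handle it correctly.
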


\begin{proof} Let us start with the proof of $(i)$. First of all, we observe that \eqref{A3} also reads $\lim_{r\rightarrow +\infty}\left[\log(\psi(r))\right]'=\Lambda>0$ and \eqref{A5} follows by writing
$$
  \log(\psi(r))=\int_R^r \left[\log(\psi(s))\right]'\, ds+\log(\psi(R))
$$
for some $R>0$ and letting $r \to +\infty$.

Let us proceed with the proof of \eqref{A7} when $\Lambda= + \infty$. By \eqref{A3} it follows that there exists $R>0$ such that $\psi'(r)> M \psi(r)$ for any $r>R$, for some positive constant $M>0$. This implies
\begin{align*}
    & \int_R^{+\infty} \frac{1}{(\psi(r))^{N-1-\delta}} \, {\rm d}r
     =\int_{\psi(R)}^{+\infty} \frac{1}{s^{N-1-\delta} \, \psi'(\psi^{-1}(s))} \, {\rm d}s  \\[10pt]
   & \qquad   <\frac 1M \int_{\psi(R)}^{+\infty} \frac{1}{s^{N-1-\delta}\, \psi(\psi^{-1}(s))} \, {\rm d}s
    =\frac 1M \int_{\psi(R)}^{+\infty} \frac{1}{s^{N-\delta} }\, {\rm d}s< +\infty \, .
\end{align*}
This proves \eqref{A7} and completes the proof of $(i)$ when $\Lambda= + \infty$. The validity of \eqref{A7} when $\Lambda< + \infty$ is an easy consequence of statement $(ii)$.

Let us proceed with the proof of $(ii)$. By \eqref{A3} we have that for any $\varepsilon>0$ there exists $r_\varepsilon>0$ such that
\begin{equation*}
  \Lambda-\varepsilon<[\log(\psi(r))]'<\Lambda+\varepsilon \qquad \text{for any } r>r_\varepsilon \, .
\end{equation*}
After integration we get
\begin{equation*}
   \psi(r_\varepsilon) e^{(\Lambda-\varepsilon)(r-r_\varepsilon)}<\psi(r)<\psi(r_\varepsilon) e^{(\Lambda+\varepsilon)(r-r_\varepsilon)} \qquad \text{for any } r>r_\varepsilon \, .
\end{equation*}
The proof of \eqref{eq:est-basso} now follows choosing $\varepsilon=\frac \Lambda 2$ and $C=\psi(r_\varepsilon) e^{-\frac{\Lambda}{2} \, r_\varepsilon}$ in the left inequality above.
\end{proof}

\medskip
\begin{lemma}\label{limitinfty}
	Let $u$ be the unique solution of \eqref{regular_ode_model} and suppose that $u$ satisfies \eqref{eq:lim-u-infty}. Then we have
	\begin{align*}
		\lim_{r\rightarrow +\infty}\frac{u^{\prime\prime}(r)}{u^\prime(r)}\frac{\psi(r)}{\psi^\prime(r)} =0\,.
	\end{align*}
\end{lemma}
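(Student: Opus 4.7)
The plan is to apply Lemma \ref{limitexist}, which guarantees that
\[ L := \lim_{r\to +\infty} \frac{u''(r)}{u'(r)}\,\frac{\psi(r)}{\psi'(r)} \]
exists in $[-\infty, +\infty]$, and then to show $L=0$ by ruling out both $L>0$ and $L<0$. The core observation is that $(\log|u'(r)|)' = u''(r)/u'(r)$ (recall $u'<0$ by Proposition \ref{first_facts}) and $(\log\psi(r))' = \psi'(r)/\psi(r)$, so an asymptotic comparison between $u''/u'$ and $\psi'/\psi$ integrates into a power-type estimate on $|u'|$ in terms of $\psi$.

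Assuming $L>0$, I would fix some $\epsilon\in (0,L)$ (taking for instance $\epsilon=1$ if $L=+\infty$) so that $u''(r)/u'(r) > \epsilon\,\psi'(r)/\psi(r)$ on a half-line $(r_0,+\infty)$. Integrating this inequality yields $|u'(r)|\ge c\,\psi(r)^\epsilon$ for $r>r_0$, with $c>0$. Combining with \eqref{A5}, which gives $\psi(r)\to+\infty$, we obtain $|u'(r)|\to +\infty$, in contradiction with Lemma \ref{l:u'-0}.

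Assuming instead $L<0$, I would choose $\delta\in (0,N-1)$ small enough that $u''(r)/u'(r) < -\delta\,\psi'(r)/\psi(r)$ on some $(r_0,+\infty)$; concretely $\delta=\min\{|L|/2,\,(N-1)/2\}$ when $L$ is finite, and any $\delta\in (0,N-1)$ when $L=-\infty$. Integrating gives $|u'(r)|\le C\,\psi(r)^{-\delta}$ for $r>r_0$. The crucial input is then \eqref{A7}: since $\delta\in (0,N-1)$, $\psi^{-\delta}\in L^1(r_0,+\infty)$, hence $u'\in L^1(r_0,+\infty)$. This forces $u(r)=u(0)+\int_0^r u'(s)\,{\rm d}s$ to converge to a finite limit as $r\to +\infty$, contradicting the standing hypothesis \eqref{eq:lim-u-infty}.

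The only remaining possibility is $L=0$. The main technical point will be the case $L<0$: one must ensure that the exponent $\delta$ produced by integration lies strictly in the range $(0,N-1)$ where \eqref{A7} is applicable, which is why it is preferable to work with a small $\delta$ rather than with the exact rate $|L|$; this also allows the very same argument to cover uniformly the sub-case $L=-\infty$, where no finite rate is available at all.
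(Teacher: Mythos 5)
Your argument is correct, and it reorganizes the proof in a way that is genuinely more direct than the paper's. The paper does not argue on the sign of the limit $L$; instead it passes to the reciprocal $\lim \frac{u'\psi'}{u''\psi}$ (after first disposing of the case where $u''$ vanishes infinitely often), and in each problematic sub-case it feeds the assumption back into the ODE, via de l'H\^opital's rule applied to $e^{u}/u'$, to force the limit of $\frac{u''\psi}{u'\psi'}$ to equal exactly $-(N-1)$; only then does it integrate $[\log(-u')]'\le[-(N-1)+\varepsilon][\log\psi]'$ and invoke \eqref{A7} to contradict \eqref{eq:lim-u-infty}. You skip the identification of the value $-(N-1)$ altogether: any $L>0$ already contradicts $u'(r)\to 0$ (Lemma \ref{l:u'-0}) after one integration, since $|u'|\ge c\,\psi^{\epsilon}\to+\infty$ by \eqref{A5}, and any $L<0$ contradicts \eqref{eq:lim-u-infty} because $|u'|\le C\psi^{-\delta}$ is integrable at infinity by \eqref{A7} once $\delta\in(0,N-1)$. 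Both integration steps do appear in the paper, but scattered (the lower bound on $|u'|$ is the device used in the proof of Lemma \ref{l:integrable}$(ii)$, the upper bound is Step 2 of the paper's proof of this lemma), so your dichotomy unifies them and handles $L=\pm\infty$ with no extra work. What the paper's longer route buys is the precise asymptotic constant $-(N-1)$, which it needs anyway in Lemma \ref{sol_asymptotic_model_infty} to get the exact decay rate; your proof of the present lemma does not deliver that constant, but it is not required for the statement at hand. Your care in choosing $\delta$ strictly inside $(0,N-1)$ (rather than using $|L|$ itself) is exactly the right precaution for \eqref{A7} to apply.
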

\begin{proof}
	If $u^{\prime\prime}$ vanishes infinitely many times at infinity, then by Lemma \ref{limitexist} we are done. If this does not occur, using again Lemma \ref{limitexist} and, recalling Proposition \ref{first_facts} and assumptions \eqref{A1}-\eqref{A2}, we infer that the following limit exists
$$
   \lim_{r\rightarrow +\infty}\frac{u^\prime(r)\psi^\prime(r)}{u^{\prime\prime}(r)\psi(r)}=:L\in [-\infty,+\infty] \, .
$$
The proof of the lemma now follows if we prove that $|L|=+\infty$.

We divide the remaining part of the proof into two steps.

\medskip
	
{\bf Step 1.} We show that $L\neq 0$. By contradiction, assume $L=0$. Notice that by Lemma \ref{l:u'-0}, \eqref{eq:lim-u-infty}, de l'H\^opital's rule and \eqref{A3}
	\begin{align*}
	\lim_{r\rightarrow +\infty}\frac{e^{u(r)}}{u^{\prime}(r)}=\lim_{r\rightarrow +\infty}\frac{u^{\prime}(r) e^{u(r)}}{u^{\prime\prime}(r)} =\lim_{r\rightarrow +\infty}\frac{u^{\prime}(r)\psi'(r) }{u^{\prime\prime}(r) \psi(r)}\, e^{u(r)}\, \frac{\psi(r)}{\psi'(r)} =0
	\end{align*}
by which, from \eqref{A3}, we readily get
	\begin{align}\label{T_et_1_model_infty}
	   \lim_{r\rightarrow +\infty}\frac{e^{u(r)}\psi(r)}{u^{\prime}(r)\psi^\prime(r)}=0 \, .
	\end{align}
Recalling \eqref{eqknown}, by \eqref{T_et_1_model_infty} we deduce that
	\begin{align*}
		\lim_{r\rightarrow +\infty}\frac{u^{\prime\prime}(r)}{u^\prime(r)}\frac{\psi(r)}{\psi^\prime(r)}=-(N-1).
	\end{align*}
	This yields $\lim_{r\rightarrow +\infty}\frac{u^\prime(r)\psi^\prime(r)}{u^{\prime\prime}(r)\psi(r)}=-\frac{1}{N-1}$, a contradiction.

This completes the proof of Step 1.

\medskip
	
	{\bf Step 2.} We now prove that $L$ cannot be finite. Thanks to Step 1, we may assume by contradiction that $L\in \mathbb{R}\setminus\{0\}$. Arguing as in Step 1 we get that \eqref{T_et_1_model_infty} also holds in this case and, in turn, we obtain that $L=-\frac{1}{N-1}$. Therefore, for any $\varepsilon>0$ there exists $r_\varepsilon>$ such that for any $r>r_\varepsilon$
	$$
       -(N-1)-\varepsilon\leq \frac{u^{\prime\prime}(r)\psi(r)}{u^\prime(r)\psi^\prime(r)} \leq -(N-1)+\varepsilon \, ,
    $$
whence
    $$ [-(N-1)-\varepsilon]\, [\log(\psi(r))]^\prime\leq [\log (-u^\prime(r))]^\prime\leq
       [-(N-1)+\varepsilon]\, [\log(\psi(r))]^\prime \, .
    $$
Integrating from $r_\varepsilon$ to $r$, with $r>r_\varepsilon$, we deduce that
$$
   \log \bigg(\frac{-u^\prime(r)}{-u^\prime(r_\varepsilon)}\bigg)\leq [-(N-1)+\varepsilon]\log\bigg(\frac{\psi(r)}{\psi(r_\varepsilon)}\bigg)$$
and, in turn, that
	$$ -u^\prime(r)\leq A_\varepsilon (\psi(r))^{-(N-1)+\varepsilon}\,,
$$
where $A_\varepsilon=\frac{-u^\prime(r_\varepsilon)}{(\psi(r_\varepsilon))^{-(N-1)+\varepsilon}}$ is a positive constant. By a further integration, for any $r>r_\varepsilon$, we obtain
\begin{align} \label{eq:up-est}
	-u(r)+u(r_\varepsilon)\leq A_\varepsilon \int_{r_\varepsilon}^{r}(\psi(s))^{-(N-1)+\varepsilon}\,{\rm d}s \, .
\end{align}
By \eqref{A7} we deduce that in both cases $\Lambda=+\infty$ and $\Lambda \in (0,+\infty)$, the function
$$
  (\psi(s))^{-(N-1)+\varepsilon} \in L^1(r_\varepsilon,+\infty)
$$
provided that $\varepsilon<N-1$. This means that the right-hand side of \eqref{eq:up-est} admits a finite limit as $r \to +\infty$ and this is absurd since the left-hand side of \eqref{eq:up-est} blows up to $+\infty$ in view of \eqref{eq:lim-u-infty}.
\end{proof}

Finally, we determine the exact asymptotic behaviour at infinity for solutions of \eqref{regular_ode_model} satisfying \eqref{eq:lim-u-infty}.

\begin{lemma}\label{sol_asymptotic_model_infty}
	Let $u$ be the unique solution of \eqref{regular_ode_model} and suppose that $u$ satisfies \eqref{eq:lim-u-infty}. Then
	\begin{align}\label{decay_est_model_infty}
		\lim_{r\rightarrow +\infty}\frac{e^{-u(r)}}{\int_{0}^{r}\frac{\psi(s)}{\psi^\prime(s)}{\rm d}s}=\frac{1}{N-1}.
	\end{align}
	Consequently, the following decay estimate holds
	\begin{align}\label{rate_decay_est_model_infty}
		\lim_{r\rightarrow +\infty}\frac{u(r)}{\log\big(\int_{0}^{r}\frac{\psi(s)}{\psi^\prime(s)}{\rm d}s\big)}=-1.
	\end{align}
In particular if $\Lambda \in (0,+\infty)$ then
   \begin{equation} \label{eq:rate-L-finite}
       \lim_{r\rightarrow +\infty}\frac{u(r)}{\log r}=-1
   \end{equation}
\end{lemma}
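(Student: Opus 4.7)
The plan is to derive an algebraic identity from the ODE which forces the quantity $-u'(r)e^{-u(r)}\psi'(r)/\psi(r)$ to have an explicit limit, and then to upgrade this pointwise limit to an integral asymptotics via an $\ell'$H\^opital argument; the assertions about the logarithmic rate then follow by trivial manipulations.

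First I would set
$$
  g(r):=-u'(r)\,e^{-u(r)}\,\frac{\psi'(r)}{\psi(r)},\qquad h(r):=\frac{u''(r)}{u'(r)}\,\frac{\psi(r)}{\psi'(r)}.
$$
Multiplying \eqref{regular_ode_model} by $e^{-u(r)}$ gives $-u''(r)e^{-u(r)}-(N-1)\tfrac{\psi'(r)}{\psi(r)}u'(r)e^{-u(r)}=1$, i.e.\ $(N-1)g(r)-u''(r)e^{-u(r)}=1$. Because $u'(r)e^{-u(r)}=-g(r)\,\psi(r)/\psi'(r)$, one obtains $u''(r)e^{-u(r)}=\frac{u''(r)}{u'(r)}\,u'(r)e^{-u(r)}=-g(r)h(r)$, so the ODE collapses to the algebraic identity
$$
  g(r)\bigl(N-1+h(r)\bigr)=1 \qquad\text{for all large } r.
$$
By Lemma \ref{limitinfty}, $h(r)\to 0$ as $r\to+\infty$, so $g(r)\to\tfrac{1}{N-1}$.

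Next, to derive \eqref{decay_est_model_infty}, put $A(r):=e^{-u(r)}$ and $B(r):=\int_0^r\psi(s)/\psi'(s)\,ds$, so that $A'(r)/B'(r)=g(r)\to 1/(N-1)$ and $A(r)\to+\infty$ by hypothesis \eqref{eq:lim-u-infty}. Before applying l'H\^opital, I must check that $B(r)\to+\infty$: fix $\varepsilon\in(0,\tfrac{1}{N-1})$ and choose $r_\varepsilon$ so that $A'(r)\ge\bigl(\tfrac{1}{N-1}-\varepsilon\bigr)B'(r)$ for $r>r_\varepsilon$; integration yields $A(r)\ge A(r_\varepsilon)+\bigl(\tfrac{1}{N-1}-\varepsilon\bigr)(B(r)-B(r_\varepsilon))$, so a bounded $B$ would force a bounded $A$, a contradiction. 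With $B(r)\to+\infty$, a standard l'H\^opital argument (using both one-sided estimates $(\tfrac{1}{N-1}\pm\varepsilon)B'(r)$ on $A'(r)$) gives $A(r)/B(r)\to 1/(N-1)$, which is exactly \eqref{decay_est_model_infty}.

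Finally, taking logarithms in \eqref{decay_est_model_infty} yields $-u(r)=\log B(r)-\log(N-1)+o(1)$; dividing by $\log B(r)\to+\infty$ gives \eqref{rate_decay_est_model_infty}. When $\Lambda\in(0,+\infty)$, assumption \eqref{A3} forces $\psi(s)/\psi'(s)\to 1/\Lambda$, whence $B(r)=r/\Lambda+o(r)$ and therefore $\log B(r)/\log r\to 1$, from which \eqref{eq:rate-L-finite} follows immediately. The main substantive step is the first one: once one notices that the ODE multiplied by $e^{-u}$ can be rewritten as a one-variable equation in $g$ with a coefficient $h$ controlled by Lemma \ref{limitinfty}, the remainder is l'H\^opital plus logarithms; the only minor subtlety is the a priori verification that $B\to+\infty$, which I handle by integrating the sharp lower bound on $A'/B'$.
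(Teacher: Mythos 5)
Your proposal is correct and follows essentially the same route as the paper: your algebraic identity $g(r)\bigl(N-1+h(r)\bigr)=1$ is just a rewriting of \eqref{eqknown}, so the key limit $(e^{-u})'\,\psi'/\psi\to\tfrac{1}{N-1}$ is obtained exactly as in the paper from Lemma \ref{limitinfty}, and the subsequent integration/l'H\^opital step and the logarithmic manipulations (including the check that $\int_0^r\psi/\psi'$ must diverge, which the paper phrases as non-integrability of $\psi/\psi'$) coincide with the paper's argument.
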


\begin{proof}
	By \eqref{regular_ode_model}, Lemma \ref{limitexist} and Lemma \ref{limitinfty}, we get
	\begin{align*}
		 \lim_{r\rightarrow +\infty} -\frac{e^{u(r)}\psi(r)}{u^\prime(r)\psi^\prime(r)}=N-1 \, .
	\end{align*}
	Therefore, for any $\varepsilon>0$ there exists $r_\varepsilon>0$ such that
	$$
        \bigg(\frac{1}{N-1}-\varepsilon\bigg)\frac{\psi(r)}{\psi^\prime(r)}<\big(e^{-u(r)}\big)^\prime<
        \bigg(\frac{1}{N-1}+\varepsilon\bigg)\frac{\psi(r)}{\psi^\prime(r)} \, .
    $$
Integrating from $r_\varepsilon$ to $r$, for any $r>r_\varepsilon$ we get
\begin{align} \label{eq:est-psi-psi'}
	e^{-u(r_\varepsilon)}+\bigg(\frac{1}{N-1}-\varepsilon\bigg)\int_{r_\varepsilon}^r\frac{\psi(s)}{\psi^\prime(s)}\,
    {\rm d}s<e^{-u(r)}<e^{-u(r_\varepsilon)}+\bigg(\frac{1}{N-1}+\varepsilon\bigg)
    \int_{r_\varepsilon}^r\frac{\psi(s)}{\psi^\prime(s)}\,{\rm d}s.
\end{align}
Now, if $\frac{\psi}{\psi^\prime}$ is integrable in a neighbourhood of infinity, we reach a contradiction with \eqref{eq:lim-u-infty}, therefore $\frac{\psi}{\psi^\prime}$ has to be not integrable in a neighbourhood of infinity and we obtain
\begin{align*}
	\lim_{r\rightarrow +\infty}\frac{e^{-u(r)}}{\int_{r_\varepsilon}^r\frac{\psi(s)}{\psi^\prime(s)}\,{\rm d}s}=\frac{1}{N-1}.
\end{align*}
Then, \eqref{decay_est_model_infty} readily follows from the above limit by recalling \eqref{A1}. The limit \eqref{rate_decay_est_model_infty} follows from \eqref{eq:est-psi-psi'} with a similar argument.

It remains to prove \eqref{eq:rate-L-finite} when $\Lambda \in (0,+\infty)$. We proceed by considering the limit:
\begin{align*}
   & \lim_{r \to +\infty}  \frac{\log\left(\int_0^r \frac{\psi(s)}{\psi'(s)} \, {\rm d}s\right)}{\log r}
   =\lim_{r \to +\infty} \frac{\psi(r)}{\psi'(r)} \frac{r}{\int_0^r \frac{\psi(s)}{\psi'(s)} \, {\rm d}s}
   =\frac{1}{\Lambda} \, \lim_{r \to +\infty} \frac{r}{\int_0^r \frac{\psi(s)}{\psi'(s)} \, {\rm d}s} \\[10pt]
   & \qquad =\frac{1}{\Lambda} \, \lim_{r \to +\infty} \frac{\psi'(r)}{\psi(r)}=\frac{1}{\Lambda} \cdot \Lambda=1 \, ,
\end{align*}
where we used twice de l'H\^opital rule.

This proves that $\log\left(\int_0^r \frac{\psi(s)}{\psi'(s)} \, {\rm d}s\right)\sim \log r$ as $r \to +\infty$ and the proof of \eqref{eq:rate-L-finite} follows from \eqref{rate_decay_est_model_infty}.

\end{proof}

At last, we provide a sufficient condition for \eqref{eq:lim-u-infty} in terms of integrability properties of the ratio $\psi/\psi'$.

\begin{lemma} \label{l:integrable}
  Let $u$ be the unique solution of \eqref{regular_ode_model}. Then the two alternatives hold:

  \begin{itemize}
    \item[$(i)$] if $\frac{\psi}{\psi'} \in L^1(0,\infty)$ then
       \begin{equation*}
           \lim_{r \to +\infty} u(r)\in (-\infty,\alpha) \, ;
       \end{equation*}

     \item[$(ii)$] if $\frac{\psi}{\psi'} \not\in L^1(0,\infty)$ then
       \begin{equation*}
           \lim_{r \to +\infty} u(r)=-\infty \, .
       \end{equation*}
  \end{itemize}
\end{lemma}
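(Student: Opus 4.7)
My plan is to prove both assertions by contradiction, exploiting results already obtained in the present section. For part $(i)$, assume $\lim_{r\to+\infty}u(r)=-\infty$. Then Lemma \ref{sol_asymptotic_model_infty} applies and, in particular, the two-sided estimate \eqref{eq:est-psi-psi'} obtained inside its proof is available: for any sufficiently small $\varepsilon>0$ there exists $r_\varepsilon>0$ such that $e^{-u(r)}\le e^{-u(r_\varepsilon)}+\bigl(\frac{1}{N-1}+\varepsilon\bigr)\int_{r_\varepsilon}^{r}\frac{\psi(s)}{\psi'(s)}\,{\rm d}s$ for every $r>r_\varepsilon$. If $\psi/\psi'\in L^1(0,+\infty)$, the right-hand side stays uniformly bounded as $r\to+\infty$, so $e^{-u(r)}$ is bounded and $u$ is bounded below, contradicting $u(r)\to -\infty$. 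The strict upper bound in $\lim u\in(-\infty,\alpha)$ then follows from the strict monotonicity of $u$ given by Proposition \ref{first_facts}.

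For part $(ii)$, suppose for contradiction that $l:=\lim_{r\to+\infty}u(r)>-\infty$, so that $e^{u(r)}\to e^{l}>0$, and split according to whether $\Lambda\in(0,+\infty)$ or $\Lambda=+\infty$. If $\Lambda\in(0,+\infty)$, Lemma \ref{l:u'-0} gives $u'(r)\to 0$ and, since $\psi'/\psi\to\Lambda$ is finite, the product $(\psi'/\psi)\,u'$ tends to zero. Passing to the limit in the identity $u''(r)=-(N-1)\frac{\psi'(r)}{\psi(r)}u'(r)-e^{u(r)}$, which rewrites \eqref{regular_ode_model}, one gets $u''(r)\to -e^{l}<0$. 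But then $u'(r)=u'(0)+\int_0^{r}u''(s)\,{\rm d}s\to -\infty$, contradicting $u'(r)\to 0$.

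If instead $\Lambda=+\infty$, assumption \eqref{A6} reads $|\psi''/\psi'-\psi'/\psi|=O(1)$, and multiplication by $\psi/\psi'\to 0$ gives $\psi(r)\psi''(r)/(\psi'(r))^2\to 1$. Integrating the ODE in divergence form yields $-\psi(r)^{N-1}u'(r)=G(r):=\int_0^{r}\psi(s)^{N-1}e^{u(s)}\,{\rm d}s$, and l'H\^opital's rule, applied to an $\infty/\infty$ indeterminate form, gives
\begin{equation*}
    \lim_{r\to+\infty}\frac{G(r)}{\psi(r)^N/\psi'(r)}=\lim_{r\to+\infty}\frac{e^{u(r)}}{N-\psi(r)\psi''(r)/(\psi'(r))^2}=\frac{e^{l}}{N-1}\,.
\end{equation*}
Therefore $-u'(r)=G(r)/\psi(r)^{N-1}$ is asymptotically equivalent to $\frac{e^{l}}{N-1}\cdot\psi(r)/\psi'(r)$, so that $-u'(r)\ge\frac{e^{l}}{2(N-1)}\psi(r)/\psi'(r)$ for $r$ large; integrating and using $\psi/\psi'\notin L^1(0,+\infty)$, one obtains $u(0)-l=\int_0^{+\infty}-u'(r)\,{\rm d}r=+\infty$, contradicting $l>-\infty$. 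The main technical obstacle is justifying the l'H\^opital step, namely verifying that $\psi(r)^N/\psi'(r)\to +\infty$ so that the indeterminate form is genuinely $\infty/\infty$: this follows by combining \eqref{A6}, which forces $\log(\psi'(r)/\psi(r))=O(r)$, with the superlinear growth of $\log\psi(r)$ coming from $(\log\psi)'(r)=\psi'(r)/\psi(r)\to+\infty$.
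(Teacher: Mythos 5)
Your proof is correct. Part $(i)$ is essentially the paper's own argument: assume $u\to-\infty$, invoke the estimate \eqref{eq:est-psi-psi'} (whose derivation only needs \eqref{eq:lim-u-infty}), and let the integrability of $\psi/\psi'$ force boundedness of $e^{-u}$. Part $(ii)$, however, takes a genuinely different route. The paper never splits on $\Lambda$: it uses the fact, extracted from the proof of Lemma \ref{limitexist}, that $\frac{e^{u}}{u'}\frac{\psi}{\psi'}$ admits a limit, shows via the non-integrability hypothesis that $\frac{\psi}{u'\psi'}\to-\infty$, feeds this into \eqref{eqknown} to get $\frac{u''}{u'}\frac{\psi}{\psi'}\to+\infty$, and integrates to obtain $|u'(r)|\gtrsim(\psi(r))^M\to+\infty$, contradicting Lemma \ref{l:u'-0}. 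You instead argue directly on the ODE: for finite $\Lambda$ the limit $u''\to-e^{l}<0$ immediately kills $u'\to0$, and for $\Lambda=+\infty$ you use \eqref{A6} only through the consequence $\psi\psi''/(\psi')^2\to1$ to run l'H\^opital on $-(\psi(r))^{N-1}u'(r)=\int_0^r(\psi(s))^{N-1}e^{u(s)}\,{\rm d}s$, obtaining the sharp asymptotic $-u'(r)\sim\frac{e^{l}}{N-1}\frac{\psi(r)}{\psi'(r)}$ and then contradicting non-integrability upon integration. Your version bypasses Lemma \ref{limitexist} entirely and yields more quantitative information under the (false) hypothesis, at the cost of a case distinction and of having to verify that $\psi^N/\psi'\to+\infty$ to legitimize l'H\^opital — which you do correctly by combining the bound $[\log(\psi'/\psi)]'=O(1)$ with the superlinear growth of $\log\psi$; the paper's version is uniform in $\Lambda$ and recycles machinery already established for the asymptotic analysis. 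Both arguments ultimately lean on \eqref{A6} in the unbounded-curvature case.
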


\begin{proof} The existence of the limit of $u$ as $r\to +\infty$ is known from Proposition \ref{first_facts} as well as the fact that this limit is less than $\alpha$. It remains to prove that the limit is finite in case $(i)$ and $-\infty$ in case $(ii)$.

Let us start with the proof of $(i)$. Suppose by contradiction that the limit is $-\infty$ so that \eqref{eq:lim-u-infty} holds true. Then we can apply Lemma \ref{limitinfty} and proceed as in the proof of Lemma \ref{sol_asymptotic_model_infty} to obtain \eqref{eq:est-psi-psi'}. The integrability of $\psi/\psi'$ shows that $u$ remains bounded as $r\to +\infty$, a contradiction.

Let us proceed with the proof of $(ii)$. Set $l_1:=\lim_{r \to +\infty} u(r)$. Suppose by contradiction that $l_1$ is finite. We claim that
\begin{equation} \label{eq:claim-psi-u'}
   \lim_{r \to +\infty} \frac{\psi(r)}{u'(r) \psi'(r)}=-\infty \, .
\end{equation}

From the proof of Lemma \ref{limitexist} we know that the function $\frac{e^{u(r)}}{u'(r)} \frac{\psi(r)}{\psi'(r)}$ admits a limit as $r\to +\infty$, hence, since
$e^{-u(r)}\to e^{-l_1}\in (0,+\infty)$ as $r\to +\infty$, the limit in \eqref{eq:claim-psi-u'} exists and it belongs to $[-\infty,0]$, thanks to \eqref{A1}, \eqref{A2} and Proposition \ref{first_facts}.

Let us denote by $l_2\leq 0$ the limit in \eqref{eq:claim-psi-u'} and suppose by contradiction that it is finite.

Then for any $\varepsilon>0$ there exists $r_\varepsilon>0$ such that
\begin{equation*}
  l_2-\varepsilon <\frac{\psi(r)}{u'(r) \psi'(r)}<l_2+\varepsilon \qquad \text{for any } r>r_\varepsilon
\end{equation*}
and from the left inequality, it follows that
\begin{equation*}
   0<\frac{\psi(r)}{\psi'(r)}<(l_2-\varepsilon) u'(r) \qquad \text{for any } r>r_\varepsilon \, .
\end{equation*}

Integrating the inequality above we obtain
\begin{equation*}
  0<\int_{r_\varepsilon}^{r} \frac{\psi(s)}{\psi'(s)} \, {\rm d}s<(l_2-\varepsilon) [u(r)-u(r_\varepsilon)]
  \qquad \text{for any } r>r_\varepsilon  \, .
\end{equation*}
Passing to the limit as $r\to +\infty$ and recalling that $l_1$ is finite we infer that $\psi/\psi'$ is integrable at infinity, in contradiction with the assumption in $(ii)$. This completes the proof of \eqref{eq:claim-psi-u'}.

Combining \eqref{eq:claim-psi-u'} and \eqref{eqknown} with the fact that $l_1$ is finite we deduce that
\begin{align*}
   & \lim_{r\to +\infty} \frac{u''(r)}{u'(r)} \frac{\psi(r)}{\psi'(r)}
      =-(N-1)-\lim_{r \to +\infty} \frac{e^{u(r)}}{u'(r)} \frac{\psi(r)}{\psi'(r)}=+\infty \, .
\end{align*}
This implies that for any $M>0$ there exists $r_M>0$ such that
\begin{equation*}
  \frac{u''(r)}{u'(r)} \frac{\psi(r)}{\psi'(r)}>M \qquad \text{for any } r>r_M \, .
\end{equation*}
Multiplying both sides of the above inequality by $\psi'(r)/\psi(r)$ and integrating we obtain
\begin{equation*}
  \log\left(\frac{|u'(r)|}{|u'(r_M)|}\right)>M \log\left(\frac{\psi(r)}{\psi(r_M)}\right) \qquad \text{for any } r>r_M
\end{equation*}
from which it follows that
\begin{equation*}
   |u'(r)|>\frac{|u'(r_M)|}{(\psi(r_M))^M} \, (\psi(r))^M \qquad \text{for any } r>r_M \, .
\end{equation*}
Passing to the limit as $r\to +\infty$ and recalling \eqref{A5}, we conclude that $|u'(r)|\to +\infty$ as $r \to +\infty$ in contradiction with Lemma \ref{l:u'-0}. This completes the proof of $(ii)$.
\end{proof}

\noindent
{\bf End of the proof of Theorem \ref{final}.} 
The proof of $(i)$ in the case $\frac{\psi}{\psi'}\in L^1(0,\infty)$ is an immediate consequence of Lemma \ref{l:integrable} -$(i)$.

Let us proceed with the proof of $(ii)$. First of all by Lemma \ref{l:integrable}-$(ii)$ we have that $u$ diverges to $-\infty$ as $r\to +\infty$. In this way \eqref{eq:lim-u-infty} is satisfied, hence Lemma \ref{sol_asymptotic_model_infty} completes the proof of Theorem \ref{final}.
\endproof


\section{Preliminary results about stability and intersection properties}\label{5}


%
We start by stating an equivalent characterization of stability in the case of radial solutions of \eqref{main_eq}. Since the proof can be obtained by following the proof of \cite[Lemma 5.1]{elvise-14} with obvious changes we omit it.

\begin{lemma}
	Let $\psi$ satisfy \eqref{A1}-\eqref{A3} and let $u$ be a radial solution of \eqref{main_eq}. Then $u$ is stable if and only if
	\begin{align}\label{radial_con}
		\int_{0}^{+\infty}(\chi^\prime(r))^2 \, (\psi(r))^{N-1}\dr
         -\int_{0}^{+\infty}e^{u(r)}(\chi(r))^2 \, (\psi(r))^{N-1}\dr\geq 0,
	\end{align}
for every radial function $\chi\in C_c^\infty(M)$.
\end{lemma}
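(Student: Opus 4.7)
The plan is to follow the approach of \cite[Lemma 5.1]{elvise-14}, using a spherical harmonic decomposition of test functions to reduce the general stability inequality \eqref{stable_def} to its radial counterpart \eqref{radial_con}.

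The necessity direction (``$u$ stable $\Rightarrow$ \eqref{radial_con}'') is immediate: one plugs a radial $\chi \in C_c^\infty(M)$ into \eqref{stable_def}, recalls that $\dvg = (\psi(r))^{N-1}\,\dr\,{\rm d}\omega$ and $|\gradg \chi|_g = |\chi'(r)|$ for radial $\chi$, and integrates out the angular variable, which only contributes the positive multiplicative constant $|\mathbb{S}^{N-1}|$ that can be cancelled.

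For the sufficiency, the key step is to fix any $v \in C_c^\infty(M)$ and expand it in spherical harmonics on $\mathbb{S}^{N-1}$,
\begin{equation*}
v(r,\omega) = \sum_{k=0}^{\infty} v_k(r) \, Y_k(\omega), \qquad -\Delta_\omega Y_k = \mu_k Y_k, \qquad \mu_k = k(k+N-2) \geq 0,
\end{equation*}
with $\{Y_k\}$ an orthonormal basis of $L^2(\mathbb{S}^{N-1})$. Using the polar expressions for $\gradg$ and $\lapg$ recalled in Section \ref{2}, orthonormality on the sphere, a sphere integration by parts based on the eigenvalue relation, and the fact that $u$ is radial (so $e^u$ depends only on $r$), both integrals in \eqref{stable_def} decouple into a sum over the modes:
\begin{align*}
& \int_M |\gradg v|_g^2 \, \dvg - \int_M e^u v^2 \, \dvg \\
& \qquad = \sum_{k=0}^{\infty} \int_0^{+\infty} \Bigl[(v_k'(r))^2 (\psi(r))^{N-1} + \mu_k (\psi(r))^{N-3} v_k(r)^2 - e^{u(r)} v_k(r)^2 (\psi(r))^{N-1}\Bigr] \dr.
\end{align*}
The $k=0$ term coincides with \eqref{radial_con} applied to $\chi=v_0$, which is a bona fide smooth radial test function on $M$. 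For $k \geq 1$, the angular correction $\mu_k (\psi)^{N-3} v_k^2 \geq 0$ is non-negative, so that the mode is again bounded below by the radial quadratic form evaluated on $v_k$, which is non-negative by \eqref{radial_con}. Summing over $k$ gives \eqref{stable_def}.

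The main technical point is the treatment of the non-radial modes $k \geq 1$: for such $k$, $v_k$ viewed as a radial function on $M$ is not strictly a $C_c^\infty(M)$ function near the pole, since smoothness of $v$ at $o$ only forces $v_k(r) = O(r^k)$ as $r \to 0^+$ (rather than smoothness in the variable $r^2$). This is handled by a standard density argument: one multiplies $v_k$ by radial cut-offs vanishing in a shrinking neighbourhood of the pole, obtaining genuine radial test functions to which \eqref{radial_con} applies, and one then passes to the limit using the decay $v_k = O(r^k)$ with $k \geq 1$ to ensure convergence of all integrals involved.
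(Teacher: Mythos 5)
Your proposal is correct and follows essentially the same route as the paper, which omits the proof and refers to \cite[Lemma 5.1]{elvise-14}: the argument there is precisely the spherical-harmonics decomposition you describe, with the non-negative angular term $\mu_k(\psi)^{N-3}v_k^2$ discarded mode by mode and the behaviour of the modes $k\ge 1$ near the pole handled by a cut-off/density argument. Nothing further is needed.
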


We now give the statement of a series of lemmas that will be exploited in the proofs of the main results about stability and intersection properties.

In the sequel, we denote by $u_\alpha$ the unique solution of \eqref{regular_ode_model} with $\alpha=u_\alpha(0)$ and we consider the set
\begin{equation} \label{eq:def-S}
   \mathcal S:=\{\alpha\in \R: u_\alpha \text{ is stable}\} \, .
\end{equation}

	
Before proceeding, we recall the variational characterization of the bottom of the $L^2$ spectrum of $-\Delta_g$ in $M$:

\begin{equation}\label{lambda1}
\lambda_1(M):= \inf_{\varphi \in C^{\infty}_{c}(M)\setminus
\{0\}}\frac{\int_{M} |\nabla_g \varphi |_{g}^2\,dV_{g}}{\int_{M}
\varphi^2\,dV_{g}} \, .
\end{equation}

It is well known that under assumptions \eqref{A1}-\eqref{A3} we have that $\lambda_1(M)>0$, see e.g.,
\cite[Lemma 4.1]{elvise-14}. The bottom of the spectrum is involved in the stability of solutions $u_\alpha$ of \eqref{regular_ode_model} for sufficiently small values of $\alpha$. Indeed, we prove:

\begin{lemma}\label{l6.3}
Let $\psi$ satisfy  \eqref{A1}-\eqref{A3} and let $u_\alpha$ be a solution of \eqref{regular_ode_model} with $\alpha\leq \log(\lambda_1(M))$. Then $u_\alpha$ is stable and in particular, the set $\mathcal S$ is not empty.
\end{lemma}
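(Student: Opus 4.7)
The plan is to prove stability directly from the definition \eqref{stable_def}, using two ingredients that are already available: the monotonicity of $u_\alpha$ established in Proposition \ref{first_facts} and the variational characterization of $\lambda_1(M)$ given in \eqref{lambda1}. There is no need to invoke the radial characterization \eqref{radial_con}, since the argument works for arbitrary test functions $v\in C_c^\infty(M)$.

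First I would observe that, by Proposition \ref{first_facts}, $u_\alpha$ is decreasing on $[0,+\infty)$ with $u_\alpha(0)=\alpha$, hence $u_\alpha(r)\leq \alpha$ for every $r\geq 0$. Since $u_\alpha$ is radial, this translates into the pointwise bound $u_\alpha\leq \alpha$ on all of $M$, and therefore
\begin{equation*}
e^{u_\alpha(x)}\leq e^\alpha\leq \lambda_1(M)\qquad\text{for every }x\in M,
\end{equation*}
where the last inequality uses the hypothesis $\alpha\leq \log(\lambda_1(M))$.

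Next, I would combine this pointwise bound with \eqref{lambda1}: for every $v\in C_c^\infty(M)\setminus\{0\}$,
\begin{equation*}
\int_M e^{u_\alpha}\,v^2\,\dvg \;\leq\; e^\alpha\int_M v^2\,\dvg \;\leq\; \lambda_1(M)\int_M v^2\,\dvg \;\leq\; \int_M |\gradg v|_g^2\,\dvg,
\end{equation*}
which is exactly the stability inequality \eqref{stable_def}. Hence $u_\alpha\in \mathcal S$, and in particular $\mathcal S\neq\emptyset$.

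There is essentially no obstacle here: the only facts needed are the monotonicity of $u_\alpha$ (already proved) and the positivity/characterization of $\lambda_1(M)$ (recalled right before the statement and known to hold under \eqref{A1}--\eqref{A3} by \cite[Lemma 4.1]{elvise-14}). The lemma is a direct consequence and provides the base case from which the more refined stability threshold $\eta$ in Theorem \ref{stability statement} will be extracted later.
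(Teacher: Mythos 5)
Your argument is correct and is essentially identical to the paper's proof: both combine the pointwise bound $e^{u_\alpha}\leq e^\alpha$ from Proposition \ref{first_facts} with the variational characterization \eqref{lambda1} of $\lambda_1(M)$ to verify \eqref{stable_def} directly for arbitrary $v\in C_c^\infty(M)$. The only difference is cosmetic (the paper writes the chain of inequalities starting from the gradient term and carries the factor $\lambda_1(M)/e^\alpha\geq 1$ explicitly), so there is nothing to add.
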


\begin{proof}
Using \eqref{lambda1} and Proposition \ref{first_facts} we infer
\begin{align*}
	  \int_M |\gradg v|_{g}^2\, \dvg
	\geq \frac{\lambda_1(M)}{e^\alpha} \int_M e^\alpha  v^2\,\dvg\geq \frac{\lambda_1(M)}{e^\alpha} \int_M e^{u_\alpha(r)} v^2\,\dvg \qquad \text{for any } v\in C^\infty_c(M)
\end{align*}
which gives the stability of $u_\alpha$ if $\alpha\leq \log(\lambda_1(M))$.
\end{proof}

The next step is to prove that the set $\mathcal S$ is an interval.
First, we state three preliminary lemmas; the fact that $\mathcal S$ is an interval will be proved in the fourth one (Lemma \ref{l6.7} below).

\begin{lemma}\label{l4.4}
Let $\psi$ satisfy \eqref{A1}-\eqref{A3} and let $a,\,b,\,R\in \mathbb{R}$ be such that $b>a$. If $u_\alpha(r)$ is the solution of \eqref{regular_ode_model} with $\alpha\in [a,b]$, then for any $r\in[0,R]$, the map $\alpha\mapsto u(\alpha,r):=u_\alpha(r)$ is differentiable in $ [a,b]$ and for any $\alpha_0\in [a,b]$
\begin{align}\label{l6.4_id}
	\lim_{\alpha\rightarrow\alpha_0}\,\sup_{r\in [0,R]} \bigg|\frac{\partial u}{\partial \alpha}(\alpha,r)-\frac{\partial u}{\partial \alpha}(\alpha_0,r)\bigg|=0.
\end{align}
Moreover for any $\alpha\in[a,b]$ and $r\in[0,R]$, the function $v_{\alpha}(r):=\frac{\partial u}{\partial \alpha}(\alpha,r)$ is the solution of the following problem
\begin{equation}\label{l6.4_use1}
	\begin{dcases}
		-v^{\prime\prime}(r)-(N-1)\frac{\psi^\prime(r)}{\psi(r)}v^\prime(r)=e^{u_{\alpha}(r)}v(r) &  \\
		v(0)=1  & \\
		v^\prime(0)=0. & \\
	\end{dcases}
\end{equation}
\end{lemma}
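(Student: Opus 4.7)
The plan is to bypass the singularity of the coefficient $\psi'(r)/\psi(r)\sim (N-1)/r$ at $r=0$ (which obstructs a direct application of the classical $C^1$-flow theorem) by recasting \eqref{regular_ode_model} as a Volterra integral equation and then running Gronwall-type arguments on it. Using \eqref{relation_model}, two integrations together with Fubini give
\begin{equation*}
   u_\alpha(r) = \alpha - \int_0^r K(r,s)\, e^{u_\alpha(s)} \ds \, , \qquad K(r,s):=\psi(s)^{N-1} \int_s^r \psi(t)^{-(N-1)} \dt \, .
\end{equation*}
Since $\psi(s)\sim s$ as $s\to 0^+$ by \eqref{A1}, one checks that $K$ is continuous on the triangle $\{0\le s\le r\le R\}$ and satisfies $|K(r,s)|\le C s$, so the apparent singularity at the origin is integrable away.

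A first step is continuous dependence: subtracting the integral equations for $u_\alpha$ and $u_{\alpha'}$ on $[0,R]$, using the Lipschitz bound for $e^u$ (made uniform thanks to the a priori estimate $u_\alpha\le \alpha\le b$ from Proposition \ref{first_facts}), and Gronwall's inequality yield $\sup_{[0,R]}|u_\alpha-u_{\alpha'}|\le C|\alpha-\alpha'|$.

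For each $\alpha\in[a,b]$ I would then define $v_\alpha$ as the unique $C([0,R])$ solution of the linear Volterra equation
\begin{equation*}
   v_\alpha(r) = 1 - \int_0^r K(r,s)\, e^{u_\alpha(s)} v_\alpha(s)\ds \, ,
\end{equation*}
obtained by Picard iteration; bootstrapping regularity shows $v_\alpha\in C^2([0,R])$ and that $v_\alpha$ solves \eqref{l6.4_use1} with $v_\alpha(0)=1$ and $v_\alpha'(0)=0$. To identify $v_\alpha$ with $\partial u/\partial\alpha$, set $w_h(r):=[u_{\alpha+h}(r)-u_\alpha(r)]/h$ and rewrite the difference of the two integral equations as
\begin{equation*}
   w_h(r) = 1 - \int_0^r K(r,s)\, \varphi_h(s) \, w_h(s)\ds, \qquad \varphi_h(s):=\int_0^1 e^{u_\alpha(s)+\tau(u_{\alpha+h}(s)-u_\alpha(s))}\, {\rm d}\tau \, .
\end{equation*}
By the previous step $\varphi_h\to e^{u_\alpha}$ uniformly on $[0,R]$; Gronwall gives a uniform bound on $\|w_h\|_\infty$, and subtracting the integral equation for $v_\alpha$ from that of $w_h$, followed by one more Gronwall argument, yields $\sup_{[0,R]}|w_h-v_\alpha|\le C\|\varphi_h-e^{u_\alpha}\|_\infty\to 0$.

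Finally, the uniform continuity statement \eqref{l6.4_id} follows from an analogous Gronwall estimate comparing the integral equations for $v_\alpha$ and $v_{\alpha_0}$, combined with the continuous dependence of $u_\alpha$ on $\alpha$ established in the first step. The main technical difficulty throughout is the singularity at the origin, and it is handled once and for all by passing to the integral formulation, after which the kernel $K(r,s)$ vanishes linearly in $s$ and all remaining estimates reduce to standard one-dimensional Volterra analysis.
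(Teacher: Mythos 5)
Your proof is correct and follows essentially the same route as the paper: both integrate the linearized equation against the weight $(\psi(r))^{N-1}$ to tame the singular coefficient at the origin and then close the argument with a Gronwall estimate (the paper works with $z=w'$ after one integration, while you package the second integration into the Volterra kernel $K(r,s)$, but the resulting estimates coincide). The only, immaterial, imprecision is the bound $|K(r,s)|\le Cs$, which for $N=2$ should read $O\bigl(s\log(R/s)\bigr)$; since only boundedness of the kernel is used in the Gronwall step, nothing changes.
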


\begin{proof} The proof can be obtained by proceeding along the lines of \cite[Lemma 5.6]{elvise-14} with suitable changes concerning essentially the fact that the power nonlinearity is replaced here by the exponential nonlinearity. For the sake of completeness, we recall the main steps here below.

For any $r\in [0,R]$ and $\alpha\in [a,b]$, let us define
	\begin{align*}
		w(r)=\frac{u_\alpha(r)-u_{\alpha_0}(r)}{\alpha-\alpha_0}-v_{\alpha_0}(r) \text{ and } z(r)=w^\prime(r)
	\end{align*}
where $v_{\alpha_0}$ is the solution of problem \eqref{l6.4_use1} with $\alpha=\alpha_0$.

Then
\begin{align}  \label{eq:z} z^{\prime}(r)+(N-1)\frac{\psi^\prime(r)}{\psi(r)}z(r)=-\bigg(\frac{e^{u_\alpha(r)}-e^{u_{\alpha_0}(r)}}{\alpha-\alpha_0}
  -e^{u_{\alpha_0}(r)}v_{\alpha_0}(r)\bigg) \, .
\end{align}
For any $\delta>0$, $\alpha\in (\alpha_0-\delta,\alpha_0+\delta)\cap [a,b]$ and $r\in [0,R]$, we have
\begin{align*}
	\bigg|\frac{e^{u_\alpha(r)}-e^{u_{\alpha_0}(r)}}{\alpha-\alpha_0}-e^{u_{\alpha_0}(r)}v_{\alpha_0}(r)\bigg|&\leq \bigg|\frac{e^{u_\alpha(r)}-e^{u_{\alpha_0}(r)}}{\alpha-\alpha_0}-\frac{u_\alpha(r)-u_{\alpha_0}(r)}{\alpha-\alpha_0} \, e^{u_{\alpha_0}(r)}+e^{u_{\alpha_0}(r)}w(r)\bigg|\\&\leq \frac{|u_\alpha(r)-u_{\alpha_0}(r)|}{|\alpha-\alpha_0|}|e^{\xi(r)}-e^{u_{\alpha_0}(r)}|+e^{u_{\alpha_0}(r)}|w(r)|
\end{align*}
where, by Lagrange Theorem, $\min\{u_\alpha(r),u_{\alpha_0}(r)\}<\xi(r)<\max \{u_\alpha(r),u_{\alpha_0}(r)\}$ any $r\in [0,R]$.
Recalling that for any $\alpha$ the functions $u_\alpha$ are decreasing and using again Lagrange Theorem, we obtain

\begin{align*}
	& \bigg|\frac{e^{u_\alpha(r)}-e^{u_{\alpha_0}(r)}}{\alpha-\alpha_0}-e^{u_{\alpha_0}(r)}v_{\alpha_0}(r)\bigg|\leq e^{\alpha_0+\delta} \, \frac{|u_\alpha(r)-u_{\alpha_0}(r)|^2}{|\alpha-\alpha_0|}+e^{\alpha_0}|w(r)| \\[10pt]
& \qquad \leq |w(r)| \left\{ e^{\alpha_0+\delta}\, |u_\alpha(r)-u_{\alpha_0}(r)|+e^{\alpha_0}\right\}+e^{\alpha_0+\delta} \, |v_{\alpha_0}(r)| \, |u_\alpha(r)-u_{\alpha_0}(r)|.
\end{align*}
Now, by continuous dependence, for any $\varepsilon>0$ we may choose $\delta$ small enough in such a way that $\sup_{r\in [0,R]}|u_\alpha(r)-u_{\alpha_0}(r)|<\varepsilon$ and we obtain for $r\in [0,R]$,
\begin{align*}
	\bigg|\frac{e^{u_\alpha(r)}-e^{u_{\alpha_0}(r)}}{\alpha-\alpha_0}-e^{u_{\alpha_0}(r)}v_{\alpha_0}(r)\bigg|
\leq (1+\varepsilon) e^{\alpha_0+\delta} |w(r)|+C\varepsilon
\end{align*}
where we put $C=e^{\alpha_0+\delta} \sup_{r\in [0,R]} |v_{\alpha_0}(r)|$.
Furthermore, observing that $w(0)=0$, we infer
\begin{align} \label{eq:increment}
	\bigg|\frac{e^{u_\alpha(r)}-e^{u_{\alpha_0}(r)}}{\alpha-\alpha_0}-e^{u_{\alpha_0}(r)}v_{\alpha_0}(r)\bigg|\leq (1+\varepsilon) e^{\alpha_0+\delta} \int_0^r|z(s)|{\rm d}s+C\varepsilon.
\end{align}
Combining \eqref{eq:z} and \eqref{eq:increment} and observing that $z(0)=0$, we obtain
\begin{align*}
	|z(r)|\leq K (1+\varepsilon) e^{\alpha_0+\delta} \int_0^r|z(s)|{\rm d}s+KC\varepsilon,
\end{align*}
for any $r\in[0,R]$ and $\alpha\in (\alpha_0-\delta,\alpha_0+\delta)\cap [a,b]$, where we put $K:=\sup_{r\in [0,R]}\frac{\int_{0}^r (\psi(s))^{N-1}{\rm d}s}{(\psi(r))^{N-1}}$.

Finally, exploiting standard Gronwall-type estimates we obtain
\begin{align*}
	\lim_{\alpha\rightarrow\alpha_0}\sup_{r\in [0,R]}|z(r)|=0 \space \implies \lim_{\alpha\rightarrow\alpha_0}\sup_{r\in [0,R]}|w(r)|=0.
\end{align*}
This proves the differentiability with respect to $\alpha$ of the map $\alpha\mapsto u(\alpha,r)$ and shows that the derivative with respect to $\alpha$ is a solution of \eqref{l6.4_use1}. The proof of \eqref{l6.4_id} is a consequence of a standard continuous dependence result for the Cauchy problem \eqref{l6.4_use1}.
\end{proof}

\begin{lemma} \label{l:5.7}
	Let	$\psi$ satisfy \eqref{A1}-\eqref{A3} and let $\alpha_1>\alpha_2\geq \alpha_3>\alpha_4$. Then the first intersection between $u_{\alpha_1}$ and $u_{\alpha_2}$ cannot take place after the first intersection between $u_{\alpha_3}$ and $u_{\alpha_4}$.
\end{lemma}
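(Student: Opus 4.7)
The plan is to argue by contradiction: assume that $R_{12}>R_{34}$, where $R_{12}$ and $R_{34}$ denote the first intersection points of $u_{\alpha_1},u_{\alpha_2}$ and of $u_{\alpha_3},u_{\alpha_4}$ respectively (with the convention $R=+\infty$ if no intersection occurs). For any pair $\gamma>\delta$ set $w_{\gamma\delta}:=u_\gamma-u_\delta$; subtracting the two equations in \eqref{regular_ode_model} and writing $e^{u_\gamma}-e^{u_\delta}=e^{\xi_{\gamma\delta}}\,w_{\gamma\delta}$ by the mean value theorem, with $\xi_{\gamma\delta}(r)$ lying strictly between $u_\delta(r)$ and $u_\gamma(r)$, one finds that $w_{\gamma\delta}$ solves the linear Sturm--Liouville equation in divergence form
\begin{equation*}
\bigl(\psi^{N-1}(r)\,w_{\gamma\delta}'(r)\bigr)'+\psi^{N-1}(r)\,e^{\xi_{\gamma\delta}(r)}\,w_{\gamma\delta}(r)=0,\qquad w_{\gamma\delta}(0)=\gamma-\delta>0,\ w_{\gamma\delta}'(0)=0.
\end{equation*}

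The key step is a Wronskian comparison. Given two such solutions $w_1,w_2$ with coefficients $e^{\xi_1(r)},\,e^{\xi_2(r)}$, define
\begin{equation*}
W(r):=\psi^{N-1}(r)\bigl[w_1'(r)\,w_2(r)-w_1(r)\,w_2'(r)\bigr].
\end{equation*}
Then $W(0)=0$ (because $\psi(0)=0$) and a direct computation gives $W'(r)=\psi^{N-1}(r)\,w_1(r)\,w_2(r)\,[e^{\xi_2(r)}-e^{\xi_1(r)}]$. If $\xi_1(r)>\xi_2(r)$ on an interval $(0,R_*)$ along which $w_1,w_2>0$, then $W<0$ on $(0,R_*]$. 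At the first zero $R_*$ of $w_2$, supposing $w_1(R_*)>0$, one has $W(R_*)=-\psi^{N-1}(R_*)\,w_1(R_*)\,w_2'(R_*)\ge 0$; the equality $w_2'(R_*)=0$ is ruled out by uniqueness for the linear Cauchy problem at $r=R_*>0$. This contradiction will show that the first zero of $w_1$ cannot lie strictly after that of $w_2$.

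To apply this machinery, note that under $R_{12}>R_{34}$ we have $u_{\alpha_1}>u_{\alpha_2}$ throughout $[0,R_{34}]$. If either $\alpha_2=\alpha_3$ (so $u_{\alpha_2}\equiv u_{\alpha_3}$ by uniqueness) or $u_{\alpha_2}>u_{\alpha_3}$ on all of $[0,R_{34}]$, then $\xi_{12}(r)>u_{\alpha_2}(r)\ge u_{\alpha_3}(r)>\xi_{34}(r)$ on $(0,R_{34})$, and the Wronskian comparison applied to $(w_{12},w_{34})$ directly forces $R_{12}\le R_{34}$, a contradiction. The principal difficulty lies in the remaining case: $\alpha_2>\alpha_3$ with $u_{\alpha_2}$ and $u_{\alpha_3}$ meeting for the first time at some $R^*\in(0,R_{34}]$, because beyond $R^*$ the sign of $\xi_{12}-\xi_{34}$ is no longer controlled. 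The idea is to bridge through the intermediate solution $u_{\alpha_2}$: apply the Wronskian argument instead to the pair $(w_{12},w_{23})$ with $w_{23}:=u_{\alpha_2}-u_{\alpha_3}$, which vanishes for the first time precisely at $R^*$. On $(0,R^*)$ one has $\xi_{12}(r)>u_{\alpha_2}(r)>\xi_{23}(r)$ strictly, and the same comparison yields $R_{12}\le R^*\le R_{34}$, again contradicting $R_{12}>R_{34}$ and closing the proof.
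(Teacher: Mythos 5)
Your proof is correct, and at its core it runs on the same engine as the paper's: each difference $w_{\gamma\delta}=u_\gamma-u_\delta$ solves a linear equation whose zeroth-order coefficient $e^{\xi_{\gamma\delta}}$ is squeezed between $e^{u_\delta}$ and $e^{u_\gamma}$ by the mean value theorem, and the resulting strict ordering of coefficients is exploited through a Sturm-type comparison. The paper implements that comparison by setting $z=w_1/w_2$ and showing $z''+a z'<0$ with $a=2w_2'/w_2+(N-1)\psi'/\psi$; multiplying by the integrating factor $(\psi(r))^{N-1}w_2^2$ turns this into exactly the statement that your Wronskian $W=(\psi(r))^{N-1}(w_1'w_2-w_1w_2')$ is decreasing, so the two devices coincide. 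Your divergence-form packaging is slightly cleaner at the origin ($W(0)=0$ follows from $\psi(0)=0$ and $w_i'(0)=0$, with no need for the paper's limit $z'(\varepsilon)\to 0$ using $a(r)\sim (N-1)/r$). The real organizational difference is downstream: the paper proves a three-solution lemma ($\zeta_{12}\le\zeta_{23}$ for $\alpha_1>\alpha_2>\alpha_3$) and chains it twice to get $\zeta_{12}\le\zeta_{23}\le\zeta_{34}$, whereas you argue by contradiction with a case split, comparing $w_{12}$ directly with $w_{34}$ whenever $u_{\alpha_2}\ge u_{\alpha_3}$ persists up to $R_{34}$, and falling back, when $u_{\alpha_2}$ and $u_{\alpha_3}$ first meet at some $R^*\le R_{34}$, to the pair $(w_{12},w_{23})$ on $(0,R^*)$ --- which is precisely the paper's three-solution lemma for the triple $(\alpha_1,\alpha_2,\alpha_3)$, supplemented by the case hypothesis $R^*\le R_{34}$. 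Both organizations are sound; the paper's chaining is more uniform (no case analysis and it records the extra information $\zeta_{12}\le\zeta_{23}\le\zeta_{34}$), while yours saves one application of the comparison and handles the degenerate case $\alpha_2=\alpha_3$ explicitly rather than implicitly.
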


\begin{proof} We follow closely the proof of \cite[Lemma 7.3]{bonforte-13}. We divide the proof into two steps.

{\bf Step 1.} We first prove the lemma when only three functions $u_{\alpha_1}, u_{\alpha_2}, u_{\alpha_3}$ with $\alpha_1>\alpha_2>\alpha_3$, are involved. In other words, we prove that the first intersection between $u_{\alpha_1}$ and $u_{\alpha_2}$ cannot take place after the first intersection between $u_{\alpha_2}$ and $u_{\alpha_3}$.

Let $w_1=u_{\alpha_1}-u_{\alpha_2}$ and $w_2=u_{\alpha_2}-u_{\alpha_3}$. Then $w_i(0)>0$ and $w_i^\prime(0)=0$ for $i=1,2$. Let $r_1>0$ be such that $w_i$ has no zero in $[0,r_1]$. Then for $i=1,2$, the functions $w_i$ satisfy
\begin{align*}
	w_i^{\prime\prime}(r)+(N-1)\frac{\psi^\prime(r)}{\psi(r)} \, w_i^\prime(r)+b_i(r)w_i(r)=0,
\end{align*}
where the functions $b_i$ are positive in $[0,r_1]$ and they satisfy
\begin{align*}
   & u_{\alpha_2}(r)<\log(b_1(r))<u_{\alpha_1}(r) \quad \text{and} \quad  u_{\alpha_3}(r)<\log(b_2(r))<u_{\alpha_2}(r)
   \qquad \text{for any } r\in [0,r_1] \, ,
\end{align*}
thanks to Lagrange Theorem.
In particular this gives $b_1(r)> b_2(r)$ in $[0,r_1]$. Putting $z=w_1/w_2$ we have that
\begin{align*}
   & w_2(r) z''(r)+\left[2w_2'(r)+(N-1)\frac{\psi'(r)}{\psi(r)} \, w_2(r)\right] z'(r)=-z(r) (b_1(r)-b_2(r))w_2(r) <0
\end{align*}
for any $r\in [0,r_1]$ and moreover $z(0)>0$ and $z'(0)=0$.
If we set $a(r):=2\frac{w_2'(r)}{w_2(r)}+(N-1)\frac{\psi'(r)}{\psi(r)}$, the above inequality can be written as $z''(r)+a(r)z'(r)<0$ in $[0,r_1]$, being $w_2>0$ in $[0,r_1]$. Then, for $\varepsilon >0$, multiplying both sides by $e^{\int_{\varepsilon}^r a(t)\,dt}$ and integrating in $[\varepsilon,r]$, we get
$$
    z'(r)\leq z'(\varepsilon)\, e^{-\int_{\varepsilon}^r a(t)\,dt} \quad \text{for all }r\in  (\varepsilon,r_1] \, .
$$
By \eqref{A1} and the fact that $w_2(0)>0$ and $w_2'(0)=0$ we deduce that $a(r)\sim \frac{N-1}{r}$ as $r \rightarrow 0^+$. Then, letting $\varepsilon \rightarrow 0^+$ in the above inequality and recalling that $z'(\varepsilon)\to z'(0)= 0$, we conclude that $z'\leq 0$ in $(0,r_1]$ so that $z$ is non increasing in the same interval and in particular $z(r)\leq z(0)$ for any $r\in (0,r_1]$. This completes the proof of Step 1. Indeed, if $\zeta_i$ is the first zero of $w_i$, $i=1,2$ and if we assume by contradiction that $\zeta_1>\zeta_2$ then we may apply the previous estimate for any $0< r_1<\zeta_2$ and obtain
\begin{equation*}
   w_1(r)\le z(0) w_2(r) \qquad \text{for any } r\in (0,\zeta_2) \, .
\end{equation*}
Then, letting $r\to \zeta_2^-$ we conclude that $w_1(\zeta_2)\le 0$ in contradiction with $\zeta_1>\zeta_2$.

{\bf Step 2.} We complete here the proof of the lemma. We denote by $\zeta_{ij}$ the first intersection between $u_{\alpha_i}$ and $u_{\alpha_j}$ with $i\le j$. We have to prove that $\zeta_{12}\le \zeta_{34}$. We apply Step 1 twice, first to prove that $\zeta_{12}\le \zeta_{23}$ and then to prove that $\zeta_{23}\le \zeta_{34}$. The combination of the two inequalities readily completes the proof of Step 2.
\end{proof}

\begin{lemma}{\cite[Lemma 5.8]{elvise-14}} \label{l:5.8}
Let	$\psi$ satisfy \eqref{A1}-\eqref{A3}. Then $\lim_{r\rightarrow0^+}\, \lambda_1(B_r)=+\infty$.
\end{lemma}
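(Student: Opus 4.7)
The plan is to compare, via the Rayleigh quotient \eqref{lambda1}, the first Dirichlet eigenvalue of $-\Delta_g$ on a small geodesic ball $B_r\subset M$ with its Euclidean counterpart on a ball of the same radius, and then to conclude using the familiar scaling $\lambda_1(B_r^{\mathrm{Eucl}})=\lambda_1(B_1^{\mathrm{Eucl}})/r^2\to+\infty$ as $r\to 0^+$. Concretely, I would first use \eqref{A1}, which gives $\psi(0)=0$ and $\psi'(0)=1$, to fix $\varepsilon\in(0,1)$ and choose $r_0>0$ such that $(1-\varepsilon)\rho\le\psi(\rho)\le(1+\varepsilon)\rho$ for every $\rho\in[0,r_0]$.

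Next, for any $r\in(0,r_0]$ and any $\varphi\in C_c^\infty(B_r)$, I would rewrite both integrals in the Rayleigh quotient in polar coordinates centred at the pole, using $dV_g=\psi^{N-1}(\rho)\,d\rho\,d\omega$ and $|\nabla_g\varphi|_g^2=(\partial_\rho\varphi)^2+\psi^{-2}(\rho)|\nabla_\omega\varphi|^2$. Identifying $\varphi$ with the corresponding function on the Euclidean ball of the same radius through the polar parametrisation, the volume estimate gives immediately $\int_{B_r}\varphi^2\,dV_g\le(1+\varepsilon)^{N-1}\int_{B_r^{\mathrm{Eucl}}}\varphi^2\,dx$. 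For the Dirichlet energy I would split the integrand into its radial piece $(\partial_\rho\varphi)^2\psi^{N-1}$, which is bounded below by $(1-\varepsilon)^{N-1}$ times its Euclidean analogue, and its spherical piece $|\nabla_\omega\varphi|^2\psi^{N-3}$, which requires separating the cases $N\ge 3$ (where $\psi^{N-3}\ge(1-\varepsilon)^{N-3}\rho^{N-3}$) and $N=2$ (where instead $\psi^{-1}\ge(1+\varepsilon)^{-1}\rho^{-1}$). In every dimension this yields $\int_{B_r}|\nabla_g\varphi|_g^2\,dV_g\ge C(\varepsilon,N)\int_{B_r^{\mathrm{Eucl}}}|\nabla\varphi|^2\,dx$ for some positive constant $C(\varepsilon,N)$.

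Combining the two bounds and passing to the infimum in the Rayleigh quotient gives
\begin{equation*}
\lambda_1(B_r)\ge\frac{C(\varepsilon,N)}{(1+\varepsilon)^{N-1}}\,\lambda_1(B_r^{\mathrm{Eucl}})=\frac{C(\varepsilon,N)\,\lambda_1(B_1^{\mathrm{Eucl}})}{(1+\varepsilon)^{N-1}\,r^2}\qquad\text{for all } r\in(0,r_0],
\end{equation*}
so letting $r\to 0^+$ yields the claim. I do not expect any serious obstacle: the argument is purely local near the pole, so \eqref{A2}--\eqref{A3} play no role, and the only bit of care required is the $N=2$ bookkeeping for the angular term, where the exponent $N-3$ becomes negative and the direction of the $\psi$-inequality must be flipped.
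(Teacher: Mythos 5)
Your argument is correct: the two-sided bound $(1-\varepsilon)\rho\le\psi(\rho)\le(1+\varepsilon)\rho$ near the pole, which follows from \eqref{A1}, does reduce the Rayleigh quotient on $B_r$ to the Euclidean one up to constants depending only on $\varepsilon$ and $N$, and the scaling $\lambda_1(B_r^{\mathrm{Eucl}})=\lambda_1(B_1^{\mathrm{Eucl}})/r^2$ then gives the blow-up; the only cosmetic point is that the transplanted test functions need only lie in $H^1_0(B_r^{\mathrm{Eucl}})$, which is harmless since the Euclidean infimum is the same over that class. The paper itself offers no proof, deferring entirely to \cite[Lemma 5.8]{elvise-14}, and the argument there is of the same local Euclidean-comparison type, so your proposal matches the intended reasoning.
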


By combining the above lemmas one gets:
\begin{lemma}\label{l6.7}
Let	$\psi$ satisfy \eqref{A1}-\eqref{A3} and assume $\alpha>\beta$. If $u_\beta$ is unstable then $u_\alpha$ is also the unstable solution. In particular, the set $\mathcal S$ defined in \eqref{eq:def-S} is an interval.
\end{lemma}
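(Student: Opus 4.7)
The plan is to translate stability of $u_\alpha$ into a sign condition on the Jacobi field $v_\alpha(r):=\frac{\partial u}{\partial \alpha}(\alpha,r)$ furnished by Lemma \ref{l4.4}, which solves the linearized problem \eqref{l6.4_use1}, and then to exploit the intersection-monotonicity of Lemma \ref{l:5.7}. The strategy splits into two claims: \textbf{(A)} a Picone-type characterization asserting that $u_\alpha$ is stable if and only if $v_\alpha>0$ on $(0,+\infty)$; and \textbf{(B)} the monotonicity $\Phi(\alpha_1)\le \Phi(\alpha_2)$ whenever $\alpha_1>\alpha_2$, where $\Phi(\alpha):=\inf\{r>0 : v_\alpha(r)=0\}\in (0,+\infty]$. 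Granting \textbf{(A)} and \textbf{(B)}, the lemma is immediate: if $u_\beta$ is unstable then $\Phi(\beta)<+\infty$, so for $\alpha>\beta$ we obtain $\Phi(\alpha)\le \Phi(\beta)<+\infty$, hence $u_\alpha$ is unstable; in particular $\mathcal S=\{\alpha : \Phi(\alpha)=+\infty\}$ is downward-closed and thus an interval.

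To establish \textbf{(A)} I would run a Picone-type computation. Since $v_\alpha(0)=1$, whenever $v_\alpha>0$ on $[0,+\infty)$ every radial $\varphi\in C^\infty_c(M)$ factors as $\varphi=v_\alpha\,\eta$ with $\eta$ smooth and compactly supported. Substituting into the radial Rayleigh quotient, integrating by parts once, and using that $v_\alpha$ solves \eqref{l6.4_use1} in divergence form produces the identity
\[
\int_0^{+\infty}\!\bigl[(\varphi'(r))^2-e^{u_\alpha(r)}\varphi(r)^2\bigr]\,(\psi(r))^{N-1}\,dr \,=\, \int_0^{+\infty}\!(v_\alpha(r))^2(\eta'(r))^2\,(\psi(r))^{N-1}\,dr \,\ge\, 0,
\]
so $u_\alpha$ is stable by the radial characterization recalled at the start of this section. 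Conversely, if $r_0:=\Phi(\alpha)\in(0,+\infty)$, then $v_\alpha$ is a positive eigenfunction of the linearized operator on $B_{r_0}$ with zero eigenvalue; since a positive eigenfunction forces its eigenvalue to be principal, this gives $\lambda_1(L_\alpha,B_{r_0})=0$, and strict domain-monotonicity then yields $\lambda_1(L_\alpha,B_r)<0$ for every $r>r_0$, producing a compactly supported witness of instability.

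For \textbf{(B)}, fix $\alpha_1>\alpha_2$ and pick $\varepsilon\in(0,\alpha_1-\alpha_2)$. The quadruple $(\alpha_1+\varepsilon,\alpha_1,\alpha_2+\varepsilon,\alpha_2)$ satisfies the hypothesis of Lemma \ref{l:5.7}, giving
\[
\zeta(\alpha_1+\varepsilon,\alpha_1)\,\le\,\zeta(\alpha_2+\varepsilon,\alpha_2),
\]
where $\zeta(\cdot,\cdot)$ denotes the first positive intersection point ($+\infty$ when none exists). Writing $\tfrac{u_{\gamma+\varepsilon}(r)-u_\gamma(r)}{\varepsilon}=\int_0^1 v_{\gamma+t\varepsilon}(r)\,dt$ and invoking the $C^1_{\mathrm{loc}}$-convergence $v_{\gamma+t\varepsilon}\to v_\gamma$ supplied by Lemma \ref{l4.4}, together with the simplicity of zeros of the linear second order ODE \eqref{l6.4_use1}, one verifies $\zeta(\gamma+\varepsilon,\gamma)\to \Phi(\gamma)$ as $\varepsilon\to 0^+$. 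Letting $\varepsilon\to 0^+$ in the displayed inequality produces \textbf{(B)}.

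The main technical obstacle is this last limit. Two subcases arise: when $\Phi(\gamma)=+\infty$, uniform positivity of $v_\gamma$ on arbitrary $[0,K]$ combined with Lemma \ref{l4.4} forces $\zeta(\gamma+\varepsilon,\gamma)>K$ for small $\varepsilon$, whence $\zeta(\gamma+\varepsilon,\gamma)\to+\infty$; when $\Phi(\gamma)=r_0<+\infty$, simplicity of $r_0$ as a zero of $v_\gamma$ (automatic from linearity of \eqref{l6.4_use1}) produces a local sign change of $u_{\gamma+\varepsilon}-u_\gamma$ near $r_0$, pinning the first intersection near $r_0$ as $\varepsilon\to 0^+$.
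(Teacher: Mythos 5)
Your proposal is correct, but it is organized quite differently from the paper's proof. The paper splits into two cases according to whether $u_\alpha$ and $u_\beta$ intersect: when they do not, stability of $u_\alpha$ would transfer directly to $u_\beta$ by comparing the quadratic forms (since $e^{u_\alpha}>e^{u_\beta}$), a contradiction; when they do intersect, the paper invokes Lemmas \ref{l4.4}, \ref{l:5.7} and \ref{l:5.8} and defers the remaining work to the proof of \cite[Lemma 5.9]{elvise-14}. You instead give a unified, self-contained argument with no case distinction, resting on the equivalence ``$u_\alpha$ stable $\iff$ $v_\alpha>0$ on $(0,+\infty)$'' (forward direction by the Picone substitution $\varphi=v_\alpha\eta$ in \eqref{radial_con}, reverse direction by identifying the first zero $r_0$ of $v_\alpha$ with the vanishing of the principal Dirichlet eigenvalue of the linearized operator on $B_{r_0}$ and using strict domain monotonicity), together with the monotonicity of the first zero $\Phi(\alpha)$ obtained from Lemma \ref{l:5.7} applied to the quadruple $(\alpha_1+\varepsilon,\alpha_1,\alpha_2+\varepsilon,\alpha_2)$ and the limit $\varepsilon\to0^+$ justified via \eqref{l6.4_id} and the simplicity of zeros of solutions of \eqref{l6.4_use1}. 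The underlying machinery (the Jacobi field of Lemma \ref{l4.4} and the intersection--ordering of Lemma \ref{l:5.7}) is the same as in the paper's reference, but your packaging buys a fully explicit proof that does not need Lemma \ref{l:5.8} (you replace it with domain monotonicity of the principal eigenvalue) nor the unique-continuation/minimizer argument used elsewhere in the paper, and it additionally delivers the clean characterization $\mathcal S=\{\alpha:\Phi(\alpha)=+\infty\}$, which the paper does not state. The facts you invoke without proof (positivity of an eigenfunction forces the principal eigenvalue, strict monotonicity of $\lambda_1$ under strict inclusion of geodesic balls, density of $C_c^\infty$ in $H_0^1$ for the quadratic form with the bounded potential $e^{u_\alpha}$) are standard and at the same level of granularity as the paper's own citations, so I do not regard them as gaps.
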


\begin{proof}
Suppose that $u_\alpha$ and $u_\beta$ have no intersections. Then $u_\alpha(r)>u_\beta(r)$ for any $r\geq 0$ and $e^{u_\alpha(r)}>e^{u_\beta(r)}$ for any $r\geq 0$. Suppose by contradiction that $u_\alpha$ is stable, then \eqref{radial_con} implies
\begin{align*}
	\int_{0}^{+\infty}(\chi^\prime(r))^2 \, (\psi(r))^{N-1}\dr\geq \int_{0}^{+\infty}e^{u_\alpha(r)}(\chi(r))^2 \, (\psi(r))^{N-1}\dr
	\\ \geq \int_{0}^{+\infty}e^{u_\beta(r)}(\chi(r))^2 \, (\psi(r))^{N-1}\dr,
\end{align*}
for every radial function $\chi\in C_c^\infty(M)$. This implies the stability of $u_\beta$, a contradiction.

We may suppose now that $u_\alpha$ and $u_\beta$ have at least one intersection. Exploiting Lemma \ref{l4.4}, Lemma \ref{l:5.7} and Lemma \ref{l:5.8}, we may follow the part of the proof of \cite[Lemma 5.9]{elvise-14} in which $u_\alpha$ and $u_\beta$ have at least one intersection, and prove the instability of $u_\alpha$.
\end{proof}

\subsection{Low dimensions}

In the next lemma we prove that if $2\le N\le 9$, the set $\mathcal S$ is bounded above. To this aim we first set
\begin{align} \label{eq:def-eta}
	\eta:=\sup \mathcal S\,.
\end{align}

\begin{lemma}\label{t6.2}
	Let $\psi$ satisfy \eqref{A1}-\eqref{A3} and let $2\leq N \leq 9$. Then there exists $\alpha_0>0$ such that for any $\alpha>\alpha_0$, the solution $u_\alpha$ of \eqref{regular_ode_model} is unstable. In particular, the number $\eta$ defined in \eqref{eq:def-eta} is finite.
\end{lemma}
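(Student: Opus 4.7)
The plan is to realise solutions $u_\alpha$ with very large $\alpha$ as small perturbations of the corresponding Euclidean problem via a zoom-in rescaling near the pole $o$, and then transfer to them the well known instability of Euclidean radial solutions in low dimensions (recalled in the Appendix). Once this is done, the conclusion $\eta<+\infty$ follows from the fact that $\mathcal S$ is an interval (Lemma \ref{l6.7}).

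Set $\lambda:=e^{-\alpha/2}$ and define
\[
v_\alpha(s):=u_\alpha(\lambda s)+2\log\lambda = u_\alpha(e^{-\alpha/2}s)-\alpha,\qquad s\ge 0.
\]
A direct computation from \eqref{regular_ode_model} shows that
\[
-v_\alpha''(s)-(N-1)\lambda\,\frac{\psi'(\lambda s)}{\psi(\lambda s)}\,v_\alpha'(s)=e^{v_\alpha(s)},\qquad v_\alpha(0)=0,\ v_\alpha'(0)=0.
\]
By \eqref{A1} one has $\psi(r)=r+o(r^2)$ and $\psi'(r)=1+o(r)$ as $r\to 0^+$, hence $\lambda\psi'(\lambda s)/\psi(\lambda s)\to 1/s$ on compact subsets of $(0,+\infty)$ and $\psi(\lambda s)/\lambda \to s$ on compact subsets of $[0,+\infty)$. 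Using the Volterra representation
\[
v_\alpha'(s)=-\frac{1}{(\psi(\lambda s)/\lambda)^{N-1}}\int_0^s \Big(\tfrac{\psi(\lambda\sigma)}{\lambda}\Big)^{N-1}e^{v_\alpha(\sigma)}\,d\sigma
\]
together with a Gronwall-type estimate, one obtains $v_\alpha\to v_0$ in $C^1_{\rm loc}([0,+\infty))$ as $\alpha\to+\infty$, where $v_0$ is the unique radial smooth solution of $-v_0''-(N-1)s^{-1}v_0'=e^{v_0}$ in $[0,+\infty)$ with $v_0(0)=0$, $v_0'(0)=0$.

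Next, the change of variable $r=\lambda s$, $\phi(s):=\chi(\lambda s)$, transforms the radial stability functional \eqref{radial_con} into
\[
\int_0^{+\infty}(\chi')^2\psi^{N-1}dr-\int_0^{+\infty}e^{u_\alpha}\chi^2\psi^{N-1}dr=\lambda^{N-2}\,\mathcal{Q}_\alpha(\phi),
\]
where
\[
\mathcal{Q}_\alpha(\phi):=\int_0^{+\infty}\!\big[(\phi'(s))^2-e^{v_\alpha(s)}\phi(s)^2\big]\Big(\tfrac{\psi(\lambda s)}{\lambda}\Big)^{N-1}ds,
\]
so that $u_\alpha$ is stable iff $\mathcal{Q}_\alpha(\phi)\ge 0$ for every radial $\phi\in C_c^\infty$. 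By the Appendix, for $2\le N\le 9$ the Euclidean radial solution $v_0$ is unstable, i.e.\ there exists a radial $\phi_\ast\in C_c^\infty((0,+\infty))$ with
\[
\int_0^{+\infty}\!\big[(\phi_\ast')^2-e^{v_0}\phi_\ast^2\big]s^{N-1}\,ds<0.
\]
Since $\phi_\ast$ has compact support in $(0,+\infty)$, the $C^1$-convergence $v_\alpha\to v_0$ and the uniform convergence $\psi(\lambda s)/\lambda\to s$ on $\operatorname{supp}(\phi_\ast)$ yield $\mathcal{Q}_\alpha(\phi_\ast)\to \int_0^{+\infty}[(\phi_\ast')^2-e^{v_0}\phi_\ast^2]s^{N-1}\,ds<0$ as $\alpha\to+\infty$. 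Therefore there is $\alpha_0>0$ such that $u_\alpha$ is unstable for every $\alpha>\alpha_0$; combined with Lemma \ref{l6.7} this gives $\mathcal S\subseteq(-\infty,\alpha_0]$, hence $\eta\le\alpha_0<+\infty$.

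The main technical obstacle is the $C^1_{\rm loc}$-convergence $v_\alpha\to v_0$ across the regular singular point $s=0$ of the drift $(N-1)\lambda\psi'(\lambda s)/\psi(\lambda s)$. It is overcome by the refined expansion $\lambda\psi'(\lambda s)/\psi(\lambda s)=\tfrac{1}{s}+O(\lambda^2 s)$ (which uses $\psi''(0)=0$ from \eqref{A1}) inserted into the Volterra identity above: the resulting estimate for $v_\alpha-v_0$ can be closed by Gronwall's lemma uniformly in $\lambda$ near $0$, and makes the passage to the limit in $\mathcal{Q}_\alpha(\phi_\ast)$ rigorous.
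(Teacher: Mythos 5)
Your proposal is correct and follows essentially the same route as the paper: a blow-up rescaling $v(s)=u_\alpha(\lambda s)+2\log\lambda$ with $\lambda^2 e^{\alpha}$ of order one, $C^1_{\rm loc}$ convergence to a radial solution of the Euclidean equation $-\Delta v=e^v$, and the transfer of Farina's instability result for $2\le N\le 9$ through the rescaled quadratic form, followed by Lemma \ref{l6.7} to conclude that $\mathcal S$ is bounded above. The only differences are cosmetic: you argue directly by pulling back a fixed compactly supported destabilizing test function for the Euclidean limit (which, as in the paper, tacitly uses the radial characterization of stability to guarantee that such a test function can be taken radial), whereas the paper argues by contradiction, assuming stability for all $\alpha$ and passing the stability inequality to the limit; and you obtain compactness via a Volterra--Gronwall estimate rather than the Lyapunov functional bound and Ascoli--Arzel\`a.
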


\begin{proof}
	We adapt to our framework the blow-up arguments of \cite[Lemmas 4.9 and 5.5]{elvise-14} and
	 \cite[Lemma 7.1]{bonforte-13}. Since we know from Lemma \ref{l6.7} that the set $\mathcal S$ is an interval, we proceed by contradiction assuming that  $u_\alpha$ is stable for any $\alpha \in \R$.

Let us define $u_\lambda$ as the solution of \eqref{regular_ode_model} with initial condition $\alpha=\log(\frac{e}{\lambda^2})$ and define
	\begin{align*}
		v_\lambda(s)=u_\lambda(\lambda s)+2\log(\lambda).
	\end{align*}
Now, one can check that $v_\lambda(0)=1$ and it satisfies
\begin{align} \label{eq:v-lambda}
	v_\lambda^{\prime\prime}(s)+\frac{N-1}{s}\frac{\psi^\prime(\lambda s)}{\psi(\lambda s)}\, \lambda s \, v'_\lambda(s)+e^{v_\lambda(s)}=0.
\end{align}
Furthermore, using the assumptions on $\psi$, we can find that for any fixed $S>0$,
\begin{align} \label{eq:l-s-psi}
	 \frac{\psi^\prime(\lambda s)}{\psi(\lambda s)}\,\lambda s \to 1 \qquad
      \text{as $\lambda \to 0^+$, uniformly in $(0,S]$. }
\end{align}
If we define $F_\lambda(r)=\frac 12 (u_\lambda'(r))^2+e^{u_\lambda(r)}$ then we know from the proof of Proposition \ref{first_facts} that $F_\lambda$ is decreasing and hence
\begin{align*}
   & |u_\lambda'(r)|^2= 2\Big[F_\lambda(r)-e^{u_\lambda(r)}\Big]\le 2\Big[e^{u_\lambda(0)}-e^{u_\lambda(r)}\Big]
     \le 2e^{u_\lambda(0)} \Big[u_\lambda(0)-u_\lambda(r)\Big]
\end{align*}
from which we obtain
\begin{align*}
   & |u_\lambda'(r)|\le \frac{\sqrt{2e}}{\lambda} \, \left(\int_{0}^{r} |u_\lambda'(t)| \, {\rm d}t \right)^{1/2}
   \qquad \text{for any } r>0
\end{align*}
where we recall that $e^{u_\lambda(0)}=\frac{e}{\lambda^2}$.

By Gronwall-type estimates we obtain
\begin{equation*}
  |u_\lambda'(r)|\le \frac{e}{\lambda^2} \, r \qquad \text{for any } r>0
\end{equation*}
and from the definition of $v_\lambda$ we finally obtain
\begin{equation} \label{eq:v-lambda-bis}
  |v_\lambda'(s)|\le e s \qquad \text{for any } s>0 \, .
\end{equation}
Combining \eqref{eq:v-lambda}, \eqref{eq:l-s-psi}, \eqref{eq:v-lambda-bis}
we deduce that for any fixed $S>0$ there exists $\bar \lambda(S)>0$ such that
$$
  |v_\lambda^{\prime\prime}(s)|\leq 2(N-1) e+e^{v_\lambda(0)}=(2N-1)e \quad \text{for any $s\in [0,S]$ and $0<\lambda<\bar \lambda(S)$} \, .
$$
Hence, using Ascoli-Arzel\'a Theorem on $[0,S]$, we deduce that there exists $\bar{v}\in C^1([0,S])$ such that $v_\lambda \rightarrow \bar{v}$ in $C^1([0,S])$ as $\lambda\rightarrow 0^+$ and $\bar{v}$ satisfies
\begin{align*}
	\bar{v}^{\prime\prime}(s)+\frac{N-1}{s} \, \bar{v}'(s)+e^{\bar{v}(s)}=0 \space \text{ and } \bar{v}(0)=1.
\end{align*}

Since $u_\lambda$ is a stable radial solution, using \eqref{radial_con} we have that
\begin{align*}
   \int_{0}^{+\infty}(\chi^\prime(r))^2 \, (\psi(r))^{N-1}\dr-\int_{0}^{+\infty}e^{u_\lambda(r)}(\chi(r))^2 \, (\psi(r))^{N-1}\dr\geq 0 \, ,
\end{align*}
for every radial function $\chi\in C_c^\infty(M)$. In terms of $v_\lambda$ the above inequality reads
\begin{align*}
	\int_{0}^{+\infty}(\chi^\prime(r))^2(\psi(r))^{N-1}\dr
    -\frac{1}{\lambda^2}\int_{0}^{+\infty}e^{v_\lambda(\frac{r}{\lambda})}(\chi(r))^2 \, (\psi(r))^{N-1}\dr\geq 0\,,
\end{align*}
for every radial function $\chi\in C_c^\infty(M)$. Now, choosing $\eta_\lambda(r):=\eta(\frac{r}{\lambda})$ as test function for some radial function $\eta\in C_c^\infty(M)$ and then using the change of variable $s=\frac{r}{\lambda}$ we infer
\begin{align*}
  \int_{0}^{+\infty}(\eta^\prime(s))^2(\psi(s\lambda))^{N-1}\ds
     -\int_{0}^{+\infty}e^{v_\lambda(s)}(\eta(s))^2 \, (\psi(s\lambda))^{N-1}\ds\geq 0,
\end{align*}
for every radial function $\eta\in C_c^\infty(M)$. Now fix $S> 0$ and then choose $\eta$ in such a way that supp$(\eta)\subset B_S$ where $B_S$ denotes the geodesic ball of radius $S$ centered at the pole $o$. By Lagrange Theorem for any $s\in [0,S],$ there exist $0<\zeta<s\lambda$ and $0<|t|<\frac{|\psi^{\prime\prime}(\zeta)|}{2}(s\lambda)$ such that for $\lambda\rightarrow 0^+$
\begin{align*}
	(\psi(s\lambda))^{N-1}=(s\lambda)^{N-1}+g(\zeta,t)(s\lambda)^{N},
\end{align*}
where $g(\zeta,t)=(N-1)(1+t)^{N-2} \, \frac{\psi^{\prime\prime}(\zeta)}{2}$. This gives, after cancelling  $\lambda^{N-1}$, that
\begin{align*}
&\int_{0}^{+\infty}(\eta^\prime(s))^2s^{N-1}\ds+\int_{0}^{+\infty}(\eta^\prime(s))^2 \, s^{N}g(\zeta,t)\lambda\ds\\&-\int_{0}^{+\infty}e^{v_\lambda(s)}(\eta(s))^2s^{N-1}\ds
-\int_{0}^{+\infty}e^{v_\lambda(s)}(\eta(s))^2 \, s^{N}g(\zeta,t)\lambda\ds\geq 0.
\end{align*}
Therefore taking the limit as $\lambda\rightarrow 0^+$, we finally obtain
\begin{align*}
\int_{0}^{+\infty}(\eta^\prime(s))^2s^{N-1}\ds	-\int_{0}^{+\infty}e^{\bar{v}(s)}(\eta(s))^2s^{N-1}\ds\geq 0,
\end{align*}
for every radial function $\eta\in C_c^\infty(M)$. Therefore $\bar{v}$ is a stable solution of the equation $-\Delta u=e^u$ on $\mathbb{R}^N$ for $2\leq N\leq 9$. This contradicts the results of \cite{farina2} and concludes the proof.
\end{proof}

We complete the stability picture for $2\leq N \leq 9$ by showing that $\mathcal{S}$ is a closed interval.
\begin{lemma}\label{l6.8}
	Let $\psi$ satisfy \eqref{A1}-\eqref{A3} and let $2\leq N \leq 9$, then $\mathcal{S}=(-\infty,\eta]$.
\end{lemma}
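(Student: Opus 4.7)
\begin{pf} (Plan) The proof reduces to three observations, only the last of which requires any work.

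First, by Lemma \ref{l6.3} every $\alpha \le \log(\lambda_1(M))$ lies in $\mathcal S$, so $\mathcal S$ is non-empty and unbounded below. By Lemma \ref{t6.2}, $\mathcal S$ is bounded above and $\eta = \sup \mathcal S \in \R$. By Lemma \ref{l6.7}, $\mathcal S$ is an interval. Combining these, $(-\infty,\eta) \subseteq \mathcal S \subseteq (-\infty,\eta]$, so the only point left to check is that $\eta$ itself belongs to $\mathcal S$, i.e. $u_\eta$ is stable.

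To close this last point, I would pick a sequence $\alpha_n \in \mathcal S$ with $\alpha_n \nearrow \eta$. For each $n$, stability of $u_{\alpha_n}$ combined with the radial characterisation \eqref{radial_con} gives
\begin{equation*}
\int_0^{+\infty} (\chi'(r))^2 (\psi(r))^{N-1}\, {\rm d}r \;\ge\; \int_0^{+\infty} e^{u_{\alpha_n}(r)} (\chi(r))^2 (\psi(r))^{N-1}\, {\rm d}r
\end{equation*}
for every radial $\chi \in C_c^\infty(M)$. The plan is to pass to the limit $n \to +\infty$ on both sides for a fixed arbitrary radial $\chi \in C_c^\infty(M)$, say with $\mathrm{supp}\,\chi \subset \overline{B_R}$. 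The left-hand side is independent of $n$, and on the right-hand side the integrand is supported in $[0,R]$; by Lemma \ref{l4.4} (applied on $[0,R]$ with any $[a,b]$ containing all $\alpha_n$ and $\eta$) the map $\alpha \mapsto u_\alpha$ is continuous in the sup norm on $[0,R]$, so $u_{\alpha_n} \to u_\eta$ uniformly on $[0,R]$ and hence $e^{u_{\alpha_n}} \to e^{u_\eta}$ uniformly on $[0,R]$. Dominated (or simply uniform) convergence then yields
\begin{equation*}
\int_0^{+\infty} (\chi'(r))^2 (\psi(r))^{N-1}\, {\rm d}r \;\ge\; \int_0^{+\infty} e^{u_{\eta}(r)} (\chi(r))^2 (\psi(r))^{N-1}\, {\rm d}r
\end{equation*}
for every radial $\chi \in C_c^\infty(M)$, which is exactly the radial stability condition \eqref{radial_con} for $u_\eta$. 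Therefore $\eta \in \mathcal S$, and $\mathcal S = (-\infty, \eta]$.

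The main (very mild) obstacle is the passage to the limit under the integral, and this is handled cleanly by the local continuous dependence on initial data from Lemma \ref{l4.4} together with the compact support of the radial test function $\chi$. No additional ingredient is needed.
\end{pf}
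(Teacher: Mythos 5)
Your proposal is correct and follows essentially the same route as the paper: the paper likewise reduces the claim to showing $\eta\in\mathcal S$ and closes it by taking stable solutions $u_{\eta_n}$ with $\eta_n\to\eta$, passing to the limit in the radial stability inequality \eqref{radial_con} via uniform convergence of $u_{\eta_n}$ to $u_\eta$ on the compact support of the test function (the paper phrases this as a contradiction, you phrase it directly, which is immaterial). Your preliminary bookkeeping with Lemmas \ref{l6.3}, \ref{t6.2} and \ref{l6.7} is also consistent with how the paper assembles the full statement of Theorem \ref{stability statement}.
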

\begin{proof}
	Suppose $\eta\notin \mathcal{S}$ and so $u_\eta$ is unstable solution. Then for each $n\in \mathbb{N}$ there exist $\eta_n\in \mathcal{S}$ such that $\eta_n\rightarrow\eta$ for $n\rightarrow +\infty$. The definition of $\eta_n$ gives that each $u_{\eta_n}$ is a stable solution. Hence for each radial function $\chi\in C_c^\infty(M)$ there holds
		\begin{align*}
		\int_{0}^{+\infty}(\chi^\prime(r))^2(\psi(r))^{N-1}\dr-\int_{0}^{+\infty}e^{u_{\eta_n}(r)}(\chi(r))^2(\psi(r))^{N-1}\dr\geq 0.
	\end{align*}
Now, for any function $\chi\in C_c^\infty(M)$, we have that supp($\chi$) is a compact set and continuous dependence on initial data gives $u_{\eta_n}\rightarrow u_\eta$ uniformly. This implies
	\begin{align*} \int_{0}^{+\infty}(\chi^\prime(r))^2(\psi(r))^{N-1}\dr-\int_{0}^{+\infty}e^{u_{\eta}(r)}(\chi(r))^2(\psi(r))^{N-1}\dr\geq 0.
\end{align*}
contradicting the fact that $u_\eta$ is an unstable solution.
\end{proof}

By Lemma \ref{l6.3} we know that $\eta \geq \log(\lambda_1(M))$; the next two lemmas allow us to improve these bounds when $\psi/\psi'$ is not integrable.
\begin{lemma}\label{big_lambda}
Let $\psi$ satisfy \eqref{A1}-\eqref{A3}. Then for any $\alpha$,
\begin{align*}
	\Lambda(M,\alpha):=\,\, \inf_{v\in H^1(M)\setminus\{0\}}\frac{\int_M |\gradg v|_g^2\, \dvg}{\int_M e^{u_\alpha}v^2\,\dvg}
\end{align*}
admits a minimizer.
\end{lemma}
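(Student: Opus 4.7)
The plan is to apply the direct method of the calculus of variations. Take a minimizing sequence $\{v_n\} \subset H^1(M)\setminus\{0\}$, normalized so that $\int_M e^{u_\alpha} v_n^2 \, dV_g = 1$ and $\int_M |\nabla_g v_n|_g^2 \, dV_g \to \Lambda(M,\alpha)$. Under \eqref{A1}--\eqref{A3} we have $\lambda_1(M)>0$ (see \cite[Lemma 4.1]{elvise-14}), and the associated Poincar\'e inequality yields $\|v_n\|_{L^2(M)}^2 \leq \lambda_1(M)^{-1} \int_M |\nabla_g v_n|_g^2\, dV_g$, so $\{v_n\}$ is bounded in $H^1(M)$. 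Extract a subsequence with $v_n \rightharpoonup v$ weakly in $H^1(M)$; weak lower semicontinuity of the Dirichlet integral then gives $\int_M |\nabla_g v|_g^2 \, dV_g \leq \Lambda(M,\alpha)$.

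The crucial step is to establish $\int_M e^{u_\alpha} v^2 \, dV_g = 1$, which combined with the preceding inequality would force $v$ to be a minimizer. I would reduce this to the compactness of the embedding $H^1(M) \hookrightarrow L^2(M, e^{u_\alpha}\, dV_g)$. To prove it, split the integral at a geodesic ball $B_R$ centered at the pole. Inside $B_R$ the classical Rellich--Kondrachov theorem furnishes, up to a subsequence, strong convergence $v_n \to v$ in $L^2(B_R, dV_g)$, and since $e^{u_\alpha}$ is continuous and therefore bounded on $\overline{B_R}$, the convergence passes to $L^2(B_R, e^{u_\alpha}\, dV_g)$.

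The main obstacle is the uniform control of the exterior tail $\int_{M\setminus B_R} e^{u_\alpha} v_n^2 \, dV_g$ as $R\to +\infty$, and Theorem \ref{final} suggests two different arguments according to the integrability of $\psi/\psi'$. If $\psi/\psi' \notin L^1(0,+\infty)$, then $u_\alpha(r) \to -\infty$, so $\sup_{r > R} e^{u_\alpha(r)} \to 0$; combining this with the uniform $L^2$-bound $\|v_n\|_{L^2} \leq C$ gives
$$\int_{M\setminus B_R} e^{u_\alpha} v_n^2 \, dV_g \leq \Big(\sup_{r > R} e^{u_\alpha(r)}\Big) \|v_n\|_{L^2(M)}^2 \to 0 \quad \text{uniformly in } n \, .$$
If instead $\psi/\psi' \in L^1(0,+\infty)$, then necessarily $\Lambda = +\infty$ (otherwise $\psi/\psi' \to 1/\Lambda > 0$ contradicts integrability); in this regime a standard Hardy-type inequality on the weighted line applied to radial test functions, together with Persson's characterization, yields $\lambda_1(M \setminus B_R) \to +\infty$ as $R \to +\infty$. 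Hence $\int_{M\setminus B_R} v_n^2 \, dV_g \leq \lambda_1(M\setminus B_R)^{-1} \|\nabla_g v_n\|_{L^2}^2 \to 0$ uniformly in $n$, and boundedness of $e^{u_\alpha}$ again controls the weighted tail. Combining local strong convergence with the uniform tail estimate gives $v_n \to v$ strongly in $L^2(M, e^{u_\alpha}\, dV_g)$, so $\int_M e^{u_\alpha} v^2 \, dV_g = 1$ (in particular $v \not\equiv 0$), and $v$ attains $\Lambda(M,\alpha)$.
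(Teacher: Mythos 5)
Your argument is correct and is essentially the proof the paper delegates to \cite[Lemma 5.11]{elvise-14}: a normalized minimizing sequence, the Poincar\'e inequality $\lambda_1(M)>0$ to get $H^1$-boundedness, weak convergence plus weak lower semicontinuity, local Rellich compactness on balls, and a uniform tail estimate that upgrades the weak convergence to strong convergence in $L^2(M,e^{u_\alpha}\,dV_g)$. The part you add that is genuinely non-routine relative to the cited lemma is the dichotomy for the tail: in \cite{elvise-14} the weight always decays at infinity, whereas here $e^{u_\alpha}$ stays bounded away from zero when $\psi/\psi'\in L^1(0,+\infty)$ (Theorem \ref{final}-$(i)$), and you correctly compensate by observing that in that regime $\Lambda=+\infty$, so $\psi'/\psi\to+\infty$ forces $\lambda_1(M\setminus B_R)\to+\infty$ and the unweighted $L^2$ tail is small; in the complementary case you use the decay of $e^{u_\alpha}$ exactly as the paper does in the proof of Lemma \ref{l:eta>}. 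One small imprecision: the inequality $\int_{M\setminus B_R} v_n^2\,dV_g\le \lambda_1(M\setminus B_R)^{-1}\|\nabla_g v_n\|_{L^2(M)}^2$ is not literally valid because $v_n$ does not vanish on $\partial B_R$ (the Dirichlet characterization of $\lambda_1(M\setminus B_R)$ requires test functions in $H^1_0$); the standard fix is to insert a cutoff $\chi_R$ supported in $M\setminus B_R$ with $\chi_R\equiv 1$ on $M\setminus B_{2R}$ and $|\nabla_g\chi_R|_g\le C/R$, which costs only an extra term $CR^{-2}\|v_n\|_{L^2}^2$ that is uniformly bounded, so the tail over $M\setminus B_{2R}$ still goes to zero uniformly in $n$. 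With that one-line repair the proof is complete.
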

\begin{proof}
	The proof is a straightforward adaptation of \cite[Lemma 5.11]{elvise-14} to this setting, therefore we omit it.
\end{proof}

\begin{lemma} \label{l:eta>}
Let $\psi$ satisfy \eqref{A1}-\eqref{A3} and assume that $\frac{\psi}{\psi^\prime}\notin L^1(0,+\infty)$. Then $\eta>\log(\lambda_1(M))$.
\end{lemma}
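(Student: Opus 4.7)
The plan is to set $\alpha_0 := \log \lambda_1(M)$ (so that $\alpha_0 \in \mathcal S$ and hence $\eta \ge \alpha_0$ by Lemma \ref{l6.3}), and rule out equality through a two-step argument: first establish a \emph{strict} stability gap $\Lambda(M, \alpha_0) > 1$; then use a compactness argument to extend stability to some $\alpha > \alpha_0$.

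For the strict stability gap, I would invoke Lemma \ref{big_lambda} to obtain a minimizer $w_0 \in H^1(M) \setminus \{0\}$ of $\Lambda(M, \alpha_0)$. By Proposition \ref{first_facts}, $u_{\alpha_0}$ is strictly decreasing, hence $e^{u_{\alpha_0}(r)} < e^{\alpha_0} = \lambda_1(M)$ for every $r > 0$; since $\{o\}$ has measure zero and $w_0 \not\equiv 0$, this strict pointwise inequality integrates to
\[
\int_M e^{u_{\alpha_0}} w_0^2 \, \dvg \;<\; \lambda_1(M) \int_M w_0^2 \, \dvg \;\le\; \int_M |\gradg w_0|_g^2 \, \dvg,
\]
where the last inequality extends the variational characterisation \eqref{lambda1} of $\lambda_1(M)$ from $C_c^\infty(M)$ to $H^1(M)$ by density (valid on any complete Riemannian model). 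Dividing by $\int_M e^{u_{\alpha_0}} w_0^2 \, \dvg$ yields $\Lambda(M, \alpha_0) > 1$.

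For the second step, I would argue by contradiction: assume $\eta = \alpha_0$, so that for every $\alpha > \alpha_0$ the solution $u_\alpha$ is unstable and $\Lambda(M, \alpha) < 1$. Lemma \ref{big_lambda} then furnishes minimizers $w_\alpha$ that we normalise by $\int_M w_\alpha^2 \, \dvg = 1$. The identity $\int_M |\gradg w_\alpha|_g^2 \, \dvg = \Lambda(M, \alpha) \int_M e^{u_\alpha} w_\alpha^2 \, \dvg$ together with $e^{u_\alpha} \le e^\alpha$ gives a uniform $H^1(M)$-bound on $w_\alpha$ for $\alpha$ in a compact neighbourhood of $\alpha_0$. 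Along any sequence $\alpha_n \downarrow \alpha_0$ we extract a weak limit $w_{\alpha_n} \rightharpoonup w$ in $H^1(M)$; then, invoking the same compactness underlying Lemma \ref{big_lambda}---which rests on the decay $e^{u_{\alpha_0}(r)} \to 0$ as $r \to +\infty$, itself a consequence of the standing hypothesis $\psi/\psi' \notin L^1(0,\infty)$ via Theorem \ref{final}$(ii)$---together with the uniform-on-compact-sets convergence $u_{\alpha_n} \to u_{\alpha_0}$ from Lemma \ref{l4.4}, one checks that $w \not\equiv 0$ and that $\int_M e^{u_{\alpha_n}} w_{\alpha_n}^2 \, \dvg \to \int_M e^{u_{\alpha_0}} w^2 \, \dvg$. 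Weak lower semicontinuity of the Dirichlet integral then gives $\Lambda(M, \alpha_0) \le 1$, contradicting the strict inequality proved in the first step.

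The hard part will be the compactness argument in the second step. Because $H^1(M) \hookrightarrow L^2(M)$ is not compact on a non-compact Riemannian model, one must leverage the decay at infinity of the weight $e^{u_{\alpha_0}}$---quantified by Theorem \ref{final}$(ii)$ precisely in the regime $\psi/\psi' \notin L^1$---to rule out loss of mass for $\{w_{\alpha_n}\}$ at infinity, obtain a non-trivial weak limit $w$, and pass to the limit in the weighted term. This is the only technical ingredient that genuinely uses the integrability hypothesis; the rest of the argument would otherwise apply verbatim.
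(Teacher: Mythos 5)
Your proposal is correct and follows essentially the same two-step strategy as the paper: first a strict gap $\Lambda(M,\log(\lambda_1(M)))>1$, then a compactness argument along a sequence $\alpha_k\downarrow \log(\lambda_1(M))$ that exploits the uniform decay of $e^{u_{\alpha_k}}$ at infinity (via the monotone Lyapunov functional, available precisely because $\psi/\psi'\notin L^1$ forces $u_{\alpha}\to-\infty$ and $u_\alpha'\to 0$) together with local uniform convergence $u_{\alpha_k}\to u_{\bar\alpha}$ and weak lower semicontinuity of the Dirichlet integral. The only deviations are minor: your derivation of the strict gap from the strict pointwise inequality $e^{u_{\bar\alpha}(r)}<\lambda_1(M)$ for $r>0$ is a valid and slightly more direct alternative to the paper's argument (which shows $\Lambda(M,\bar\alpha)=1$ would force $u_{\bar\alpha}$ to solve two eigenvalue equations and hence be constant), and your normalization $\int_M w_\alpha^2\,\dvg=1$ requires the short extra observation that $\int_M e^{u_{\alpha_k}}w_{\alpha_k}^2\,\dvg>\lambda_1(M)>0$ to rule out a trivial weak limit, a point the paper sidesteps by normalizing $\int_M e^{u_{\alpha_k}}w_k^2\,\dvg=1$ instead.
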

\begin{proof}
	 Set $\bar{\alpha}:=\log(\lambda_1(M))$ and let $\Lambda(M,\alpha)$ be as in the statement of Lemma \ref{big_lambda}, then $\Lambda(M,\bar{\alpha})>1$. Indeed, if $w\in H^1(M)$ is the minimizer of $\Lambda(M,\bar{\alpha})$, then
	\begin{align*}
		\Lambda(M,\bar{\alpha})=\frac{\int_M |\gradg w|_g^2\, \dvg}{\int_M e^{u_{\bar{\alpha}}}|w|^2\,\dvg}\geq \frac{\int_M |\gradg w|_g ^2\, \dvg}{e^{\bar{\alpha}}\int_M |w|^2\,\dvg}\geq 1.
		\end{align*}
Therefore, if $\Lambda(M,\bar{\alpha})=1$, $w$ solves
\begin{align*}
	-\Delta_{g}w=\lambda_1(M)w\,\, \text{ in }M \,\,\text{ and } -\Delta_{g}w=e^{u_{\bar{\alpha}}}w \,\, \text{ in }M.
\end{align*}
This implies that $u_{\bar{\alpha}}$ is constant, which is a contradiction. Hence, $\Lambda(M,\bar{\alpha})>1$.

Next we consider a decreasing sequence $\{\alpha_{k}\}$ such that $\alpha_{k} \rightarrow \bar{\alpha}$ as $k \rightarrow +\infty$. We will show that $\Lambda(M,\alpha_{k})>1$ definitively, hence $\eta>\log(\lambda_1(M))$ which is the thesis.

By contradiction, assume that there exists $K>0$ such that $\Lambda(M,\alpha_{k})\leq 1$ for $k>K$. For any $k$ let $w_k$ be a minimizer of $\Lambda(M,\alpha_{k})$ satisfying $\int_{M}e^{u_{\alpha_{k}}}w_k^2\, \dvg=1$. The sequence $\{w_k\}$ is bounded in $H^1(M)$ hence, up to a subsequence, $w_k \rightharpoonup w$ in $H^1(M)$ as $k \rightarrow +\infty$ and $w_k \to w$ strongly in $L^2(B_R)$ for any $R>0$, where we recall that $B_R$ denotes the geodesic ball of radius $R$ centered at the pole $o$.

For any $\alpha \in \R$ consider the Lyapunov functional \eqref{lyapunov_functional_model}: $F_\alpha(r)=\frac{1}{2}(u_\alpha^\prime(r))^2+e^{u_\alpha(r)}$ for $r>0$. From the proof of Theorem \ref{final}, the stated assumptions imply that $\lim_{r \rightarrow + \infty} u_{{\alpha}}(r)=-\infty$ and $\lim_{r \rightarrow + \infty} u'_{{\alpha}}(r)=0$. Therefore, for every $\varepsilon>0$ there exists $R_\varepsilon>0$ such that $F_{\bar{\alpha}}(R_\varepsilon)<\varepsilon$. Since for any $r>0$, $u_{\alpha_k}(r)\rightarrow u_{\bar{\alpha}}(r)$ and $u^\prime_{\alpha_k}(r)\rightarrow u^\prime_{\bar{\alpha}}(r)$ as $k \rightarrow +\infty$, then there exists $\bar K= \bar K(\varepsilon)>0$ such that $F_{\alpha_{k}}(R_\varepsilon)<\varepsilon$ for $k>\bar K$. Since the functions $F_{\alpha_k}$ are nonincreasing, it follows that $F_{\alpha_{k}}(r)<\varepsilon$ and $e^{u_{\alpha_k}(r)}<\varepsilon$ for any $r\geq R_\varepsilon$ and $k>\bar K$. Therefore,
\begin{align*}
	&\bigg|\int_M e^{u_{\alpha_{k}}}w_k^2 \, \dvg-\int_M e^{u_{\bar{\alpha}}}w^2 \, \dvg\bigg|\\&\leq 	\bigg|\int_{B_{R_\varepsilon}} e^{u_{\alpha_{k}}}w_k^2 \, \dvg-\int_{B_{R_\varepsilon}} e^{u_{\bar{\alpha}}}w_k^2 \, \dvg\bigg|+\bigg|\int_{B_{R_\varepsilon}} e^{u_{\bar{\alpha}}}w_k^2 \, \dvg-\int_{B_{R_\varepsilon}} e^{u_{\bar{\alpha}}}w^2 \, \dvg\bigg|\\&+\bigg|\int_{M\setminus {B_{R_\varepsilon}}} e^{u_{\alpha_{k}}}w_k^2 \, \dvg-\int_{M\setminus {B_{R_\varepsilon}}} e^{u_{\bar{\alpha}}}w^2 \, \dvg\bigg|\\&\leq \sup_{{B_{R_\varepsilon}}}|e^{u_{\alpha_k}}-e^{u_{\bar{\alpha}}}|\int_{B_{R_\varepsilon}}w_k^2 \, \dvg+\bigg|\int_{B_{R_\varepsilon}} e^{u_{\bar{\alpha}}}w_k^2 \, \dvg-\int_{B_{R_\varepsilon}} e^{u_{\bar{\alpha}}}w^2 \, \dvg\bigg|\\&+\varepsilon \int_{M\setminus {B_{R_\varepsilon}}} w_k^2 \, \dvg+\varepsilon \int_{M\setminus {B_{R_\varepsilon}}} w^2 \, \dvg\\&\leq \sup_{{B_{R_\varepsilon}}}|e^{u_{\alpha_k}}-e^{u_{\bar{\alpha}}}|\int_{B_{R_\varepsilon}} \!\! w_k^2 \, \dvg+o(1)+\frac{\varepsilon}{\lambda_1(M)}\bigg(\int_M|\nabla_g w_k|_g ^2\dvg+\liminf_{k \rightarrow + \infty}\int_M|\nabla_g w_k|_g ^2\dvg\bigg)\\&
	\leq C \varepsilon+\frac{\varepsilon}{\lambda_1(M)}\bigg(\Lambda(M,\alpha_{k})+\liminf_{k \rightarrow + \infty}\Lambda(M,\alpha_{k})\bigg)+o(1) \leq C \varepsilon+\frac{2 \varepsilon}{\lambda_1(M)}+o(1) \, .
\end{align*}
In the above estimate we used the following facts: $w_k\rightarrow w$ in $L^2(B_{R_\varepsilon})$ strongly, $e^{u_{\alpha_{k}}}\rightarrow e^{u_{\bar{\alpha}}}$ uniformly, the lower semicontinuity of the $H^1(M)$-norm and the inequality $\Lambda(M,\alpha_{k})\leq 1$. Letting $k\to +\infty$, since $\varepsilon$ was chosen arbitrarily, we conclude that
\begin{align*}
\lim_{k \rightarrow + \infty} \int_M e^{u_{\alpha_{k}}}w_k^2 \, \dvg=\int_M e^{u_{\bar{\alpha}}}w^2\dvg.
\end{align*}
Using again the lower semicontinuity of the $H^1(M)$-norm, we finally have
\begin{align*}
	1<\Lambda(M,\bar{\alpha})\leq \frac{\int_M |\gradg w|_g^2\, \dvg}{\int_M e^{u_{\bar{\alpha}}}|w|^2\,\dvg}\leq\liminf_{k \rightarrow + \infty} \frac{\int_M |\gradg w_k|_g^2\, \dvg}{\int_M e^{u_{\alpha_k}}|w_k|^2\,\dvg}=\liminf_{k \to +\infty} \Lambda(M,\alpha_{k}),
\end{align*}
a contradiction. This ensures that $\Lambda(M,\alpha_{k})>1$ definitively and concludes the proof.
\end{proof}

\subsection{High dimensions}

   We are going to state the key ingredients in the proofs of Theorem \ref{t:stability} and Theorem \ref{intersection}-$(iii)$. Inspired by  \cite{hasegawa} and \cite{joseph-73} (see the Appendix), given a radial regular solution $u$ of (2.2), we define the function
   \begin{equation} \label{eq:def-v}
     v(r)=u(r)+2\log(\psi(r))-\log[2(N-2] \, .
   \end{equation}
   Then $v$ solves the equation
   \begin{equation} \label{eq:v-psi}
     v''(r)+(N-1)\frac{\psi'(r)}{\psi(r)} \, v'(r)+2(N-2)\left[\frac{e^{v(r)}}{(\psi(r))^2}-\left(\frac{\psi'(r)}{\psi(r)}\right)^2\right]-\frac{2\psi''(r)}{\psi(r)}=0
   \end{equation}

   The linearized equation at a function $v=v(r)$ becomes
    \begin{equation*}
     \varphi''(r)+(N-1)\frac{\psi'(r)}{\psi(r)} \, \varphi'(r)+\frac{2(N-2)}{(\psi(r))^2} \, e^{v(r)} \, \varphi(r)=0 \, .
   \end{equation*}

   In particular, linearizing at $v(r)=2\log(\psi'(r))$, we obtain the equation
   \begin{equation} \label{eq:linearized}
     \varphi''(r)+(N-1)\frac{\psi'(r)}{\psi(r)} \, \varphi'(r)+2(N-2) \left(\frac{\psi'(r)}{\psi(r)}\right)^2 \, \varphi(r)=0 \, .
   \end{equation}
   Next we define the operator $\mathcal L$ as the left hand sight of \eqref{eq:linearized}, i.e.
   \begin{equation}\label{eq:def-L}
      \mathcal L\varphi(r)=\varphi''(r)+(N-1)\frac{\psi'(r)}{\psi(r)} \, \varphi'(r)+2(N-2) \left(\frac{\psi'(r)}{\psi(r)}\right)^2 \, \varphi(r)
   \end{equation}
and we consider the polynomial:
        \begin{equation}\label{eq:char}
     P(\lambda):=\lambda^2+(N-2)\lambda+2(N-2) \, .
   \end{equation}
It is readily seen that $P$ admits the negative root:
 \begin{equation} \label{eq:real-eig}
    \lambda_1:=\frac{-(N-2)-\sqrt{(N-2)(N-10)}}{2} \quad (N \ge 10) \, .
\end{equation}
Adapting in a non trivial way the proof of \cite[Lemma 3.1]{hasegawa} to our setting, we prove the following property of the operator $\mathcal L$:

   \begin{lemma} \label{l:comparison-principle} Suppose that $\psi$ satisfies \eqref{A1}, \eqref{A2} and \eqref{eq:A4}. Let $N \ge 10$ and let $0<R<+\infty$, then there exists no function $z\in C^2((0,R])$ such that

   \begin{itemize}
     \item[(i)] $\mathcal Lz>0$ in $(0,R)$;
     \item[(ii)] $z>0$ in $(0,R)$ and $z(R)=0$;
     \item[(iii)] $\psi(r) z'(r)=O(1)$ and $z(r)=o\left((\psi(r))^{-(N-2+\lambda_1)}\right)$ as $r \to 0^+$,
   \end{itemize}
   with $\lambda_1$ as in \eqref{eq:real-eig}.
   \end{lemma}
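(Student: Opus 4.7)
My plan is to argue by contradiction, comparing $z$ with the explicit positive supersolution $\varphi_0(r):=(\psi(r))^{\lambda_1}$ modeled on the ``Euler part'' of $\mathcal{L}$, and to extract a contradiction from a Wronskian-type identity.

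\textbf{Step 1 (a positive strict supersolution).} I would first verify that
\[
\mathcal{L}\varphi_0 = \psi^{\lambda_1-2}(\psi')^2\,P(\lambda_1) + \lambda_1\,\psi^{\lambda_1-1}\psi'' = \lambda_1\,\psi^{\lambda_1-1}\psi'',
\]
since $\lambda_1$ is a root of the polynomial $P$ in \eqref{eq:char}. Assumption \eqref{eq:A4} states that $\log\psi'$ is strictly convex on $[0,+\infty)$; combining this with $\psi'(0)=1$ and $\psi''(0)=0$ from \eqref{A1} forces $(\log\psi')'(r)>0$ for $r>0$, hence $\psi''(r)>0$ on $(0,+\infty)$. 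Since $\lambda_1<0$, this yields $\mathcal{L}\varphi_0<0$ on $(0,R]$, so $\varphi_0$ is a positive strict supersolution of $\mathcal{L}$.

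\textbf{Step 2 (Wronskian-type monotonicity).} I would next introduce
\[
W(r):=\psi^{N-1}(r)\bigl[z'(r)\varphi_0(r)-z(r)\varphi_0'(r)\bigr],\qquad r\in(0,R],
\]
which is designed so that the first-order term in $\mathcal{L}$ is absorbed into a total derivative. A routine computation yields
\[
W'(r)=\psi^{N-1}(r)\bigl[\varphi_0(r)\,\mathcal{L}z(r)-z(r)\,\mathcal{L}\varphi_0(r)\bigr].
\]
By hypotheses (i), (ii) and Step~1, all factors make the right-hand side strictly positive on $(0,R)$, so $W$ is strictly increasing on $(0,R]$.

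\textbf{Step 3 (boundary values of $W$).} Using $z(R)=0$, we get $W(R)=\psi^{N-1}(R)\varphi_0(R)z'(R)$; and since $z>0$ on $(0,R)$ with $z(R)=0$, one has $z'(R)\le 0$, so $W(R)\le 0$. For the limit at $0$, I would rewrite
\[
W(r)=\psi^{N-2+\lambda_1}(r)\bigl[\psi(r)z'(r)-\lambda_1\psi'(r)z(r)\bigr].
\]
A quick algebraic check shows $N-2+\lambda_1=\tfrac{(N-2)-\sqrt{(N-2)(N-10)}}{2}>0$ for every $N\ge10$, so $\psi^{N-2+\lambda_1}(r)\to 0$ as $r\to0^+$. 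Since $\psi(r)\sim r$ and $\psi'(r)\to 1$ near $0$, hypothesis (iii) gives on the one hand $\psi^{N-2+\lambda_1}(r)\cdot\psi(r)z'(r)=O\bigl(\psi^{N-2+\lambda_1}(r)\bigr)=o(1)$, and on the other hand $\psi^{N-2+\lambda_1}(r)\psi'(r)z(r)=\psi'(r)\cdot o(1)=o(1)$. Hence $\lim_{r\to 0^+}W(r)=0$.

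\textbf{Step 4 (conclusion).} Steps~2 and 3 combine to give $0\ge W(R)>\lim_{r\to 0^+}W(r)=0$, a contradiction; this rules out the existence of $z$.

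The main obstacle I anticipate is Step~1: that $\varphi_0$ truly is a strict supersolution relies on the sign $\psi''>0$ inside $(0,\infty)$, and this is the only place where the technical hypothesis \eqref{eq:A4} is used. The second delicate point is the fine-tuning of the decay assumption (iii): the exponent $N-2+\lambda_1$ is precisely the borderline at which both terms in the rewritten expression for $W(r)$ vanish at $r=0^+$, so a weaker decay would not suffice to force $W(0^+)=0$, and the whole argument would collapse.
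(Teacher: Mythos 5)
Your proposal is correct and follows essentially the same route as the paper: the same comparison function $(\psi(r))^{\lambda_1}$, the same use of \eqref{eq:A4} to get $\psi''>0$ and hence $\mathcal L(\psi^{\lambda_1})<0$, the same Wronskian monotonicity, and the same boundary analysis at $0^+$ and at $R$. Your derivation of $\psi''>0$ directly from the strict convexity of $\log\psi'$ together with $\psi''(0)=0$ is, if anything, a touch cleaner than the paper's detour through $\psi'''>0$.
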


   \begin{proof}
     Suppose by contradiction that there exists $z\in C^2((0,R])$ satisfying $(i)$-$(iii)$. Let us define
     \begin{equation*} 
        Z(r):=(\psi(r))^{\lambda_1} \qquad \text{for any } r>0
     \end{equation*}
     with $\lambda_1$ as in \eqref{eq:real-eig}.
     Differentiating we have that
     \begin{equation*}
        Z'(r)=\lambda_1 (\psi(r))^{\lambda_1-1} \, \psi'(r) \, , \qquad
        Z''(r)=\lambda_1(\lambda_1-1) (\psi(r))^{\lambda_1-2} \, (\psi'(r))^2+\lambda_1(\psi(r))^{\lambda_1-1} \, \psi''(r)
     \end{equation*}
     and hence
     \begin{align} \label{eq:LZ-neg}
        & \mathcal LZ(r)=\lambda_1(\psi(r))^{\lambda_1-1} \, \psi''(r)+(\psi(r))^{\lambda_1-2} (\psi'(r))^2 \,
       P(\lambda_1)\\[7pt]
       \notag & \qquad =\lambda_1(\psi(r))^{\lambda_1-1} \, \psi''(r)<0
     \end{align}
     since $\lambda_1$ is a negative root of the polynomial $P$ and by \eqref{A1}, \eqref{A2}, \eqref{eq:A4} we have
     \begin{equation}\label{eq:A4-rew}
         \frac{\psi'''(r)}{\psi'(r)}>\left(\frac{\psi''(r)}{\psi'(r)}\right)^2 \, , \quad
         \psi''(r)>0 \qquad \text{for any } r>0 \, .
     \end{equation}
        Indeed, the first inequality in \eqref{eq:A4-rew} is equivalent to \eqref{eq:A4} and from it we also have that $\psi'''>0$ which implies $\psi''$ increasing and, in turn, by (A1) we finally obtain $\psi''>0$ in $(0,+\infty)$.

      Now, combining $(i)$, $(ii)$ and \eqref{eq:LZ-neg}, we obtain for any $r\in (0,R)$
      \begin{align*}
         & 0<\mathcal (\mathcal Lz(r)) Z(r)-\mathcal (\mathcal LZ(r))z(r) \\[7pt]
         & \quad
         =\left(z''(r)+(N-1)\frac{\psi'(r)}{\psi(r)} \, z'(r)\right) Z(r)-\left(Z''(r)+(N-1)\frac{\psi'(r)}{\psi(r)} \, Z'(r) \right)z(r)
      \end{align*}
      which is equivalent to
      \begin{equation*}
        \left[(\psi(r))^{N-1} z'(r)Z(r)\right]'-\left[(\psi(r))^{N-1} Z'(r)z(r)\right]'>0 \, .
      \end{equation*}
      In particular, we have the function
      \begin{equation*}
          r \mapsto (\psi(r))^{N-1} \left[z'(r)Z(r)-Z'(r)z(r)\right]
      \end{equation*}
      is increasing in $(0,R)$ and this implies
      \begin{align*}
         & (\psi(R))^{N-1} z'(R)Z(R)>\lim_{r\to 0^+} (\psi(r))^{N-1} \left[z'(r)Z(r)-Z'(r)z(r)\right]  \\[7pt]
         & \quad =\lim_{r\to 0^+} \left[(\psi(r))^{N-2+\lambda_1} \, z'(r)\psi(r)-\lambda_1 (\psi(r))^{N-2+\lambda_1} z(r)\psi'(r)\right]=0
      \end{align*}
      thanks to $(ii)$, $(iii)$, (A1) and the fact that $N-2+\lambda_1>0$.

      We may conclude that $z'(R)>0$ in contradiction with $(ii)$.
   \end{proof}

Thanks to Lemma \ref{l:comparison-principle}, we now prove a uniform estimate for functions defined in \eqref{eq:def-v}.

   \begin{lemma} \label{l:estimate} Suppose that $\psi$ satisfies \eqref{A1}, \eqref{A2} and \eqref{eq:A4} and let $N\ge 10$. Then any function $v$ defined by \eqref{eq:def-v} satisfies
   \begin{equation*}
     v(r)<2\log(\psi'(r)) \qquad \text{for any } r>0 \, .
   \end{equation*}
   \end{lemma}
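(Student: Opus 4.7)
The plan is to argue by contradiction using $V(r):=2\log(\psi'(r))$ as a barrier, following the strategy behind Lemma \ref{l:comparison-principle}. The key observation is that $V$ satisfies $e^{V}=(\psi')^{2}$, so plugging $V$ into the left-hand side of \eqref{eq:v-psi} collapses the bracket $[e^{V}/\psi^{2}-(\psi'/\psi)^{2}]$ to zero and produces the residual
\[
 R(r):=V''(r)+(N-1)\tfrac{\psi'(r)}{\psi(r)}V'(r)-\tfrac{2\psi''(r)}{\psi(r)}
 =2\Big[\tfrac{\psi'''(r)}{\psi'(r)}-\big(\tfrac{\psi''(r)}{\psi'(r)}\big)^{2}\Big]+2(N-2)\tfrac{\psi''(r)}{\psi(r)},
\]
which is strictly positive on $(0,+\infty)$ by the reformulation \eqref{eq:A4-rew} of \eqref{eq:A4} (which in particular gives $\psi''>0$). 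So $V$ is a strict supersolution of \eqref{eq:v-psi}.

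Suppose by contradiction that $v(r_{0})\ge V(r_{0})$ for some $r_{0}>0$ and set $R:=\inf\{r>0 : v(r)\ge V(r)\}$. Since $v(r)\sim \alpha+2\log r-\log[2(N-2)]\to -\infty$ while $V(r)\to 2\log\psi'(0)=0$ as $r\to 0^{+}$ (using \eqref{A1} and $u\in C^{2}$ with $u(0)=\alpha$), we have $R>0$. By continuity the function $z:=V-v\in C^{2}((0,R])$ is strictly positive on $(0,R)$ and satisfies $z(R)=0$. Subtracting the identity for $v$ from the one defining $R(r)$ and applying the mean value theorem on $e^{V}-e^{v}=e^{\xi}z$ (with $v<\xi<V$ on $(0,R)$), one gets
\[
 z''+(N-1)\tfrac{\psi'}{\psi}z'+\tfrac{2(N-2)\,e^{\xi}}{\psi^{2}}\,z=R(r)>0\quad\text{on }(0,R).
\]
Now comes the main step: since $\xi<V$ on $(0,R)$ we have $e^{\xi}<e^{V}=(\psi')^{2}$, and because $z>0$ this pointwise inequality lets us replace the coefficient $2(N-2)e^{\xi}/\psi^{2}$ by the larger $2(N-2)(\psi'/\psi)^{2}$ while preserving the strict sign, yielding $\mathcal{L}z>R(r)>0$ on $(0,R)$ in the sense of \eqref{eq:def-L}. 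This is the place where one has to be careful: the room for upgrading the zeroth-order coefficient comes precisely from the fact that we are on the side $v<V$, so the sign of $z$ and the sign of $(\psi')^{2}-e^{\xi}$ match.

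It remains to verify the boundary behaviour at $0$ so as to invoke Lemma \ref{l:comparison-principle}. Using \eqref{A1} (so $\psi(r)=r+O(r^{3})$, $\psi'(r)=1+O(r^{2})$, $\psi''(r)=O(r)$) and the smoothness of $u$ with $u'(0)=0$, a direct Taylor expansion gives $z(r)=-2\log r+O(1)$ and $z'(r)=-2/r+O(r)$ as $r\to 0^{+}$, hence $\psi(r)z'(r)=O(1)$ and $z(r)=o\big((\psi(r))^{-(N-2+\lambda_{1})}\big)$ since $N-2+\lambda_{1}=\tfrac12\big(N-2-\sqrt{(N-2)(N-10)}\big)>0$ for $N\ge 10$. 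All the hypotheses of Lemma \ref{l:comparison-principle} are met by $z$ on $(0,R]$, contradicting its conclusion. Hence $v<V$ on $(0,+\infty)$, which is the stated estimate. The only delicate point beyond routine estimates is the passage described above that turns the exponential zeroth-order term into the $\mathcal{L}$-coefficient $2(N-2)(\psi'/\psi)^{2}$; everything else is a careful book-keeping of the asymptotics near the pole.
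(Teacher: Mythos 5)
Your proof is correct and follows essentially the same route as the paper: you use $V=2\log(\psi')$ as a strict supersolution of \eqref{eq:v-psi} (via \eqref{eq:A4-rew}), take the first touching point, upgrade the zeroth-order coefficient through the mean value theorem and $e^{\xi}<e^{V}=(\psi')^{2}$ to get $\mathcal Lz>0$, check the asymptotics $\psi z'=O(1)$ and $z=o\bigl((\psi)^{-(N-2+\lambda_1)}\bigr)$ at the pole, and invoke Lemma \ref{l:comparison-principle}. This matches the paper's argument step for step.
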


   \begin{proof}
      Let us define
      \begin{equation} \label{eq:def-V}
         V(r)=2\log(\psi'(r))  \qquad \text{for any } r>0 \, .
      \end{equation}
      If we define $W(r)=V(r)-v(r)$, the statement of the lemma is equivalent to say that $W(r)>0$ for any $r>0$.

      We first observe that by \eqref{eq:def-v}, \eqref{eq:def-V}, (A1) and the fact that $u$ is a radial smooth function in $M$, we have
      \begin{align} \label{eq:W-est}
         & W(r)=2\log(\psi'(r))-u(r)-2\log(\psi(r))+\log[2(N-2)]\sim -2\log(\psi(r)) \to +\infty
      \end{align}
      as $r\to 0^+$.

      Then, recalling the definition of $\lambda_1$ given in \eqref{eq:real-eig}, by \eqref{eq:W-est} and de l'H\^opital's  rule, we infer
      \begin{align*}
         & \lim_{r\to 0^+} (\psi(r))^{N-2+\lambda_1}\, W(r)=\lim_{r\to 0^+} \frac{-2\log(\psi(r))}{(\psi(r))^{-(N-2+\lambda_1)}}=\lim_{r \to 0^+} \, \frac{2}{N-2+\lambda_1} \, (\psi(r))^{N-2+\lambda_1}=0
      \end{align*}
      being $\psi(0)=0$ and $\psi\in C^0([0,+\infty))$ by (A1) and $N-2+\lambda_1>0$ as already observed in the proof of Lemma \ref{l:comparison-principle}. In particular, we have that
      \begin{equation} \label{eq:W=o}
         W(r)=o\left((\psi(r))^{-(N-2+\lambda_1)}\right)  \qquad \text{as } r\to 0^+ \, .
      \end{equation}

      Moreover, differentiating in \eqref{eq:W-est}, using (A1) and exploiting the fact that $u'(0)=0$, we also obtain
      \begin{align*}
         & W'(r)=\frac{2\psi''(r)}{\psi'(r)}-u'(r)-\frac{2\psi'(r)}{\psi(r)}\sim -\frac{2}{\psi(r)}
         \qquad \text{as } r\to 0^+
      \end{align*}
      so that in particular
      \begin{equation} \label{eq:psi-W'}
         \psi(r) W'(r)=O(1) \qquad \text{as } r\to 0^+ \, .
      \end{equation}

      By \eqref{eq:W-est} we know that $W(r)>0$ for any $r$ small enough. We have to prove that actually $W$ is positive in $(0,+\infty)$ and hence we may proceed by contradiction assuming that there exists $R>0$ such that
      \begin{equation} \label{eq:pos-W}
         W(r)>0  \qquad \text{for any } r\in (0,R) \qquad \text{and} \qquad W(R)=0 \, .
      \end{equation}

       We claim that $\mathcal L W>0$ in $(0,R)$ with $\mathcal L$ as in \eqref{eq:def-L}. First of all we observe that by \eqref{eq:A4-rew}, $V$ satisfies
      \begin{align} \label{eq:V-ineq}
         & V''(r)+(N-1)\frac{\psi'(r)}{\psi(r)} \, V'(r)+2(N-2)\left[\frac{e^{V(r)}}{(\psi(r))^2}-\left(\frac{\psi'(r)}{\psi(r)}\right)^2\right]-\frac{2\psi''(r)}{\psi(r)}  \\[7pt]
        & \notag =2\left[\frac{\psi'''(r)}{\psi'(r)}-\left(\frac{\psi''(r)}{\psi'(r)}\right)^2\right]+2(N-2)\frac{\psi''(r)}{\psi(r)}>0
         \qquad \text{for any } r>0 \, .
      \end{align}
      Subtracting \eqref{eq:v-psi} to \eqref{eq:V-ineq}, applying Lagrange Theorem to the exponential function and exploiting \eqref{eq:pos-W}, we infer

      \begin{align} \label{eq:ineq-LW}
         & 0<W''(r)+(N-1) \frac{\psi'(r)}{\psi(r)} \, W'(r)+\frac{2(n-2)}{(\psi(r))^2} \left[e^{V(r)}-e^{v(r)}\right] \\[7pt]
        \notag & \quad <W''(r)+(N-1) \frac{\psi'(r)}{\psi(r)} \, W'(r)+\frac{2(N-2)}{(\psi(r))^2} \, e^{V(r)} [V(r)-v(r)] \\[7pt]
        \notag & \quad =W''(r)+(N-1) \frac{\psi'(r)}{\psi(r)} \, W'(r)+2(N-2)\left(\frac{\psi'(r)}{\psi(r)}\right)^2 W(r)
         =\mathcal L W(r)  \,
      \end{align}
      for any $r\in (0,R)$, thus completing the proof of the claim.

      By \eqref{eq:W=o}, \eqref{eq:psi-W'}, \eqref{eq:pos-W}, \eqref{eq:ineq-LW}, we see that $W$ satisfies conditions $(i)$, $(ii)$, $(iii)$ of Lemma \ref{l:comparison-principle} and the same lemma states that this is impossible.

      We reached a contradiction and this means that $W>0$ in $(0,+\infty)$. The proof of the lemma now follows immediately from the definition of $V$ and $W$.
   \end{proof}

The estimate stated in Lemma \ref{l:estimate} allows us to prove that for $N\ge 10$ solutions of (2.4) are ordered in the following sense:

\begin{lemma} \label{l:ordering} Suppose that $\psi$ satisfies  \eqref{A1}, \eqref{A2} and \eqref{eq:A4} and let $N\ge 10$. Let $\alpha>\beta$ and let $u_\alpha$ and $u_\beta$ two solutions of \eqref{regular_ode_model} satisfying $u_\alpha(0)=\alpha$ and $u_\beta(0)=\beta$. Then $u_\alpha(r)>u_\beta(r)$ for any $r\ge 0$.
\end{lemma}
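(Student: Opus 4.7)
The plan is to derive the ordering $u_\alpha>u_\beta$ by a contradiction argument that combines the uniform pointwise estimate of Lemma \ref{l:estimate} with the comparison principle of Lemma \ref{l:comparison-principle}.

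Set $w(r):=u_\alpha(r)-u_\beta(r)$, so that $w\in C^2([0,+\infty))$ with $w(0)=\alpha-\beta>0$ and $w'(0)=0$. Suppose for contradiction that $w$ vanishes somewhere in $(0,+\infty)$, and let $R>0$ be the smallest such point; then $w>0$ on $[0,R)$ and $w(R)=0$. Subtracting the two equations of \eqref{regular_ode_model} satisfied by $u_\alpha$ and $u_\beta$, and applying Lagrange Theorem to the exponential, one gets
\[
-w''(r)-(N-1)\frac{\psi'(r)}{\psi(r)}\,w'(r)=c(r)\,w(r)\qquad\text{on }(0,R),
\]
with $c(r)=e^{\xi(r)}$ and $u_\beta(r)<\xi(r)<u_\alpha(r)$; in particular $0<c(r)<e^{u_\alpha(r)}$.

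Next, I would invoke Lemma \ref{l:estimate} applied to $u_\alpha$: by the definition \eqref{eq:def-v} of $v$, that lemma is equivalent to the sharp pointwise estimate
\[
e^{u_\alpha(r)}<2(N-2)\left(\frac{\psi'(r)}{\psi(r)}\right)^{2}\qquad\text{for every }r>0.
\]
Combined with $w>0$ on $(0,R)$, the ODE above upgrades to the strict differential inequality
\[
\mathcal L w(r)=w''(r)+(N-1)\frac{\psi'(r)}{\psi(r)}\,w'(r)+2(N-2)\left(\frac{\psi'(r)}{\psi(r)}\right)^{2}w(r)>0\qquad\text{on }(0,R),
\]
so that $w$ satisfies condition $(i)$ of Lemma \ref{l:comparison-principle}. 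Condition $(ii)$ holds by the choice of $R$. For $(iii)$, since $w\in C^2([0,+\infty))$ with $w(0)$ finite and $w'(0)=0$, and since $N-2+\lambda_1>0$ (as already used in the proof of Lemma \ref{l:comparison-principle}), the bounds $\psi(r)w'(r)=O(1)$ and $w(r)=o((\psi(r))^{-(N-2+\lambda_1)})$ as $r\to 0^+$ follow at once from $\psi(0)=0$. Lemma \ref{l:comparison-principle} then excludes the existence of such a $w$, producing the desired contradiction and forcing $w>0$ throughout $(0,+\infty)$.

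I expect the routine verification of the boundary behaviour of $w$ at the pole to present no real obstacle; the substantive point is the passage from the linear equation for $w$ to the strict $\mathcal L$-inequality, which is powered entirely by the sharp pointwise bound of Lemma \ref{l:estimate}. The main underlying subtlety, already absorbed into that lemma, is the restriction $N\ge 10$, through which the negative root $\lambda_1$ of the polynomial \eqref{eq:char} is real and the comparison function $(\psi)^{\lambda_1}$ exhibits the behaviour at the origin required by Lemma \ref{l:comparison-principle}.
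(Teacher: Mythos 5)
Your argument is correct and is essentially the paper's own proof: the authors also take the first hypothetical intersection point $R$, derive the strict inequality $\mathcal L w>0$ on $(0,R)$ from the Lagrange mean value theorem together with the bound of Lemma \ref{l:estimate} (which, as you note, is exactly $e^{u_\alpha}<2(N-2)(\psi'/\psi)^2$ once unwound from the $v$-variable), verify conditions $(ii)$ and $(iii)$ trivially from $w\in C^2([0,+\infty))$ and $N-2+\lambda_1>0$, and conclude by Lemma \ref{l:comparison-principle}. The only cosmetic difference is that the paper phrases the subtraction in terms of the transformed functions $v_\alpha,v_\beta$ solving \eqref{eq:v-psi}, whereas you subtract \eqref{regular_ode_model} directly; since the extra terms cancel, the two computations coincide.
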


\begin{proof} Since $\alpha>\beta$ we have that $u_\alpha(r)>u_\beta(r)$ for any $r\ge 0$ small enough. We proceed by contradiction assuming that there exists $R>0$ such that
\begin{equation} \label{eq:ineq-u-u}
    u_\alpha(r)>u_\beta(r) \quad \text{for any } r\in [0,R) \quad \text{and} \quad u_\alpha(R)=u_\beta(R) \, .
\end{equation}
Recalling \eqref{eq:def-v} we may define two functions $v_\alpha$ and $v_\beta$ corresponding to $u_\alpha$ and $u_\beta$ respectively.

By \eqref{eq:ineq-u-u} we obtain
\begin{equation} \label{eq:ineq-v-v}
    v_\alpha(r)>v_\beta(r) \quad \text{for any } r\in [0,R) \quad \text{and} \quad v_\alpha(R)=v_\beta(R) \, .
\end{equation}

By Lemma \ref{l:estimate} we know that
\begin{equation} \label{eq:est-v-v}
    v_\alpha(r)<2\log(\psi'(r)) \quad \text{and} \quad v_\beta(r)<2\log(\psi'(r)) \qquad
    \text{for any } r>0 \, .
\end{equation}

Let us define $w(r)=v_\alpha(r)-v_\beta(r)=u_\alpha(r)-u_\beta(r)$ for any $r\ge 0$. Since both $v_\alpha$ and $v_\beta$ solve \eqref{eq:v-psi}, by \eqref{eq:est-v-v} and Lagrange Theorem we have that
\begin{align} \label{eq:ineq-w}
   & 0=w''(r)+(N-1)\frac{\psi'(r)}{\psi(r)} \, w'(r)+\frac{2(N-2)}{(\psi(r))^2} \left[e^{v_\alpha(r)}-e^{v_\beta(r)} \right] \\[7pt]
  \notag & \quad <w''(r)+(N-1)\frac{\psi'(r)}{\psi(r)} \, w'(r)+\frac{2(N-2)}{(\psi(r))^2} \, e^{v_\alpha(r)}  \, [v_\alpha(r)-v_\beta(r)] \\[7pt]
  \notag  & \quad <w''(r)+(N-1)\frac{\psi'(r)}{\psi(r)} \, w'(r)+2(N-2) \left(\frac{\psi'(r)}{\psi(r)}\right)^2 \, w(r)
    =\mathcal Lw(r)
\end{align}
for any $r\in (0,R)$.

We observe that, by (A1), $w$ trivially satisfies the following two conditions
\begin{equation}\label{eq:asym-w}
   \psi(r) w'(r)=O(1)  \, , \quad w(r)=o\left((\psi(r))^{-(N-2+\lambda_1)}\right) \qquad \text{as } r\to 0^+ \, ,
\end{equation}
being $w=u_\alpha -u_\beta \in C^2([0,+\infty))$.

By \eqref{eq:ineq-v-v}, \eqref{eq:ineq-w} and \eqref{eq:asym-w}, we see that $w$ satisfies conditions $(i)$, $(ii)$ and $(iii)$ of Lemma \ref{l:comparison-principle} and the lemma itself says that this is impossible. We proved that $w>0$ in $[0,+\infty)$ and now the proof of the lemma follows from the definition of $w$.
\end{proof}

\section{Proofs of Theorems \ref{t:stab-inter}, \ref{stability statement}, \ref{t:stability} and \ref{intersection}.}

\subsection{Proof of Theorem \ref{t:stab-inter}.}
The proof of statements $(i)$ and $(ii)$ follow, respectively, from Lemma \ref{no intesect}  and Lemma \ref{cpoli} below.

\begin{lemma}\label{no intesect}
	Let $N\geq 2$ and let $\psi$ satisfy \eqref{A1}-\eqref{A2}. Let $\alpha$ and $\beta$ be two distinct real numbers and let $u_\alpha$ and $u_\beta$ be the corresponding solutions of \eqref{regular_ode_model}, i.e. $u_\alpha(0)=\alpha$ and $u_\beta(0)=\beta$. If $u_\alpha$ and $u_\beta$ are stable then they do not intersect.
\end{lemma}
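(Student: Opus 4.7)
The plan is to argue by contradiction: assuming $u_\alpha$ and $u_\beta$ intersect somewhere, I will use their first intersection to build a test function that violates the stability inequality \eqref{stable_def} for the larger of the two solutions.

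Without loss of generality, assume $\alpha>\beta$. Since $u_\alpha(0)>u_\beta(0)$, by continuity $u_\alpha>u_\beta$ in a right-neighbourhood of $0$; if the two solutions were to meet, there would exist a first intersection radius $r_0>0$ with $u_\alpha>u_\beta$ on $[0,r_0)$ and $u_\alpha(r_0)=u_\beta(r_0)$. Note that at this first crossing $u_\alpha'(r_0)\neq u_\beta'(r_0)$, since otherwise uniqueness for the Cauchy problem at the regular point $r_0$ would force $u_\alpha\equiv u_\beta$, contradicting $\alpha\neq\beta$. Let $B_{r_0}$ denote the geodesic ball of radius $r_0$ centered at the pole and set $w:=u_\alpha-u_\beta$. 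Then $w\in C^\infty(\overline{B_{r_0}})$, $w>0$ in $B_{r_0}$, $w=0$ on $\partial B_{r_0}$, and
\begin{equation*}
-\lapg w \,=\, e^{u_\alpha}-e^{u_\beta} \qquad \text{in } B_{r_0}.
\end{equation*}

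The key step will be to exploit strict convexity of the exponential: for every $r\in[0,r_0)$,
\begin{equation*}
e^{u_\alpha(r)}-e^{u_\beta(r)} \,=\, \int_{u_\beta(r)}^{u_\alpha(r)} e^t\, {\rm d}t \,<\, e^{u_\alpha(r)}\bigl(u_\alpha(r)-u_\beta(r)\bigr) \,=\, e^{u_\alpha(r)}\, w(r).
\end{equation*}
Multiplying the equation for $w$ by $w$ and integrating by parts on $B_{r_0}$ (the boundary contribution vanishes since $w|_{\partial B_{r_0}}=0$), I would then obtain
\begin{equation*}
\int_{B_{r_0}} |\gradg w|_g^2\, \dvg \,=\, \int_{B_{r_0}} (e^{u_\alpha}-e^{u_\beta})\, w\, \dvg \,<\, \int_{B_{r_0}} e^{u_\alpha}\, w^2\, \dvg,
\end{equation*}
i.e.\ the strict reverse of the stability inequality for $u_\alpha$ tested against $w$.

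The only delicate point is that $w$ is not a priori in $C^\infty_c(M)$. However, its extension by zero outside $B_{r_0}$ is Lipschitz with compact support in $M$, hence belongs to $H^1_0(B_{r_0})$, which is the $H^1$-closure of $C^\infty_c(B_{r_0})\subset C^\infty_c(M)$. Since $e^{u_\alpha}$ is bounded on $\overline{B_{r_0}}$ by Proposition \ref{first_facts}, the quadratic form appearing in \eqref{stable_def} is $H^1$-continuous on test functions supported in $\overline{B_{r_0}}$, so the strict inequality above transfers to any sufficiently close $C^\infty_c(B_{r_0})$ approximant of $w$ and contradicts the stability of $u_\alpha$. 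I expect this density step to be the main technical obstacle; it is worth remarking that the stability of $u_\beta$ is never actually used in this scheme, the stability of the larger solution alone being enough to forbid any intersection.
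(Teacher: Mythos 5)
Your argument is correct, and it takes a genuinely different route from the paper's. The paper does not work with $w=u_\alpha-u_\beta$ directly: it differentiates the solution with respect to the initial datum, obtaining from Lemma \ref{l4.4} the solution $v_\sigma$ of the linearized Cauchy problem at some intermediate $\sigma\in(\beta,\alpha)$; an intersection forces $v_\sigma$ to vanish at some radius $R$, so the zero-extension of $v_\sigma$ restricted to $B_R$ makes the stability quadratic form of $u_\sigma$ exactly zero, and since $u_\sigma$ is stable (this uses the monotonicity of stability in $\alpha$, Lemma \ref{l6.7}) that extension minimizes the quotient of Lemma \ref{big_lambda}, hence solves $-\Delta_g v=e^{u_\sigma}v$ on all of $M$ while vanishing outside $B_R$ --- impossible by unique continuation. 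Your route replaces all of this machinery by the strict convexity of $t\mapsto e^t$: the Lagrange estimate $e^{u_\alpha}-e^{u_\beta}<e^{u_\alpha}(u_\alpha-u_\beta)$ on the set where $u_\alpha>u_\beta$ (the same device the paper uses in Lemma \ref{cpoli} and in \eqref{eq:ineq-LW}, but with the \emph{larger} solution as the potential, which is what yields the strict sign) turns the first intersection into a strict violation of \eqref{stable_def} for $u_\alpha$. This is shorter and more self-contained: it needs neither Lemma \ref{l4.4}, nor Lemma \ref{l6.7}, nor the attainment of the infimum, nor unique continuation, and --- as you observe --- it only invokes the stability of the larger solution (by Lemma \ref{l6.7} that already implies stability of the smaller one, so your conclusion is not strictly stronger in content, but your proof is independent of that lemma). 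Two minor remarks: the observation $u_\alpha'(r_0)\neq u_\beta'(r_0)$ is never used; and the density step you flag as the main obstacle is routine, since $w$ is smooth on $\overline{B_{r_0}}$ and vanishes on $\partial B_{r_0}$, so its zero-extension lies in $H^1_0(B_{r_0})$ and the quadratic form is $H^1$-continuous on functions supported there because $e^{u_\alpha}$ is bounded on the compact ball.
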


\begin{proof}
	Without loss of generality we may assume $\alpha>\beta$; we establish that $u_\alpha(r)>u_\beta(r)$ for any $r>0$. If this is not true then there exists $r_0>0$ such that $u_\alpha(r_0)\leq u_\beta(r_0)$. Now using Lagrange Theorem and Lemma \ref{l4.4} we deduce that for some $\sigma\in (\beta,\alpha)$ we have
	\begin{align*}
		v_\sigma(r_0)=\frac{\partial u}{\partial \alpha}(\sigma, r_0)=\frac{u_\alpha(r_0)-u_\beta(r_0)}{\alpha-\beta}\leq 0
	\end{align*}
and $v_\sigma(0)=1$. Hence there exists $R\in(0,r_0]$ such that $v_\sigma(R)=0$. Then $v_\sigma$ satisfies
\begin{align*}
	-\Delta_{g}v_\sigma=e^{u_\sigma}v_\sigma \text{ in } B_R\quad  \text{ with } v_\sigma=0 \text{ on } \partial B_R.
\end{align*}
Multiplying this equation by $v_\sigma$ and integrating by parts we obtain
\begin{align*}
	\int_{B_R}|\gradg v_\sigma|_g^2\,\dvg-\int_{B_R}e^{u_\sigma} v_\sigma^2\,\dvg=0.
\end{align*}
Let $w_\sigma$ be the trivial extension of $v_\sigma$ to the whole $M$ in such a way that $w_\sigma\in H^1(M)$ and
\begin{align*}
	\int_{M}|\gradg w_\sigma|_g^2\,\dvg-\int_{M}e^{u_\sigma} w_\sigma^2\,\dvg=0.
\end{align*}
Since $u_\alpha$ is a stable solution and $\sigma<\alpha$, by Lemma \ref{l6.7} we deduce that $u_\sigma$ is a stable solution too. This implies that
\begin{align*}
\inf_{v\in H^1(M)\setminus\{0\}}\frac{\int_M |\gradg v|_g^2\, \dvg}{\int_M e^{u_\sigma}|v|^2\,\dvg}\geq 1,
\end{align*}
and $w_\sigma$ attains the infimum. In particular, $w_\sigma$ satisfies the following equation
\begin{align*}
	-\Delta_g w_\sigma = e^{u\sigma}w_\sigma \text{ in }M.
\end{align*}
Now by standard regularity theory $w_\sigma \in C^2(M)$ and satisfies the following equation
\begin{align*}
	-w_\sigma^{\prime\prime}(r)-(N-1)\frac{\psi^\prime(r)}{\psi(r)}w_\sigma^\prime(r)=e^{u_{\sigma}(r)}w_\sigma(r)	\text{ for any } r>0.
\end{align*}
Moreover, by construction $w_\sigma(r)=0$ for $r>R$ and so, by unique continuation, we conclude that $w_\sigma\equiv 0$ in $M$. This is a contradiction.
\end{proof}

\begin{lemma}\label{cpoli}
	Let $N\geq 2$ and let $\psi$ satisfy \eqref{A1}-\eqref{A3}. If $u\in C^2(M)$ is an unstable solution of \eqref{regular_ode_model}, then there exists no solution $v\in C^2(M)$ of \eqref{regular_ode_model} such that $v>u$ in M.
\end{lemma}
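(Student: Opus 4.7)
The plan is to argue by contradiction via a Picone-type inequality on $(M,g)$. Suppose $v\in C^2(M)$ solves \eqref{main_eq} with $v>u$ in $M$, and set $w:=v-u>0$. Subtracting the two copies of \eqref{main_eq} yields
\begin{equation*}
-\Delta_g w \,=\, e^v-e^u \,=\, e^u\bigl(e^{v-u}-1\bigr),
\end{equation*}
and the strict convexity of the exponential ($e^s-1>s$ for every $s>0$) gives
\begin{equation*}
-\Delta_g w \,>\, e^u\,w \quad\text{in } M,
\end{equation*}
so that $w$ is a smooth strictly positive supersolution of the linearized operator at $u$.

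The second step is to feed this supersolution into the stability quadratic form via Picone's identity. For any test function $\varphi\in C_c^\infty(M)$, the pointwise identity
\begin{equation*}
\Bigl|\gradg\varphi - \tfrac{\varphi}{w}\,\gradg w\Bigr|_g^2 \,\ge\, 0
\end{equation*}
expands to $|\gradg\varphi|_g^2 \,\ge\, \langle\gradg(\varphi^2/w),\gradg w\rangle_g$. Since $w>0$ is smooth in $M$ and $\varphi$ has compact support, $\varphi^2/w\in C_c^\infty(M)$, so integrating and applying the divergence theorem (with no boundary contributions) produces
\begin{equation*}
\int_M|\gradg\varphi|_g^2\,\dvg \,\ge\, \int_M\frac{\varphi^2}{w}\bigl(-\Delta_g w\bigr)\,\dvg \,\ge\, \int_M e^u\,\varphi^2\,\dvg.
\end{equation*}
Since $\varphi$ was arbitrary, this is exactly \eqref{stable_def} for $u$, contradicting its assumed instability.

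The only delicate point I anticipate is a clean justification of the integration by parts, but this is routine here because $\varphi$ is compactly supported (killing boundary terms), $w$ is smooth and strictly positive on all of $M$ including at the pole (where $u,v$ are $C^2$ by assumption), and $e^u$ is locally integrable. Note that the argument nowhere invokes radial symmetry of $u$ or $v$, which is precisely why it establishes the stronger non-radial assertion advertised in the text preceding the lemma.
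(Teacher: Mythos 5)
Your proposal is correct and follows essentially the same route as the paper: both set $w=v-u>0$, derive $-\Delta_g w\ge e^u w$ from convexity of the exponential, and test the stability form with $\varphi^2/w$; the paper phrases the final pointwise bound via Cauchy--Schwarz and Young rather than naming Picone's identity, but the computation is identical. (Only cosmetic remark: $\varphi^2/w$ is $C^2_c$ rather than $C^\infty_c$ since $w$ is only assumed $C^2$, which is of course enough for the integration by parts.)
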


\begin{proof}
Suppose that there exists a solution $v\in C^2(M)$ of \eqref{regular_ode_model} such that $v>u$. Let $w=v-u$, so $w>0$ in $M$. Using the Lagrange Theorem we have
\begin{align*}
	-\Delta_{g}w=e^v-e^u\geq e^uw \text{ in } M \, .
\end{align*}
Now taking $\varphi \in C^\infty_c(M)$ and multiplying in the above inequality by $\frac{\varphi^2}{w}$, integrating by parts and using Cauchy Schwarz and Young inequalities, we infer
\begin{align*}
	\int_M e^u \varphi^2 \dvg & \le -\int_M \left\langle \nabla_g w,\nabla_g \left(\frac{\varphi^2}{w}\right) \right\rangle_g\,  \dvg\\&=2\int_M \left\langle \nabla_gw, \frac{\varphi}{w} \nabla_g \varphi \right\rangle_g\dvg - \int_M \frac{\varphi^2}{w^2}|\nabla_g w|_g^2 \, \dvg \leq \int_M |\nabla_g \varphi |_g^2 \, \dvg.
\end{align*}
This implies that $u$ is a stable solution and gives a contradiction.
\end{proof}

\subsection{Proofs of Theorems \ref{stability statement} and \ref{t:stability}.}
The proofs of Theorem  \ref{stability statement} - $(i)$ and $(ii)$ follow by Lemma \ref{t6.2} and Lemma \ref{l6.8} which give that $\mathcal{S}=(-\infty,\eta]$. By Lemma \ref{l6.3} we know that $\eta \geq \log(\lambda_1(M))$ while Lemma \ref{l:eta>} allows improving this bound when $\psi/\psi'$ is not integrable. \par
The proof of Theorem \ref{t:stability} follows arguing by contradiction. Indeed, suppose that there exists a radial solution $u\in C^2(M)$ of \eqref{regular_ode_model} with $N\geq 10$ which is unstable. Let $v\in C^2(M)$ be a radial solution of  \eqref{regular_ode_model} such that $v(0)>u(0)$. Then, by Lemma \ref{l:ordering} we have that $v>u$ on $M$, in contradiction with Lemma \ref{cpoli}.

\subsection{Proof of Theorem \ref{intersection}.}
The proofs of Theorem \ref{intersection} -$(i)$ and $(iii)$ follow by combining Theorem \ref{stability statement}-$(i)$ and Theorem \ref{t:stability} with Lemma \ref{no intesect} . Instead, the proof of Theorem \ref{intersection}-$(ii)$ follows by combining Theorem \ref{stability statement}-$(ii)$ with Lemma \ref{cpoli}.

%
%

	\section*{Appendix: some (well) known facts in the Euclidean case}
    Consider the equation
    \begin{equation} \label{eq:Gelfand}
       -\Delta u= e^u  \qquad \text{in } \R^N
    \end{equation}
    and let $u$ be a radial regular solution of \eqref{eq:Gelfand}. Letting $\alpha=u(0)$, then $u$ solves \eqref{regular_ode_model} with $\psi(r)=r$. Following \cite{joseph-73}, we consider the function $v(r)=u(r)+2\log r-\log[2(N-2)]$ (i.e., \eqref{eq:def-v} with $\psi(r)=r$) which satisfies the equation
    \begin{equation*}
      -v''(r)-\frac{N-1}{r} \, v'(r)=\frac{2(N-2)}{r^2} [e^{v(r)}-1] \quad (r>0) \, .
    \end{equation*}
   Now, let $w(t)=v(e^t)$ so that $w$ solves the autonomous equation
   \begin{equation} \label{eq:auto}
     w''(t)+(N-2)w'(t)+2(N-2) [e^{w(t)}-1]=0  \quad (t\in \R) \, .
   \end{equation}
  Following \cite{joseph-73} one can reduce equation \eqref{eq:auto} into an \emph{autonomous} system in the plane $(y,z)$ admitting the unique stationary point $(0,0)$ where we put $z(t)=w(t)$ and $y(t)=w'(t)$. Clearly, the system is given by
   \begin{equation}\label{eq:system}
    \begin{cases}
     y'(t)=-(N-2)y(t)-2(N-2)[e^{z(t)}-1] \, , \\[5pt]
     z'(t)=y(t) \, .
    \end{cases}
   \end{equation}
   The behavior and, in turn, the stability of radial solutions to \eqref{eq:Gelfand} depend on the nature of the stationary point $(0,0)$ of \eqref{eq:system} and, in particular, after linearization at $(0,0)$, on the nature of the eigenvalues of the matrix
    \begin{equation} \label{eq:matrix}
      \begin{pmatrix}
        -(N-2) & -2(N-2) \\
        1      & 0
      \end{pmatrix} \, .
    \end{equation}
   We observe that the characteristic polynomial of the matrix \eqref{eq:matrix} is exactly the polynomial $P$ given in \eqref{eq:char}. For $3\le N \le 9$ it admits two complex conjugate eigenvalues while for $N \ge 10$ it admits two negative eigenvalues which are coinciding if $N=10$ and distinct if $N \ge 11$:
$$
    \lambda_1=\frac{-(N-2)-\sqrt{(N-2)(N-10)}}{2} \, , \ \ \  \lambda_2=\frac{-(N-2)+\sqrt{(N-2)(N-10)}}{2} \quad (N \ge 10) \, .
$$
The above computations highlight a change in the nature of solutions when passing from low ($N\leq 9$) to high dimensions ($N\geq 10$). Starting from this observation, see e.g., \cite[Theorem 1.1]{tello-06}, it can be proved that radial smooth solutions intersect themselves infinitely many times if $3\leq N\leq 9$ and do not intersect if $N\geq 10$. Furthermore, for $N \le 9$ all regular solutions of \eqref{eq:Gelfand} are unstable while for $N\ge 10$ radial regular solutions are stable, see \cite{farina2} and reference therein.

\bigskip

 \textbf{Acknowledgments.} The first two authors are members of the Gruppo Nazionale per l'Analisi Matematica, la Probabilit\`a e le loro Applicazioni (GNAMPA) of the Instituto Nazionale di Alta Matematica (INdAM) and are partially supported by the PRIN project 201758MTR2: ``Direct and inverse problems for partial differential equations: theoretical aspects and applications'' (Italy).
 The first two authors acknowledge partial financial support from the INdAM - GNAMPA project 2022 ``Modelli del 4 ordine per la dinamica di strutture ingegneristiche: aspetti analitici e applicazioni''.

The second author acknowledges partial financial support from the research project ``Metodi e modelli per la matematica e le sue applicazioni alle scienze, alla tecnologia e alla formazione'' Progetto di Ateneo 2019 of the University of Piemonte Orientale ``Amedeo Avogadro''.
  The third author is partially supported by the INSPIRE faculty fellowship (IFA17-MA98). The fourth author is supported in part by
 National Theoretical Science Research Center Operational Plan V-Mathematics Field (2/5) (Project number 111-2124-M-002-014-G58-01).
 
 The authors are grateful to the anonymous referee for bringing to their attention two articles containing interesting applications to fluid dynamics of equation \eqref{main_eq} and related semilinear equations.
	

\end{document}